\newtheorem{thm}{Theorem}[section]
\newtheorem{lem}[thm]{Lemma}
\newtheorem{prop}[thm]{Proposition}
\newtheorem{cor}[thm]{Corollary}
\newtheorem{de}[thm]{Definition}
\newtheorem{rem}[thm]{Remark}
\newtheorem{note}[thm]{Notation}
\newcommand{\bz}{{\mathbb{Z}}}
\newcommand{\Tet}{{\operatorname{Tet}}}
\newcommand{\TV}{{\operatorname{TV}}}
\newcommand{\Trace}{{\operatorname{Trace}}}
\newcommand{\rank}{{\operatorname{Rank}}}
\newcommand{\vs}{{V(S)}}
\newcommand{\vsjn}{{V(S_1(j_1,\cdots,j_n))}}
\newcommand{\vsim}{{V(S_0(i_1,\cdots,i_m))}}
\newcommand{\vsn}{{V(S_1)}}
\newcommand{\vsm}{{V(S_0)}}
\newcommand{\tnm}{{T^n_m}}
\newcommand{\ztnm}{{Z(T^n_m)}}
\newcommand{\tnmij}{{Z(T^{n,(j_1,...,j_n)}_{m,(i_1,...,i_m)})}}
\newcommand{\sti}{(S^2\times I,0)}
\newcommand{\stl}{S^2\times\{1\}}
\newcommand{\sto}{S^2\times\{0\}}
\newcommand{\sts}{S^2\times S^1}
\newcommand{\stp}{S^2\times\{p\}}
\newcommand{\stpo}{S^2\times\{p_0\}}
\newcommand{\stpl}{S^2\times\{p_1\}}
\begin{document}

\title[The Turaev-Viro Endomorphism and the Jones Polynomial]{On the Turaev-Viro Endomorphism, and the colored Jones polynomial}

\author{Xuanting Cai}
\address{Mathematics Department,
Louisiana State University,
Baton Rouge, Louisiana}
\email{xcai1@math.lsu.edu}
\urladdr{\url{www.math.lsu.edu/~xcai1}}

\author[Patrick Gilmer]{Patrick M. Gilmer}
\address{Mathematics Department,
Louisiana State University,
Baton Rouge, Louisiana}
\email{gilmer@math.lsu.edu}
\urladdr{\url{www.math.lsu.edu/~gilmer}}

\thanks { {\it 2010 Mathematics Subject Classification} 57M25, 57M27, 57R56.}
\keywords{TQFT, quantum invariant, surgery presentation, strong shift equivalence, 3-manifold, knot}
\date{September 9, 2012}

\begin{abstract}
By applying a variant of the TQFT constructed by Blanchet, Habegger, Masbaum, and Vogel, and using a construction of Ohtsuki,
we define a module endomorphism for each knot $K$ by using a tangle obtained from a surgery presentation of $K$.
We show that it is strong shift equivalent  to the Turaev-Viro endomorphism associated to $K$.  Following Viro, we consider
the endomorphisms that one obtains after  coloring the meridian and longitude of the knot. 
We show that the traces of these endomorphisms encode the same information as the colored Jones polynomials of $K$ at a root of unity.
Most of the discussion is carried out in the more general setting of infinite cyclic covers of 3-manifolds.
\end{abstract}

\maketitle
\setcounter{tocdepth}{2}
\tableofcontents

\section{Introduction}
\label{introduction}

\subsection{History}
Walker first noticed \cite{Walker2} that the endomorphism induced in a $2+1$-TQFT (defined over a field) by the exterior of a closed off Seifert surface of a knot in zero-framed surgery along the knot can be used to give lower bounds for the genus of the knot. He did this by showing  the number of non-zero eigenvalues of this  endomorphism  counted with multiplicity is an invariant \cite{Walker2}, i.e. it does not depend on the choice of the Seifert surface. 
Thus the number of such eigenvalues must be less than or equal to the dimension of the vector space that the TQFT assigns to a closed surface of this minimal genus.

 Next Turaev and Viro \cite{TV}, again assuming the TQFT is defined over a field, saw that the similarity class of the induced map on the vector space associated to a Seifert surface modulo the generalized $0$-eigenspace was a stronger  invariant.  If the TQFT is defined over a more general commutative ring, the second author observed that the strong shift equivalence class of the   endomorphism is an invariant of the knot \cite{G3}.   Strong shift equivalence (abbreviated SSE) is a notion from symbolic dynamics which we will discuss in \S \ref{sse}  below. 
For a TQFT defined over a field $F$, the similarity class considered by Turaev-Viro is a complete invariant of SSE.
 In this case, the vector space  modulo the generalized $0$-eigenspace together with the induced automorphism, considered as a module over $F[t,t^{-1}]$, is called 
 the Turaev-Viro module. It should be considered as somewhat analogous to the Alexander module. The  order of  the Turaev-Viro module is called the Turaev-Viro polynomial and  lies in  $F[t,t^{-1}]$.  We will refer to the endomorphisms constructed as above (and those in the same SSE class) as   Turaev-Viro endomorphisms.

In  \cite{G1,G2}, Turaev-Viro endomorphisms were studied 
and  methods for computing the endomorphism explicitly were given. These methods adapted Rolfsen's surgery technique of studying infinite cyclic covers of knots.   
This method requires finding a surgery description of the knot; 
that is a framed link in the complement of the unknot such that the framed link describes $S^3$ and the unknot represents the original knot. 
Moreover each of the components of the framed link should have linking number zero with the unknot.  
For this method to work, it is important that the surgery presentation have a nice form. 
In this paper, we will  show that all knots have a surgery presentations  of this form (in fact an even nicer form that we will call standard.)  
Another explicit method of computation was  given by Achir, and Blanchet  \cite{AB}. 
This method starts with any Seifert surface.
The second author   also considered the further invariant obtained by decorating a knot with a colored meridian (this was needed to give  formulas for the Turaev-Viro endomorphism of a connected sum, and to use the Turaev-Viro endomorphism to compute the quantum invariants of branched cyclic covers of the knot).

 Ohtsuki \cite{O,O2}  arrived at the same invariant as the Turaev-Viro polynomial
 but from a very different point of view. Ohtsuki extracts this invariant from a surgery description of a knot (alternatively of a closed 3-manifold with  a primitive one dimensional cohomology class) and the data of a modular category.   His method starts from any surgery description standard or not. This is a  significant advantage of his approach.  Ohtsuki's proof of the invariance of the polynomial in \cite{O} is only sketched. He stated that his invariant is the same as the Turaev-Viro polynomial, but does not give an explanation.  

Recently Viro has returned to these ideas \cite{Viro',Viro}. He has studied the Turaev-Viro endomorphism of a knot after coloring both the meridian and  the longitude of the knot.  Viro observed that a weighted sum of the traces of these endomorphisms  is the colored Jones polynomial evaluated at a root of unity.  

In \cite{G1,G2,G3},  Turaev-Viro endomorphisms were defined more generally  for infinite cyclic cover of 3-manifolds.
Suppose $(M,\chi)$ is a closed connected oriented  $3$-manifold $M$ with $\chi\in H^1(M, \bz)$ such that $\chi:H_1(M,\bz)\rightarrow\bz$
is onto. Let $M_{\infty}$ be the infinite cyclic cover of $M$ corresponding to $\chi$.
Choose a surface $\Sigma$ in $M$ dual to $\chi$.
By lifting $\Sigma$ to $M_{\infty}$, we obtain a fundamental domain $E$ with respect to the action of $\bz$ on $M_{\infty}$.
$E$ is a cobordism from  a surface $\Sigma$ to itself.
Let $(V,Z)$ be a $2+1$-TQFT on the cobordism category of extended $3$-manifolds and extended surfaces.
Applying $(V,Z)$ to $E$ and $\Sigma$, 
we can construct an endomorphism $Z(E):V(\Sigma) \rightarrow V(\Sigma)$. 
In \cite{G3}, it is proved that the strong shift equivalent class of $Z(E):V(\Sigma) \rightarrow V(\Sigma)$ is an invariant of the pair $(M,\chi)$, i.e. it does not depend on the choice of $\Sigma$. 
We  denote this SSE class by
$\mathcal{Z}(M,\chi)$.
We will sometimes refer to a  pair $(M, \chi)$ as above, informally,  as a 3-manifold with an infinite cyclic covering. 

The  knot invariants discussed above can be obtained as special cases of the above invariants of 3-manifolds with an infinite cyclic covering.   For any {\it oriented} knot $K$ in $S^3$, 
we obtain an extended $3$-manifold $S^3(K)$ by doing $0$-surgery along $K$.
We choose $\chi$ to be the integral cohomology class that evaluates to $1$ on a positive meridian of $K$.  Then it is easy to see that the invariant $\mathcal{Z}(S^3(K),\chi)$ corresponding to $(S^3(K),\chi)$ only depends on $K$.   If our TQFT is defined for 3-manifolds with colored links, one may obtain further invariants by 
coloring the meridian and the longitude (a little further away) of the knot. 

For the knot invariants discussed above,  it is  required, in general, that $K$ be oriented \footnote{ For TQFTs over a field satisfying some common axioms, the Turaev-Viro endomorphisms of a knot and its inverse have the same SSE class. This follows from \cite[Proposition 1.5]{G1} and Proposition \ref{similar}.} This is so that  the exterior of a Seifert surface acquires a direction as a cobordism from the Seifert surface to itself. However, we decided to delay mentioning this technicality.
To avoid issues that arise from phase anomalies in TQFT, in this paper, we  work with extended manifolds as in Walker \cite{Walker} and Turaev \cite{T}.
In this introduction, we omit mention of the integer weights and lagrangian subspaces of  extended manifolds. We discuss extended manifolds carefully in the main text.

\subsection{Results of this paper}

Inspired by Ohtsuki, we  construct a SSE class ${Z}(M,\chi)$ from a 
framed ( or banded)  
tangle in $S^2 \times I$ that arises in a surgery presentation of $(M,\chi)$. We call this the tangle endomorphism.
Moreover we show that the endomorphism (or square matrix) that Ohtsuki  considers in this situation is well defined up to SSE.
By relating the definition of the Turaev-Viro endomorphism to Ohtsuki's matrix, we give a different proof of the invariance of Ohtsuki's invariant. In fact, we show that Ohtsuki's matrix has the same SSE class as the Turaev-Viro endomorphism, i.e.  $\mathcal{Z}(M,\chi)={Z}(M,\chi)$.  
We do not prove these results in the general case of a TQFT arising from a modular category. We only work in the context of the skein approach for TQFTs associated to  $SO(3)$ and  $SU(2)$.
We work with a modified Blanchet-Habegger-Masbaum-Vogel  approach \cite{BHMV} as outlined in \cite{GM}. This theory  is defined over a slightly localized cyclotomic ring of integers. It is worthwhile studying endomorphisms defined up to strong shift equivalence over this ring rather than passing to a field.

We show that the  traces of the Turaev-Viro polynomial of knots with the meridian and longitude colored turns out to encode exactly the same information as the colored Jones polynomial evaluated at a root of unity.

\subsection{Organization of this paper}
In section \ref{preliminary}, we discuss extended manifolds, a variant of the TQFT constructed in \cite{BHMV}, surgery presentations and 
the definition of SSE.  
In section \ref{thetanglemorphism}, we construct an endomorphism for each framed tangle in $S^2 \times I$ and apply it to the tangle obtained from a surgery presentation of an infinite cyclic cover of a 3-manifold. 
We call it the tangle endomorphism.
Then we state Theorem \ref{main} which states that  the SSE class of a tangle  endomorphism constructed  from a surgery presentation of $(M, \chi)$ is an of  invariant $(M, \chi)$.
In section \ref{TVM}, we discuss technical details concerning the Turaev-Viro endomorphism for $(M,\chi)$, and
 the method of calculating $\mathcal{Z}(M, \chi)$ introduced in \cite{G1}.
In section \ref{relation}, we relate the tangle endomorphism associated to  a nice surgery presentation to the corresponding Turaev-Viro endomorphism.
In section \ref{PMT}, we prove  Theorem \ref{main}. 
In section \ref{CJ}, we give  formulas relating the colored Jones  polynomial to the traces of Turaev-Viro endomorphism of a knot whose meridian and longitude are colored.
In section \ref{example}, we compute two examples to illustrate these ideas.

\subsection{Convention} All surfaces and 3-mainifolds are assumed to be oriented.


\section{Preliminaries}
\label{preliminary}

\subsection{Extended surfaces and extended $3$-manifolds.}
For each  integer $p \ge 3$, Blanchet, Habegger, Masbaum and Vogel define a TQFT from quantum invariants of $3$-manifolds at $2p$th root of unity over a $2+1$-cobordism category in \cite{BHMV}.
The cobordism category has surfaces with $p_1$-structures as objects and $3$-manifolds with $p_1$-structures as morphisms.
They introduce $p_1$-structures in order to resolve the framing anomaly.
Following \cite{G4,GM}, we will adapt the theory by using extended surfaces and extended $3$-manifolds in \cite{Walker,T} instead of $p_1$-structures to resolve the framing anomaly.
In the following, all homology groups have rational coefficients except otherwise stated.

\begin{de}
An extended surface $(\Sigma,\lambda(\Sigma))$ is a closed  surface $\Sigma$ with a lagrangian subspace $\lambda(\Sigma)$ of $H_1(\Sigma)$ with respect to its intersection form, 
which is a symplectic form on $H_1(\Sigma)$.
\end{de}

\begin{de}
An extended $3$-manifold $(M,r,\lambda(\partial M))$ is a $3$-manifold with an integer $r$, 
called its weight, 
and whose oriented boundary $\partial M$ is given an extended surface structure with lagrangian subspace $\lambda(\partial M)$. 
If $M$ is a closed extended 3-manifold, 
we may denote the extended 3-manifold simply by $(M,r)$.
\end{de}

\begin{rem}
Suppose we have an extended $3$-manifold $(M,r,\lambda(\partial M))$ and $\Sigma\subset\partial M$ is a closed surface.
Then
\begin{equation}
\lambda(\partial M)\cap H_1(\Sigma)
\notag
\end{equation}
need  not be a lagrangian subspace of $H_1(\Sigma)$.
\end{rem}

\begin{de}
Suppose we have an extended $3$-manifold $(M,r,\lambda(\partial M))$ and $\Sigma\subset\partial M$ is a closed surface.  If $\lambda(\partial M)\cap H_1(\Sigma)$ is a lagrangian subspace of $H_1(\Sigma)$,
 we call $\Sigma$ equipped with this lagrangrian a boundary surface of the extended 3-manifold $(M,r,\lambda(\partial M))$.   
\end{de}

\begin{note}
If $\Sigma$ is a surface,
we use $\bar{\Sigma}$ to denote the surface $\Sigma$ with the opposite orientation.
\end{note}

\begin{prop}
\label{lagrangiansubspace}
Suppose $(V_1,\omega_1)$ and $(V_2,\omega_2)$ are two symplectic vector spaces.
Consider the symplectic vector space $V_1\oplus V_2$ with symplectic form $\omega_1\oplus\omega_2$. 
We can identify $V_1$ and $V_2$ as symplectic subspaces of $V_1\oplus V_2$.
If $\lambda\subset V_1\oplus V_2$ is a lagrangian subspace such that $\lambda\cap V_1$ is a lagrangian subspace of $V_1$, 
then $\lambda\cap V_2$ is a lagrangian subspace of $V_2$.  
\end{prop}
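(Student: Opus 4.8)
The plan is to reduce everything to a single dimension count, because isotropy comes for free. Write $\omega = \omega_1 \oplus \omega_2$ and let $\perp$ denote the symplectic orthogonal complement taken inside $(V_1 \oplus V_2, \omega)$. First I would observe that $\lambda \cap V_2$ is automatically an isotropic subspace of $(V_2,\omega_2)$: if $x,y \in \lambda \cap V_2$ then $\omega_2(x,y) = \omega(x,y) = 0$, since $\lambda$ is isotropic (being lagrangian) and $\omega_2$ is the restriction of $\omega$ to $V_2$. Consequently, to conclude that $\lambda \cap V_2$ is lagrangian in $V_2$ it suffices to prove the one numerical equality $\dim(\lambda \cap V_2) = \tfrac12 \dim V_2$.

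Next I would assemble the standard facts about a non-degenerate bilinear form on a finite-dimensional space: (i) $\dim W + \dim W^\perp = \dim(V_1 \oplus V_2)$ for every subspace $W$; (ii) $(A \cap B)^\perp = A^\perp + B^\perp$; and (iii) under the orthogonal direct-sum decomposition one has $V_1^\perp = V_2$ and $V_2^\perp = V_1$, precisely because each $\omega_i$ is non-degenerate. Combined with the two hypotheses, namely $\lambda^\perp = \lambda$ (as $\lambda$ is lagrangian) and $\dim(\lambda \cap V_1) = \tfrac12 \dim V_1$ (as $\lambda \cap V_1$ is lagrangian in $V_1$), these are all the ingredients.

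The computation then runs as follows. By (ii) and (iii), $(\lambda \cap V_2)^\perp = \lambda^\perp + V_2^\perp = \lambda + V_1$. Applying (i) gives $\dim(\lambda \cap V_2) = \dim(V_1 \oplus V_2) - \dim(\lambda + V_1)$, and expanding $\dim(\lambda + V_1) = \dim \lambda + \dim V_1 - \dim(\lambda \cap V_1)$, then substituting $\dim \lambda = \tfrac12(\dim V_1 + \dim V_2)$ and $\dim(\lambda \cap V_1) = \tfrac12 \dim V_1$, collapses the arithmetic to $\dim(\lambda \cap V_2) = \tfrac12 \dim V_2$, as required.

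There is no serious obstacle here; the argument is pure symplectic linear algebra, and the only point deserving care is the bookkeeping in (iii)—checking that the symplectic complement of $V_1$ in the orthogonal direct sum is exactly $V_2$ rather than something larger, which is where the symplectic (non-degeneracy) hypothesis on each $\omega_i$ genuinely enters. An equivalent route that I would keep in reserve uses the projection $\pi_1 \colon V_1 \oplus V_2 \to V_1$: its kernel restricted to $\lambda$ is precisely $\lambda \cap V_2$, and a one-line check shows $\pi_1(\lambda) \subseteq (\lambda \cap V_1)^{\perp_1}$ (orthogonal taken in $V_1$), whence $\dim \pi_1(\lambda) \le \tfrac12 \dim V_1$. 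Rank--nullity then yields $\dim(\lambda \cap V_2) \ge \tfrac12 \dim V_2$, while the isotropy established above gives the reverse inequality, again pinning the dimension to $\tfrac12 \dim V_2$.
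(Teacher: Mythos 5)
Your proof is correct, and your main argument takes a genuinely different route from the paper's. The paper argues by explicit basis manipulation: it takes a basis $a_1,\dots,a_n$ of $\lambda\cap V_1$, extends it to a basis of $\lambda$ by vectors $(c_j,b_j)$, uses isotropy of $\lambda$ to show each $c_j$ lies in $(\lambda\cap V_1)^{\perp_1}=\lambda\cap V_1$ (here the lagrangian hypothesis on $\lambda\cap V_1$ enters), and then subtracts elements of $\lambda\cap V_1$ to normalize the basis to $(a_1,0),\dots,(a_n,0),(0,b_1),\dots,(0,b_m)$, reading off $\dim(\lambda\cap V_2)=m$. Your primary computation is instead coordinate-free: the identity $(\lambda\cap V_2)^{\perp}=\lambda^{\perp}+V_2^{\perp}=\lambda+V_1$ (valid since nondegeneracy gives $W^{\perp\perp}=W$, which is exactly where your fact (ii) needs care) plus the modular dimension formula pins $\dim(\lambda\cap V_2)=\tfrac12\dim V_2$, with isotropy free as you say. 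What each buys: your route is shorter, avoids choosing bases, and makes transparent that only a dimension count is at stake for any nondegenerate form; the paper's normalization yields slightly more explicit information, namely the splitting $\lambda=(\lambda\cap V_1)\oplus(\lambda\cap V_2)$, which your main computation only gives a posteriori. Note also that your ``reserve'' argument via $\pi_1$ is essentially the paper's proof in disguise: the inclusion $\pi_1(\lambda)\subseteq(\lambda\cap V_1)^{\perp_1}=\lambda\cap V_1$ is precisely the paper's observation that $c_j\in\lambda\cap V_1$, with rank--nullity replacing the explicit basis adjustment.
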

\begin{proof}
Since $\lambda\cap V_1=\text{span}<a_1,\cdots,a_n>$ where $n=\frac{1}{2}\text{dim}(V_1)$,
we can assume that
\begin{equation}
\lambda=\text{span}<(a_1,0),\cdots,(a_n,0),(c_1,b_1),\cdots,(c_m,b_m)>,
\notag
\end{equation} 
where $m=\frac{1}{2}\text{dim}V_2$.
Since for any $i,j$
\begin{eqnarray}
0&=&\omega_1\oplus\omega_2((a_i,0),(c_j,b_j))\notag\\
&=&\omega_1(a_i,c_j)+\omega(0,b_j)\notag\\
&=&\omega_1(a_i,c_j),\notag
\end{eqnarray}
we have $c_j\in (\lambda\cap V_1)^\perp =\lambda\cap V_1$.
Therefore, 
\begin{equation}
\lambda=\text{span}<(a_1,0),\cdots,(a_n,0),(0,b_1),\cdots,(0,b_m)>.
\notag
\end{equation}
That means $\text{dim}(\lambda\cap V_2)=m$.
So $\lambda\cap V_2$ is a lagrangian subspace in $V_2$.
\end{proof}

\begin{cor}[\cite{GM}]
\label{complement}
Suppose we have an extended $3$-manifold $(M,r,\lambda(\partial M))$ and $\Sigma\subset\partial M$ is a boundary surface.
Then $\partial M-\Sigma$, equipped with the lagrangian $H_1(\partial M-\Sigma) \cap \lambda(\partial(M))$,
 is also a boundary surface.
\end{cor}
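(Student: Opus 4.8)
The plan is to deduce this directly from Proposition \ref{lagrangiansubspace} by choosing the appropriate symplectic splitting of $H_1(\partial M)$. First I would observe that, since $\Sigma$ is a closed surface sitting inside the closed surface $\partial M$, it is necessarily a union of connected components of $\partial M$; hence $\partial M - \Sigma$ is again a closed surface, and $\partial M$ is the disjoint union $\Sigma \sqcup (\partial M - \Sigma)$.

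Next I would set up the algebraic input. Because the two pieces are disjoint, the inclusions induce a direct sum decomposition $H_1(\partial M) = H_1(\Sigma) \oplus H_1(\partial M - \Sigma)$. The key point to verify is that this is a decomposition of \emph{symplectic} vector spaces: the intersection form on $H_1(\partial M)$ restricts to the intersection form on each summand, and any class carried by $\Sigma$ has zero intersection with any class carried by $\partial M - \Sigma$, since representatives can be chosen in the two disjoint subsurfaces and hence made disjoint. Thus, writing $V_1 = H_1(\Sigma)$ and $V_2 = H_1(\partial M - \Sigma)$ with their intersection forms $\omega_1, \omega_2$, the form on $H_1(\partial M)$ is $\omega_1 \oplus \omega_2$, which is exactly the situation of Proposition \ref{lagrangiansubspace}.

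With this identification in hand, the hypotheses transfer immediately. By definition of an extended surface, $\lambda = \lambda(\partial M)$ is a lagrangian subspace of $H_1(\partial M) = V_1 \oplus V_2$. The assumption that $\Sigma$ is a boundary surface of $(M, r, \lambda(\partial M))$ says precisely that $\lambda \cap V_1 = \lambda(\partial M) \cap H_1(\Sigma)$ is lagrangian in $V_1$. Applying Proposition \ref{lagrangiansubspace} then yields that $\lambda \cap V_2 = \lambda(\partial M) \cap H_1(\partial M - \Sigma)$ is a lagrangian subspace of $V_2 = H_1(\partial M - \Sigma)$, which is exactly the assertion that $\partial M - \Sigma$, equipped with this lagrangian, is a boundary surface.

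The only real obstacle is the middle step: confirming that the homological splitting coming from the topological disjoint union is orthogonal with respect to the intersection form, so that the ambient form is genuinely $\omega_1 \oplus \omega_2$ rather than merely block triangular. Once this is checked---which is routine, as disjoint cycles have trivial intersection number---the corollary follows formally from the preceding proposition, with no further computation required.
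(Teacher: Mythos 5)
Your proposal is correct and follows the same route as the paper, which simply cites Proposition \ref{lagrangiansubspace}; the symplectic splitting $H_1(\partial M)=H_1(\Sigma)\oplus H_1(\partial M-\Sigma)$ you verify is exactly the implicit step the paper leaves to the reader. Nothing further is needed.
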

\begin{proof}
This follows from Proposition \ref{lagrangiansubspace}.
\end{proof}

In the next three definitions, we describe the morphisms and the composition of morphisms in  
$\mathcal{C}$, a cobordism category  whose objects are extended surfaces.

\begin{de}
Let $(M,r,\lambda(\partial M))$ be an extended $3$-manifold.
Suppose
\begin{equation}
\partial M=\bar{\Sigma}\cup\Sigma',
\notag
\end{equation}
and this boundary has been partitioned into two boundary surfaces $\bar{\Sigma}$, called  (minus) the source, and $\Sigma'$,  called the target.
We write
$$(M,r,\lambda(\partial M))
:(\Sigma,\lambda (\bar{\Sigma})) 
\rightarrow (\Sigma',\lambda (\Sigma')),$$
and call $(M,r,\lambda(\partial M))$ an extended cobordism. 
\end{de}

\begin{de}
Let $\Sigma$ be a boundary surface of an extended $3$-manifold $(M,r,\lambda(\partial M))$ with inclusion map
\begin{equation}
i_{\Sigma,M}:\Sigma\rightarrow M.
\notag
\end{equation}
Let $\Sigma'$ be $\partial M-\Sigma$ with inclusion map
\begin{equation}
i_{\Sigma',M}:\Sigma'\rightarrow M.
\notag
\end{equation} 
Then we define
\begin{equation}
\lambda_M(\Sigma)=i^{-1}_{\Sigma,M}(i_{\Sigma',M}(\lambda (\Sigma'))).
\notag
\end{equation}
\end{de}

 We define the composition of morphisms in 
 $\mathcal{C}$ as the extended gluing of cobordisms.

\begin{de}
\label{extendedgluing}
Let $(M,r,\lambda(\partial M))$ and $(M',r',\lambda(\partial M'))$ be two extended $3$-manifolds.
Suppose $(\Sigma, \lambda(\Sigma))$ is a boundary surface of $(M,r,\lambda(\partial M))$ and 
$(\bar{\Sigma}, \lambda(\Sigma))$ is  a boundary surface of $(M',r',\lambda(\partial M'))$.
Then we can glue $(M,r,\lambda(\partial M))$ and $(M',r',\lambda(\partial M'))$ together with the orientation reversing identity from $\Sigma$ to $\bar{\Sigma}$ to form a new extended $3$-manifold. The new extended $3$-manifold has
\begin{enumerate}
\item base manifold: $M\cup_{\Sigma}M'$

\item lagrangian subspace:
\begin{equation}
[\lambda(\partial M)\cap H_1(\partial M-\Sigma)]\oplus[\lambda(\partial M')\cap H_1(\partial M'-\bar{\Sigma})],
\notag
\end{equation}

\item weight:
\begin{equation}
r+r'-\mu(\lambda_M(\Sigma),\lambda(\Sigma),\lambda_{M'}(\bar{\Sigma})),
\notag
\end{equation}
where $\mu$ is the Maslov index as in \cite{T}.
\end{enumerate}
\end{de}

\begin{de}
\label{selfgluing}
Let $(M,r,\lambda(\partial M))$ be an extended $3$-manifold with a boundary surface of the form $\Sigma \cup \bar{\Sigma}$.
Then we define the extended $3$-manifold obtained by gluing $\Sigma$ and $\bar{\Sigma}$ together to be the extended $3$-manifold that results from gluing $(M,r,\lambda(\partial M))$ and $(\Sigma\times[0,1],0,\lambda (\Sigma\cup\bar{\Sigma}))$ along $\Sigma\cup\bar{\Sigma}$.
In the special case that $\partial M=\Sigma\cup\bar{\Sigma}$,  we call the resulting extended $3$-manifold the closure of $(M,r,\lambda(\partial M))$.
\end{de}

\begin{rem}
One should think of the weight of an extended $3$-manifold $M$ as the signature of some background $4$-manifold \cite{Walker}.
See also \cite[p. 399]{G4}. 
\end{rem}

\begin{lem}
\label{closure}
Let $(R,r,\lambda(\partial R))$ be a morphism from $(\Sigma,\lambda(\Sigma))$ to $(\Sigma',\lambda(\Sigma'))$ and $(S,s,\lambda(\partial S))$ be a morphism from $(\Sigma',\lambda(\Sigma'))$ to $(\Sigma, \lambda(\Sigma))$.
Then the extended $3$-manifold we obtain by gluing $(R,r,\lambda(\partial R))$ to $(S,s,\lambda(\partial S))$ along $\Sigma'$ first and then closing it up along $\Sigma$ is the same as the one we obtained from gluing $(S,s,\lambda(\partial S))$ to $(R,r,\lambda(\partial R))$ along $\Sigma$ first and then closing it up along $\Sigma'$.
\end{lem}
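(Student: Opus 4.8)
The plan is to check separately that the two procedures produce the same underlying $3$-manifold, the same (empty) boundary data, and the same weight; the only substantive point is the equality of the weights.

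First I would dispose of the base manifolds. Forgetting weights and lagrangians, gluing of $3$-manifolds along boundary surfaces is associative, and the closure of Definition \ref{selfgluing} merely glues in a cylinder $\Sigma\times[0,1]$, which is a collar and so does not change the homeomorphism type beyond identifying the two copies of $\Sigma$. Hence both procedures yield the $3$-manifold obtained from $R$ and $S$ by identifying their two copies of $\Sigma'$ and their two copies of $\Sigma$, and the two resulting closed base manifolds are canonically homeomorphic. Since the result is closed, there is no boundary lagrangian left to compare.

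It remains to compare the weights, which is the heart of the lemma. Applying Definition \ref{extendedgluing} at each of the two gluings, I would write the weight produced by the first procedure as
\[
r+s-\mu\big(\lambda_R(\Sigma'),\lambda(\Sigma'),\lambda_S(\overline{\Sigma'})\big)-\mu\big(\lambda_{R\cup S}(\Sigma\cup\bar{\Sigma}),\,\lambda(\Sigma)\oplus\lambda(\Sigma),\,\Delta_\Sigma\big),
\]
where the first Maslov term is the correction for gluing along $\Sigma'$, the second is the correction for the closure along $\Sigma$, and $\Delta_\Sigma$ denotes the lagrangian that the cylinder $\Sigma\times[0,1]$ induces on $\Sigma\cup\bar{\Sigma}$ (the ``diagonal''). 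The second procedure gives the analogous expression with the roles of $\Sigma$ and $\Sigma'$ interchanged, so the lemma reduces to an identity between two sums of Maslov indices. To prove that identity I would use the standard properties of the Maslov index recorded in \cite{T} (antisymmetry under permutation of its three arguments and the four-term cocycle relation), together with the explicit description of the induced lagrangians $\lambda_R(\cdot)$, $\lambda_S(\cdot)$ as kernels/preimages under the inclusions, and of the diagonal lagrangians coming from the cylinders. The main obstacle is exactly this bookkeeping: the two composition-corrections live in the symplectic spaces $H_1(\Sigma')$ and $H_1(\Sigma)$, while the two closure-corrections live in $H_1(\Sigma\cup\bar{\Sigma})$ and $H_1(\Sigma'\cup\overline{\Sigma'})$, so one must first relate Maslov indices across these different spaces before the cocycle relation can be used to cancel them.

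Alternatively, and perhaps more transparently, I would appeal to the interpretation of the weight as the signature of a background $4$-manifold (the Remark after Definition \ref{selfgluing}, and \cite{Walker,G4}): each gluing of extended $3$-manifolds corresponds to gluing bounding $4$-manifolds, with the Maslov correction being precisely Wall's non-additivity defect for the signature. Both procedures assemble a background $4$-manifold from the same pieces, namely those bounding $R$, $S$, and a cylinder (which contributes signature $0$), glued to homeomorphic results; hence the two total signatures, and therefore the two weights, coincide. Either route reduces the statement to known additivity properties of the signature and of the Maslov index, which is why I expect the genuine difficulty to be organizational rather than conceptual.
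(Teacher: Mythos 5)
Your second argument---interpreting the weight as the signature of a background $4$-manifold, with the Maslov term as Wall's non-additivity correction, so that both gluing orders assemble the same closed $4$-manifold and hence the same signature---is exactly the paper's proof, which consists of a one-line appeal to the $4$-manifold interpretation of weights in \cite{Walker,GM}. Your first, purely symplectic route is only a plan (the key identity between the two sums of Maslov indices is stated but never verified), but since the signature argument stands on its own, the proposal is correct and essentially matches the paper.
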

\begin{proof}
This can be seen from $4$-manifold interpretation of weights in \cite{Walker,GM}.\end{proof}

Extended surfaces may also be equipped with banded points: this is an embedding of the disjoint union of oriented intervals. By a framed link, we will mean what is called a banded link in  \cite[p.884]{BHMV}, i.e. an embedding of the disjoint union of oriented annuli.  Framed $1$-manifold are defined similarly. Extended $3$-manifolds are sometimes equipped with framed links, or framed 1-manifolds  or more generally trivalent fat graphs.  
 By a trivalent fat graphs, we will mean what is called a banded graph in  \cite[p.906]{BHMV}. The framed links, framed 1-manifolds and trivalent fat graphs must meet the boundary surfaces of a 3-manifold in banded points with the induced ``banding''.  Of course,  we could have used the word ``banded'' in all cases, but the other terminology is more common.

There is  a surgery theory for extended $3$-manifolds.
We refer the reader to \cite[\S 2]{GM}.
Here we give extended version of Kirby moves \cite{K}. 
 These moves relate  framed links in  $S^3$ where $S^3$ is itself equipped with an integer weight. The result of extended surgery of $S^3$ with its given weight along the link is preserved by these moves. Moreover (but we do not use this) if surgery along two framed links in weighted copies of $S^3$ result in the same extended manifold then there is sequence of extended Kirby moves relating them.

\begin{de}
The extended Kirby-$1$ move is the regular Kirby-$1$ move with weight of manifold changed accordingly. 
More specifically, if we add an $\epsilon$-framed unknot  to the surgery link, then we change the weight of the manifold by $-\epsilon$, where $\epsilon=\pm 1$. If we delete an  $\epsilon$-framed unknot from the surgery link, then we change the weight of the manifold by $\epsilon$.
The extended Kirby-$2$ move is the regular Kirby-$2$ move with the weight remaining the same.
\end{de}

\subsection{A variant of the TQFT of Blanchet, Habegger, Masbaum and Vogel.}
Suppose a closed connected $3$-manifold $M$ is obtained from $S^3$ by doing surgery along a framed link $L$, then
 $(M,r)$ is obtained from $(S^3,r-\sigma(L))$ by doing extended surgery along $L$.
Here $\sigma(L)$ is the signature of  the linking matrix of $L$.  
 Warning this is different than the signature of $L$ .
The  quantum invariant of $(M,r)$ at a $2p$th root of unity $A$ is then defined as:
\begin{equation}
Z((M,r))=\eta\kappa^{r-\sigma(L)}<L(\omega)>,
\notag
\end{equation}
where
\begin{equation}
\Delta_k= <U(e_k)>, \hskip 0.1in \eta^{-1}=\sqrt{\sum_k\Delta_k^2},\hskip 0.1in\omega=\sum_k\eta\Delta_k e_k,\hskip 0.1in\kappa=<U_+(\omega)>.
\notag
\end{equation}
We use  $< \ >$  to denote the Kauffman bracket evaluation of a linear combination of colored links in $S^3$, and $\mathcal{L}(x)$ to denote the satellization of a framed link $\mathcal{L}$ by a skein $x$ of the solid torus.  Moreover $e_k$ denotes the skein class  in the solid torus obtained by taking the closure of $f_k$,   the   Jones-Wenzl idempotent in the $k$-strand Temperley-Lieb algebra. Here $U$ denotes the zero framed unknot and  $U_+$ is the unknot with framing $+1$. 
The sum is over the colors $0\leq k\leq p/2-2$ 
 if $p$ is even and $0\leq k\leq p-3$ with $k$ even if $p$ is odd.
One has that $\kappa$ is a square root of $A^{-6-p(p+1)/2}$. 
The choice of square root here determines the choice in  the square root in the formula of $\eta^{-1}$, or vice-versa. 
See the formula for $\eta$ in \cite[page 897]{BHMV}.
The  closed connected  manifold $M$ may also have an embedded $p$-admissibly colored
fat trivalent graph $G$ in the complement of the surgery, then
$$Z((M,r),G)=\eta\kappa^{r-\sigma(L)}<L(\omega) \cup G>.$$
By following the exactly the same procedure in \cite{BHMV}, 
we can construct a TQFT for the category of extended surfaces and extended $3$-manifolds from quantum invariants. 
The TQFT assigns to each extended surface $(\Sigma, \lambda(\Sigma))$, possibly with some
 banded colored points, 
a module $V(\Sigma,\lambda(\Sigma))$ over $k_p=\bz[{\frac 1
 p},A, \kappa]$, and assigns  to each extended cobordism $M$, 
 with a $p$-admissibly colored trivalent
  fat graph meeting the banded colored points, 
\begin{equation}
(M,r,\lambda(M)):(\Sigma,\lambda ({\Sigma}))\rightarrow(\Sigma',\lambda (\Sigma'))
\notag
\end{equation}
a $k_p$-module homomorphism:
\begin{equation}
Z((M,r,\lambda(M))):V((\Sigma,\lambda ({\Sigma}))\rightarrow V((\Sigma',\lambda (\Sigma'))).
\notag
\end{equation}
Then by using this TQFT, we can produce a Turaev-Viro endomorphism associated to  each weighted 
closed 3-manifold equipped with a choice of infinite cyclic cover using the procedure described in
 \S \ref{introduction}.

\begin{note}
We introduce some notations that will be used later.
\begin{enumerate}
\item $\Lambda^{(l)}_k=\eta^l\Delta^l_kf_k$,
\item $\omega^{(l)}=\sum_k\eta^l\Delta^l_k e_k$,  
\item $\Theta(a,b,c)$ is the Kauffman bracket of the left diagram in Figure \ref{thetatet},
\item $Tet(a,b,c,d,e,f)$ is the Kauffman bracket of the right diagram in Figure \ref{thetatet}.
\end{enumerate}

\begin{figure}[h]
\includegraphics[width=2in,height=1in]{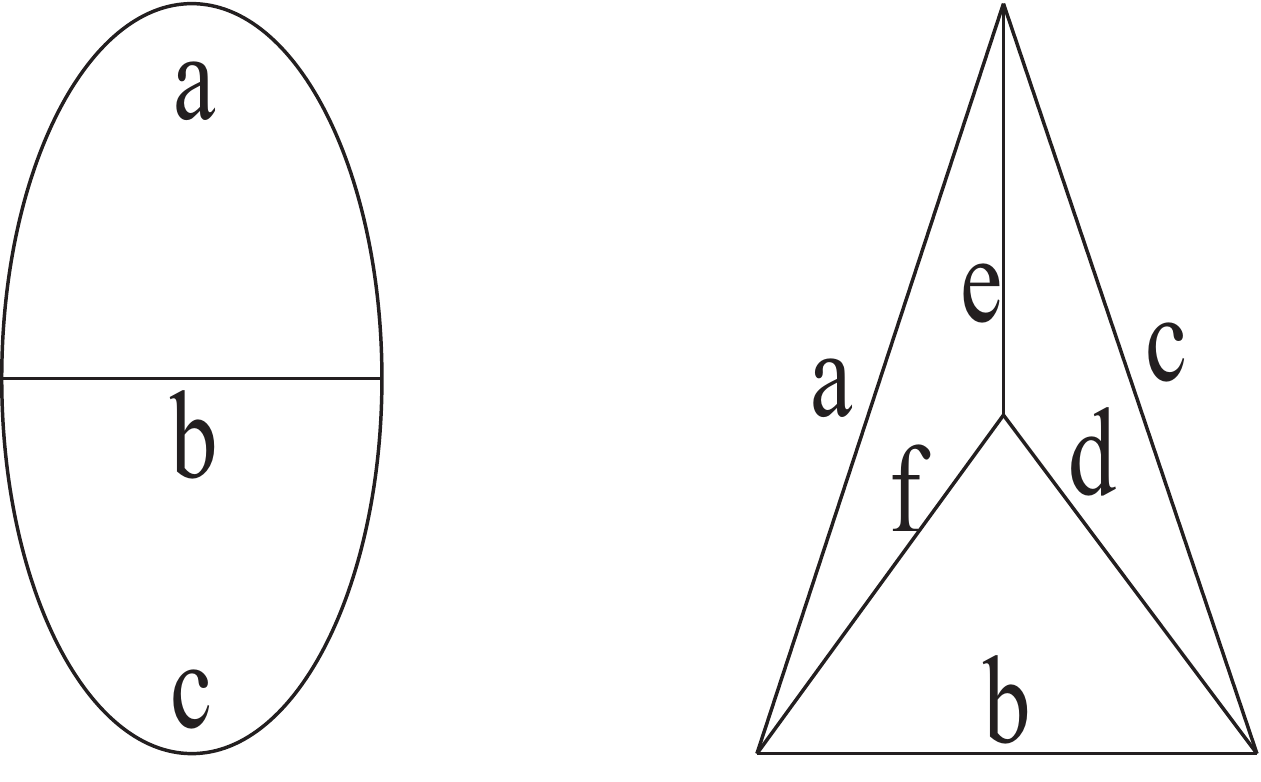}
\caption{On the left is $\Theta(a,b,c)$, and on the right is $\Tet(a,b,c,d,e,f)$.}
\label{thetatet}
\end{figure}
\end{note}

\subsection{Surgery presentations}

The earliest use of surgery presentations, that we are aware of, was  by Rolfsen \cite{R2} to compute and study the Alexander polynomial. 
In this paper we consider surgery descriptions for extended closed 3-manifolds with an infinite cyclic cover. 
We will  use these descriptions for extended 3-manifolds that contain certain colored trivalent fat graphs. 
As this involves no added difficulty, we will not always mention these graphs in this discussion.

\begin{de}
Let $K_0\cup L$ be a framed link inside $(S^3,s)$ where $K_0$ is an oriented $0$-framed unknot, 
and the linking numbers of the components of $L$ with $K_0$ are all zero. 
Let $D_0$ be a disk in $S^3$  with boundary $K_0$ which is transverse to $L$.
Suppose $(M,r)$ is the result of extended surgery along $K_0\cup L$,
then there exists a unique epimorphism $\chi: H_1(M,\bz)\rightarrow\bz$ which agrees with the linking number with $K_0$ on cycles in $S^3\setminus(K_0\cup L)$. 
We will call  $(D_0,L,s)$ a surgery presentation of $((M,r),\chi)$. 
We  remark that, in this situation, we will have $s= r -\sigma(L)$. 
If there are  graphs $G'$ in $M$ and $G$ in $S^3\setminus (K_0\cup L)$ (transverse to $D_0$)   related by the surgery, 
we will say $(D_0,L,s,G)$ a surgery presentation of $((M,r),\chi,G')$.
\end{de}

If the result of  surgery along $L$ returns $S^3$ with the image of $K_0$ after surgery becoming a knot  oriented knot $K$,  and the linking numbers of the components of $L$       
with $K_0$, then $K_0\cup L$ is a surgery presentation of $K$ as in Rolfsen. 
The manifold obtained by surgery along $K_0\cup L$ in $S^3$ is the same as $0$-framed surgery along $K$ in $S^3$.

The following Proposition  is proved  in section 4 of \cite{O} for non-extended manifolds.  The extended version involves no extra difficulty
 
\begin{prop}
Every extended   connected $3$-manifold with an epimorphism $\chi: H_1(M,\bz) \rightarrow \bz$  has a surgery presentation.
\end{prop}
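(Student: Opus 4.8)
The goal is to show that every extended connected $3$-manifold $M$ equipped with an epimorphism $\chi: H_1(M,\bz) \rightarrow \bz$ admits a surgery presentation in the sense just defined, namely a framed link $K_0 \cup L$ in a weighted $S^3$ with $K_0$ a $0$-framed oriented unknot, all components of $L$ having linking number zero with $K_0$, such that extended surgery yields $(M,r)$ and the induced epimorphism equals $\chi$. Since the excerpt tells us the non-extended version is established in \cite[\S 4]{O} and that the extended refinement ``involves no extra difficulty,'' the plan is to reduce to Ohtsuki's result and then account for the weight bookkeeping.

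\textbf{The plan.} First I would invoke the classical fact that every closed connected oriented $3$-manifold is obtained from $S^3$ by integral surgery along some framed link $L'$; this is Lickorish--Wallace. Next I would realize the cohomology class $\chi$ geometrically: choosing a closed oriented surface $\Sigma$ dual to $\chi$, or equivalently a map $M \to S^1$ representing $\chi$, one obtains a curve whose preimage under the surgery describes how $\chi$ interacts with the link components. The key step, which is exactly Ohtsuki's argument, is to modify the surgery link $L'$ by a sequence of Kirby moves so that there is a distinguished component that becomes a $0$-framed unknot $K_0$ realizing $\chi$ (i.e. linking number with $K_0$ computes $\chi$ on the relevant cycles), while all remaining components $L$ have linking number zero with $K_0$. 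This is achieved by first producing some component along which $\chi$ evaluates to a generator, then using handle-slides (Kirby-$2$ moves) to cancel the linking of the other components with it, and finally arranging its framing and unknottedness. I would cite \cite{O} for this non-extended construction.

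\textbf{The extended upgrade.} With the non-extended surgery presentation $(D_0, L)$ in hand, producing the extended version is a matter of correctly assigning a weight to the ambient $S^3$. Given the target weight $r$ on $(M,r)$, I would simply set $s = r - \sigma(L)$, as dictated by the very definition of a surgery presentation and the formula relating the weight of $S^3$ to the weight of the surgered manifold. The extended Kirby moves recalled earlier in the excerpt guarantee that this weight assignment is compatible: the extended Kirby-$1$ move changes the weight by $\mp\epsilon$ precisely to compensate for adding or deleting an $\epsilon$-framed unknot, and the extended Kirby-$2$ move leaves the weight unchanged. Hence any sequence of Kirby moves used in Ohtsuki's construction can be promoted to a sequence of \emph{extended} Kirby moves, and the weight bookkeeping tracks automatically so that the final $(S^3, s)$ surgers to the prescribed $(M, r)$. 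If colored trivalent fat graphs $G'$ are present in $M$, I would isotope them off the surgery region and carry them along as $G$ in $S^3 \setminus (K_0 \cup L)$, transverse to $D_0$; this is the standard observation that graphs disjoint from the surgery link are unaffected.

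\textbf{The main obstacle.} The genuinely substantive content is the middle step: arranging, via Kirby moves, a preferred $0$-framed unknot component $K_0$ that faithfully represents $\chi$ with all other components unlinked from it. This is where \cite{O} does the real work, and in a self-contained account one would need to verify that the epimorphism induced by $K_0$ (linking number on cycles in the complement) indeed coincides with $\chi$, using the surgery presentation of $H_1(M,\bz)$ by the linking matrix together with the dual description of $\chi$. Everything after that — the weight computation $s = r - \sigma(L)$ and the passage to extended manifolds — is formal, exactly as the excerpt anticipates when it says the extended version ``involves no extra difficulty.'' Accordingly, I would write the proof as a short reduction: cite Ohtsuki for the underlying topological link, then observe that the extended Kirby calculus and the defining weight relation immediately furnish the extended surgery presentation.
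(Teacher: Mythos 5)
Your proposal is correct and follows essentially the same route as the paper, which likewise treats the substantive topological step as Ohtsuki's result (\S 4 of \cite{O}) and dismisses the extended refinement as formal weight bookkeeping via $s = r - \sigma(L)$ and the extended Kirby moves. Your added sketch of Ohtsuki's construction and the handling of colored graphs go slightly beyond what the paper records, but they are consistent with it and introduce no gap.
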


Every surgery presentation can be described  by  diagram as in  Figure \ref{almoststandardform} which we will refer
to as a surgery presentation diagram.

\begin{figure}[h]
\includegraphics[width=1.2in,height=1.2in]{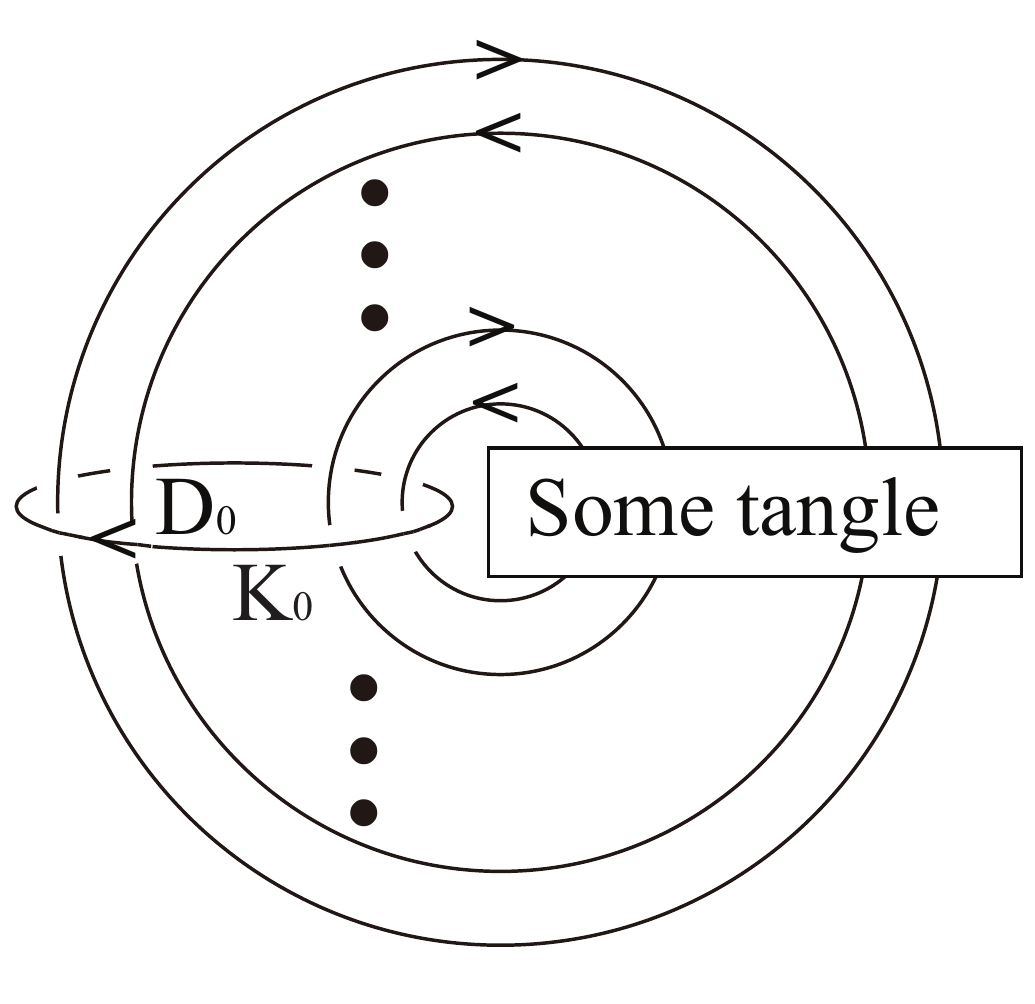}
\caption{A surgery presentation diagram.  Of course, the tangle must be such that each closed component of $L$ has zero linking number with $K_0$. Notice the orientation on $K_0$. }
\label{almoststandardform}
\end{figure}

\begin{de}
If a surgery presentation diagram  is in the form of Figure \ref{standardform},
then we say this surgery presentation diagram  is in standard form. We will also say that  a surgery presentation $(D_0,L,s,G)$ is standard  if it has a surgery presentation diagram  in standard form.
\end{de}
\begin{figure}[h]
\includegraphics[width=1.2in,height=1.2in]{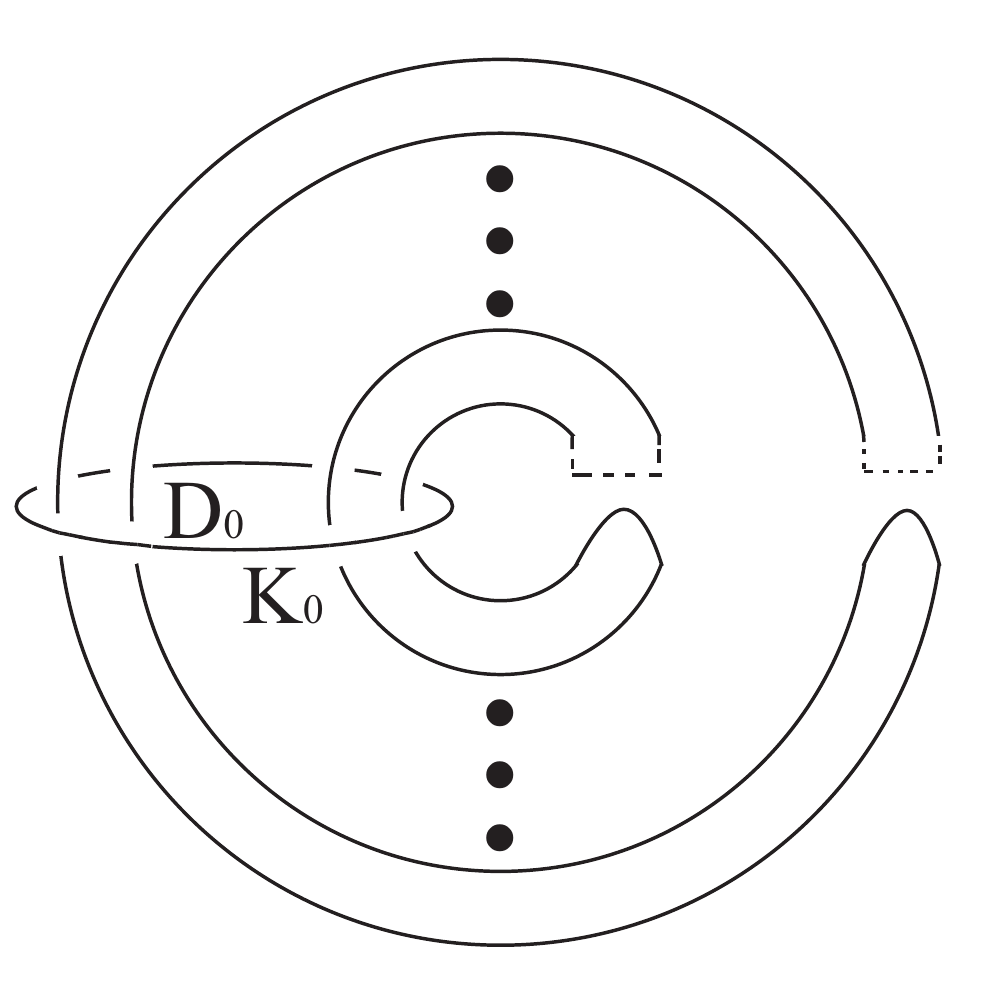}\hskip 0.5in
\caption{The dotted part could be knotted or linked with other strands within the tangle box.
The bottom turn-backs are simple arcs without double points under the projection.
Each component of $L$ intersects the flat disc $D_0$ bounded by the trivial knot algebraically $0$ times,
but geometrically $2$ times.}
\label{standardform}
\end{figure}

 Ohtsuki \cite[bottom of p. 259]{O}  stated a  proposition about  surgery presentations of knots 
 which is similar to the following proposition.  Our proof is similar to the proof that Ohtsuki indicated. 
We will call a Kirby-$1$ move in a surgery presentation a small Kirby-$1$ move if a disk which bounds  the created or deleted  component is  in the complement of $D_0$.
We will call a Kirby-$2$ move in a surgery presentation a small Kirby-$2$ move if it involves sliding a component other than $K_0$ over another component that is  in the complement of $D_0$.
A $D_0$-move is a choice of a new spanning disk $D'$  with $D_0 \cap D'=K_0$ followed by an ambient isotopy that moves $D'$ to the original position of $D_0$ and moves $L$ at the same time. 

\begin{prop}
\label{standardization}
A surgery presentation described by a surgery presentation diagram  can be transformed into a surgery presentation described by a surgery presentation diagram  in  standard form by a sequence of isotopies of  $L \cup G$ relative to $D_0$,  small Kirby-$1$ moves, small Kirby-$2$ moves, and $D_0$-moves.
Therefore, every extended $3$-manifold with an epimorphism $\chi: H_1(M,\bz)\rightarrow\bz$ has a standard surgery presentation.
\end{prop}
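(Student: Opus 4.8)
The plan is to reduce the proposition to a normal form for how $L$ meets $D_0$. After an initial isotopy placing $L$ in general position, each component $L_i$ is transverse to $D_0$ and, since $\operatorname{lk}(L_i,K_0)=0$, meets $D_0$ in an even number $2n_i$ of points, $n_i$ of each sign. The standard form of Figure~\ref{standardform} is precisely the condition that $n_i=1$ for every $i$, together with the requirement that the single arc of $L_i$ lying below $D_0$ be embedded and disjoint from the lower arcs of the other components. So it suffices to (a) reduce every $n_i$ to $1$, and then (b) comb the resulting lower arcs into simple turn-backs, all by means of isotopies of $L\cup G$ rel $D_0$, small Kirby-$1$ and Kirby-$2$ moves, and $D_0$-moves.

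First I would establish the key reduction step: if some $n_i\ge 2$, the count $2n_i$ can be lowered by $2$. Traversing $L_i$, choose two consecutive intersection points $p,q$ of opposite sign. After an isotopy rel $D_0$ (and, if needed, a $D_0$-move re-choosing the spanning disk), arrange that the two strands of $L_i$ through $p$ and $q$ -- one oriented into $D_0$, the other out of it -- run parallel and adjacent just below $D_0$, with no other strand between them. Because these two strands carry opposite orientations, an unknot encircling both has linking number zero with every component; hence a distant $\pm1$-framed unknot $C$ introduced below $D_0$ by a small Kirby-$1$ move can be isotoped so as to encircle exactly this pair while remaining in the complement of $D_0$. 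Removing $C$ by the corresponding Rolfsen twist -- a sequence of small Kirby-$2$ slides of $L_i$ over $C$ (legitimate since $C$ misses $D_0$) followed by a small Kirby-$1$ blow-down of the freed, distant $C$ -- inserts a local full twist that unclasps $p$ and $q$. In the resulting picture the arc $\alpha\subset L_i$ joining $p$ to $q$ below $D_0$ together with an arc $\beta\subset D_0$ bounds an embedded Whitney disk, so an isotopy across that disk cancels $p$ and $q$ and drops the intersection count by two.

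The hard part will be keeping every move small while controlling the Whitney disk. The twist is only useful if, afterwards, the disk $W$ bounded by $\alpha\cup\beta$ can be taken embedded with interior disjoint from $D_0$, from $G$, and from the remaining strands of $L$; in general $W$ is punctured by other strands, and these punctures must be removed before the cancelling isotopy is legal. The main work is to show that each puncture can be pushed off $W$ by a small Kirby-$2$ slide over a disk-avoiding component, or cleared by a $D_0$-move that re-routes $D_0$ around the offending strand, and to verify that this clearing never raises the intersection counts of the other components, so that the reduction genuinely terminates. Organizing this as a double induction -- an outer induction on $\sum_i n_i$ and an inner induction on the number of punctures of $W$ -- is, I expect, the crux of the argument, and precisely the bookkeeping that Ohtsuki only indicated.

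Finally, once every component meets $D_0$ in exactly two points, each $L_i$ has a single arc below $D_0$; any component that happens to miss $D_0$ altogether is made to meet it twice by pushing a trivial finger through $D_0$, producing an already-simple turn-back. To conclude, I would isotope rel $D_0$ to comb these lower arcs into pairwise-disjoint embedded turn-backs, absorbing any crossings or knotting among them upward through $D_0$ into the tangle box; since below the disk one has only a collection of arcs with endpoints on $D_0$, all such complication can be slid above, leaving simple turn-backs as in Figure~\ref{standardform}. This yields a standard-form surgery presentation diagram and proves the first assertion; the closing ``therefore'' then follows by applying this procedure to any surgery presentation, whose existence was established above.
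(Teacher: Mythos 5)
Your overall strategy---reduce to a normal form for $L\cap D_0$ by cancelling consecutive opposite-sign intersection pairs, then simplify the turn-backs---is the same as the paper's, but the proposal is incomplete exactly where the real content lies, and two of the mechanisms you propose do not work. The paper's induction step makes the turn-back arc $\alpha$ an overpass by the circle trick and, crucially, \emph{keeps} the new $\pm1$-framed circles as components of the surgery link; a single $D_0$-move then pulls the turn-back across $D_0$, each dragged circle acquiring exactly two intersections with $D_0$ (which the standard form tolerates) while every original component keeps its count, so $m=\max_i|L_i\cap D_0|$ strictly drops and the induction terminates transparently. You instead blow your circle $C$ down and are then forced to make the Whitney disk $W$ embedded and unpunctured---a step you explicitly leave open (``the crux''), and whose sketched resolution is dubious: a small Kirby-$2$ slide of a puncturing strand over some ``disk-avoiding component'' has no reason to exist or to remove the puncture (a band sum with a parallel pushoff generically re-punctures $W$), while re-routing $D_0$ around an offending strand can raise other components' intersection counts, so your double induction has no proven termination. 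Note also that your Rolfsen twist only unclasps the two strands of $L_i$ at $p,q$ from \emph{each other}; it does nothing about the knotting of $\alpha$ or its linking with other components and with $G$, which is the actual obstruction. Finally, the blow-down itself fails when $\alpha$ crosses the graph $G$: to delete $C$ by a small Kirby-$1$ move you must first slide every strand off its spanning disk, but sliding $G$ over a surgery component is not among the allowed moves (small Kirby-$2$ moves slide components of $L$ only). The paper's version never needs this, since only the $L$-strand is slid over the new circle, which then stays in the link.

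Your concluding combing step contains a genuine error: linking between the lower arcs of \emph{distinct} components---say a clasp between two turn-backs below $D_0$---cannot be ``absorbed upward through $D_0$'' by an isotopy fixing $D_0$, since any such isotopy would have to push one arc through the other or through the disk. This is precisely why the paper, even in its base case $m=2$, unknots and unlinks the turn-backs with the small Kirby-$1$/Kirby-$2$ crossing-change circles of Figure \ref{changecrossing} rather than by isotopy alone. So in both places where your plan needs substance---clearing the cancelling disk and simplifying the turn-backs---the missing device is the one the paper uses: introduce crossing-change circles, leave them in the surgery link, and accept the controlled cost of two new intersections with $D_0$ per circle, inducting on the maximum intersection number.
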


\begin{proof}
We need to prove that we can change a surgery presentation described by a surgery diagram as in Figure \ref{almoststandardform} into surgery presentation described  by a diagram as in Figure \ref{standardform}
using the permitted moves.

Let
\begin{equation}
m=\max_{L_i \text{\ is a component of\ } L}|L_i\cap D_0|.
\notag
\end{equation}
We will prove the theorem by induction on $m$.
Since each component $L_i$ has linking number 0 with $K_0$,
it is easy to see that $m$ is even.

If $m=0$, then $L$ can be taken to be contained in the tangle box. 

When $m=2$, we may 
\begin{itemize}
  \item first  do a $D_0$ move to shift  $D_0$ slightly;  
 \item then perform an isotopy relative to  the new $D_0$ of $L$ so that  the points on intersection of  the image of the old $D_0$ with each components of $L$  are adjacent to each other;
 \item  then do another $D_0$-move to move the old $D_0$ back to its original position.
 \end{itemize}

Now the arcs  emitted from the bottom edge of the tangle are in a correct  order. 
But the diagram in Figure \ref{almoststandardform} may differ from a standard tangle in the way that the arcs emitted from bottom edge of the tangle box are not in the specified simple form. This means they could be knotted and linked with each other. However we may 
perform  small Kirby-$1$ and small Kirby-$2$ moves as in Figure \ref{changecrossing}  to unknot and unlink these arcs so that the resulting diagram has  standard form.

\begin{figure}[h]
\includegraphics[width=0.6in,height=1in]{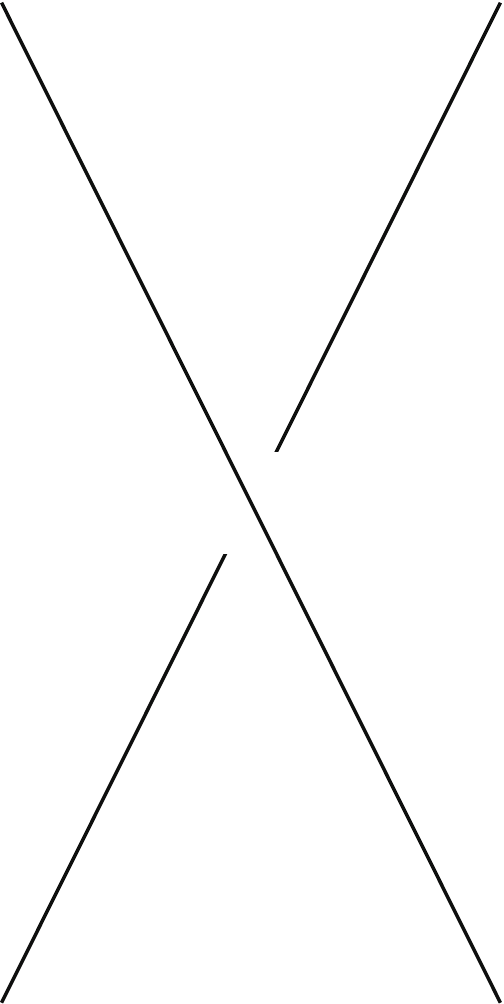}
\includegraphics[width=0.3in,height=1in]{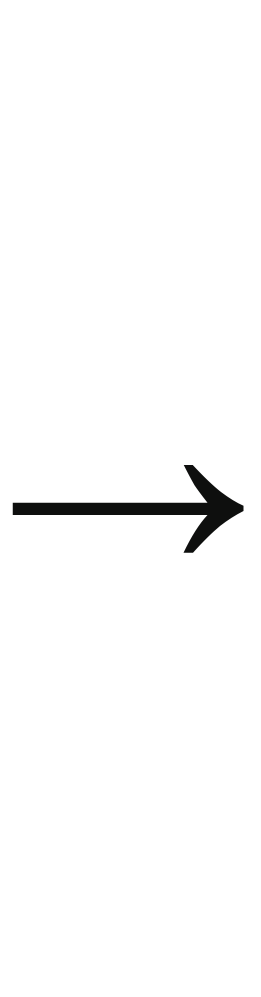}
\includegraphics[width=0.8in,height=1in]{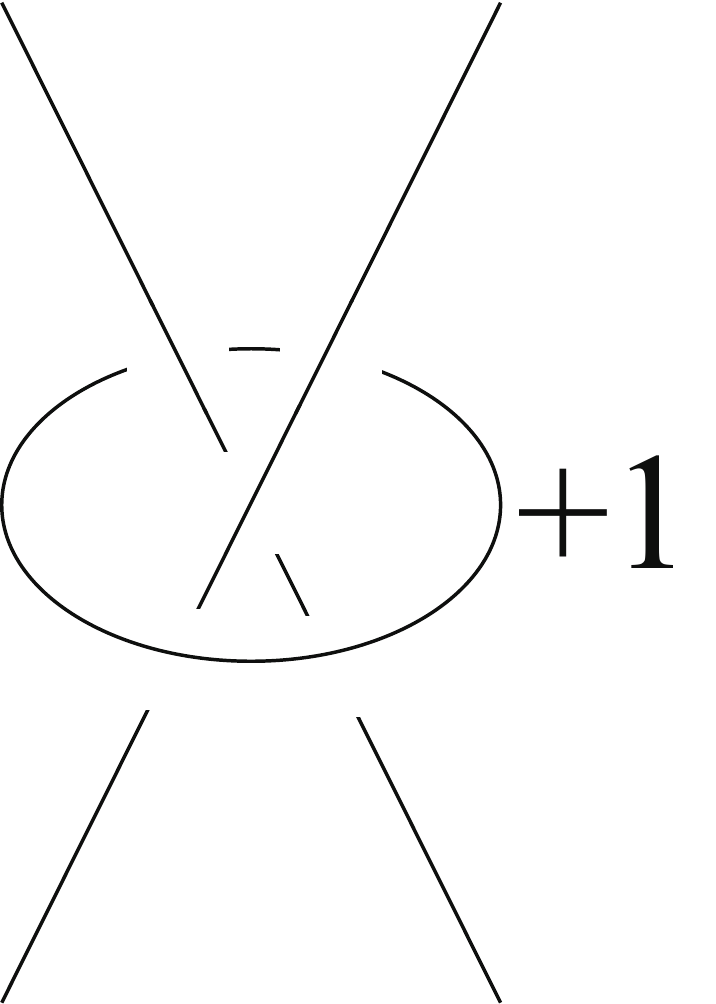}\hskip 0.2in
\includegraphics[width=0.6in,height=1in]{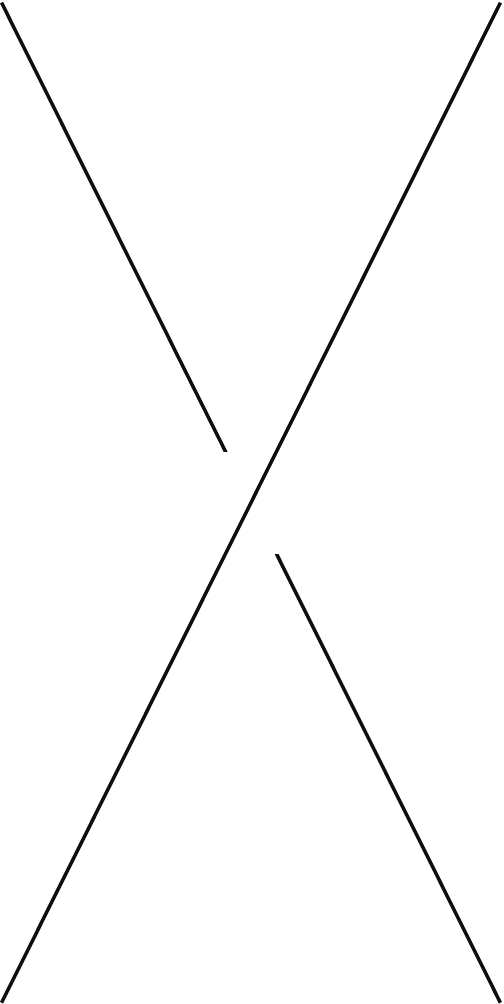}
\includegraphics[width=0.3in,height=1in]{harrow.pdf}
\includegraphics[width=0.8in,height=1in]{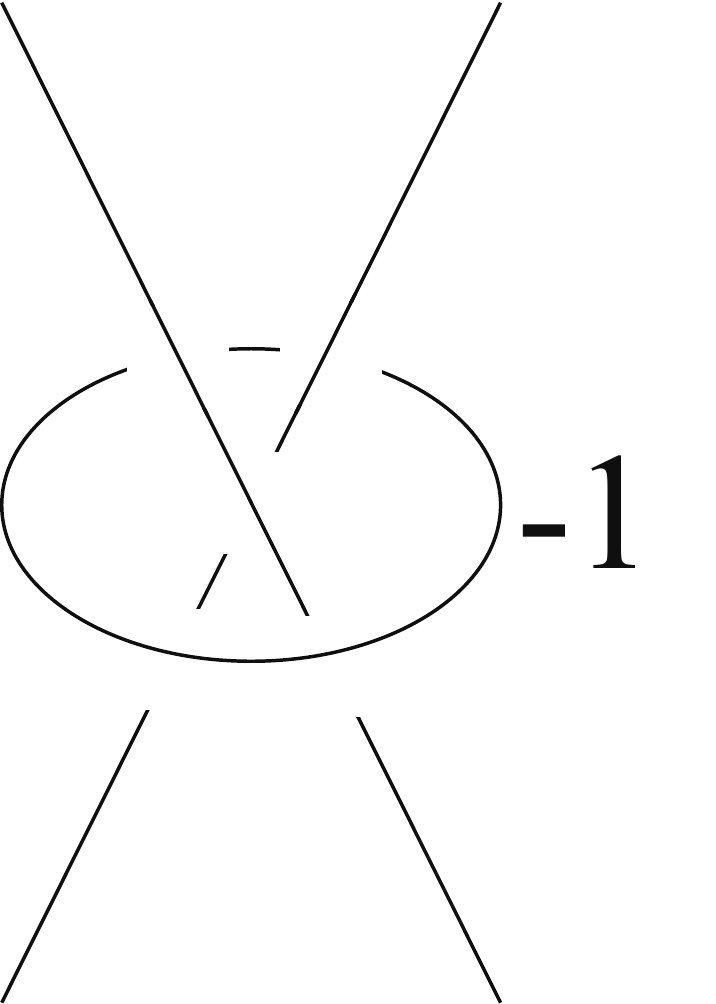}
\caption{We use $+1$ or $-1$ surgery on unknot to change the crossing.}
\label{changecrossing}
\end{figure}

 We now  prove that the theorem holds for all links with $m=2n$ where  $n \ge 2,$  assuming it holds for  all links with $m \le 2n-2$.
Suppose the component $L_1$ intersects $D_0$ geometrically $2n$ times.
Because $L_1$ has linking number $0$ with $K_0$,
we have that at least one  arc, say $\alpha$  of $L_1$ in Figure \ref{almoststandardform} which joins two points on the bottom of the  tangle box,
i.e. it is a ``turn-back''.
For each crossing with exactly one arc from $\alpha$,
we can make the arc  $\alpha$ to be the top arc (in the direction perpendicular to the plane of  the diagram) by using the moves of Figure \ref{changecrossing},
which just involve some small Kirby-$1$ and small Kirby-$2$ moves.
Then it is only simply linked to other components by some new trivial components with framing $\pm1$.
Then by using isotopies relative  to $D_0$, we can slide  the arc $\alpha$ towards bottom of the tangle, with the newly created unknots stretched vertically in the diagram so that they intersect each horizontal  cross-section in at most 2-points.   
See the central illustration  Figure \ref{example2} where $\alpha$ is illustrated by two vertical arcs meeting a small box labeled $X$. This small box contains the rest of $\alpha$.  
Now perform a $D_0$ move  which has the  effect of  pulling the turn-back across $D_0$.
Those trivial components will follow the turn back and pass through $D_0$.
But since at the beginning, those components have geometric intersection $0$ with $D_0$,
they have geometric intersection $2$ with $D_0$ now.
After this process, $L_1\cap D_0$ is reduced by $2$.
This process does not change the number of  intersections with $D_0$ of the other components of the original $L$.

\begin{figure}[h]
\includegraphics[width=0.8in,height=1in]{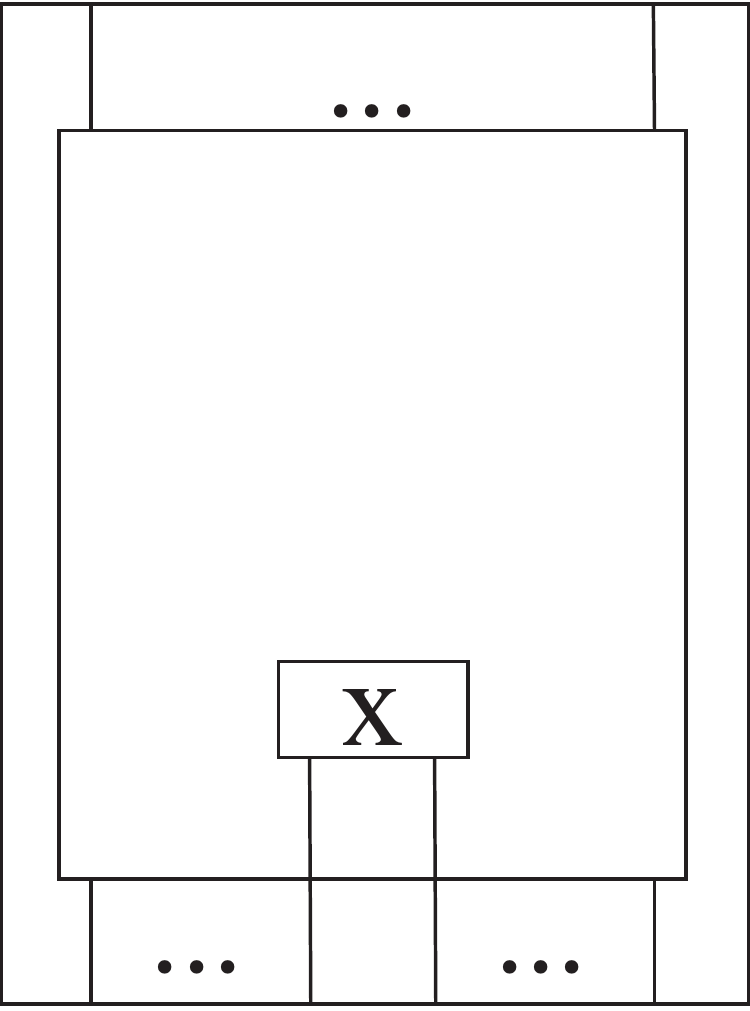}
\includegraphics[width=0.3in,height=1in]{harrow.pdf}
\includegraphics[width=0.8in,height=1in]{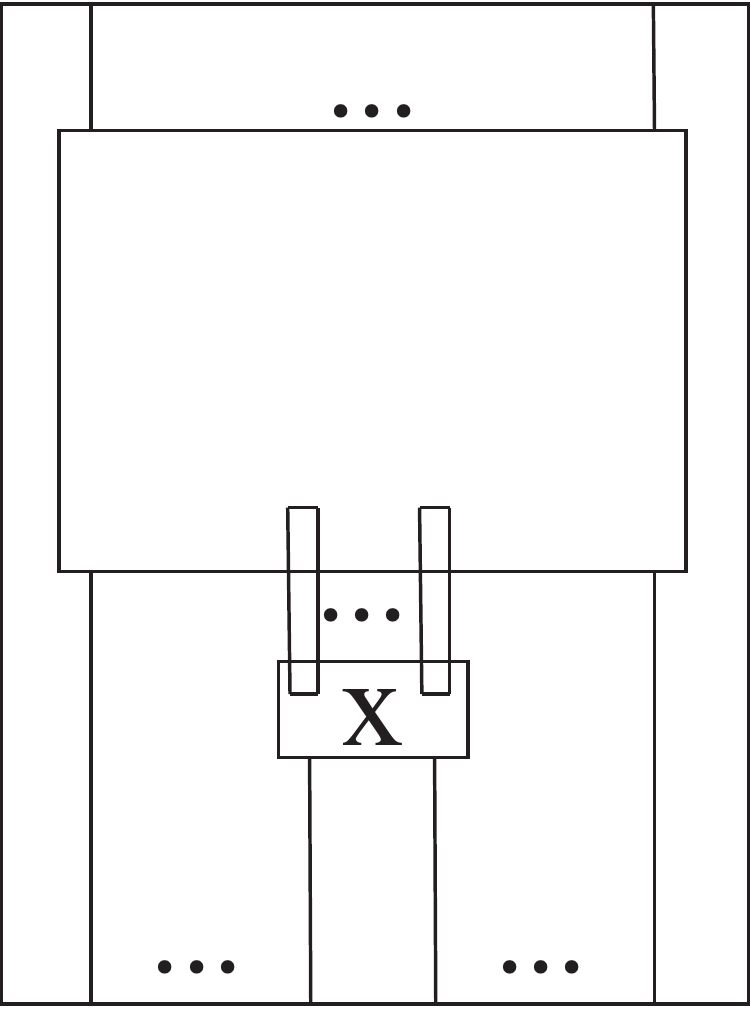}
\includegraphics[width=0.3in,height=1in]{harrow.pdf}
\includegraphics[width=0.8in,height=1in]{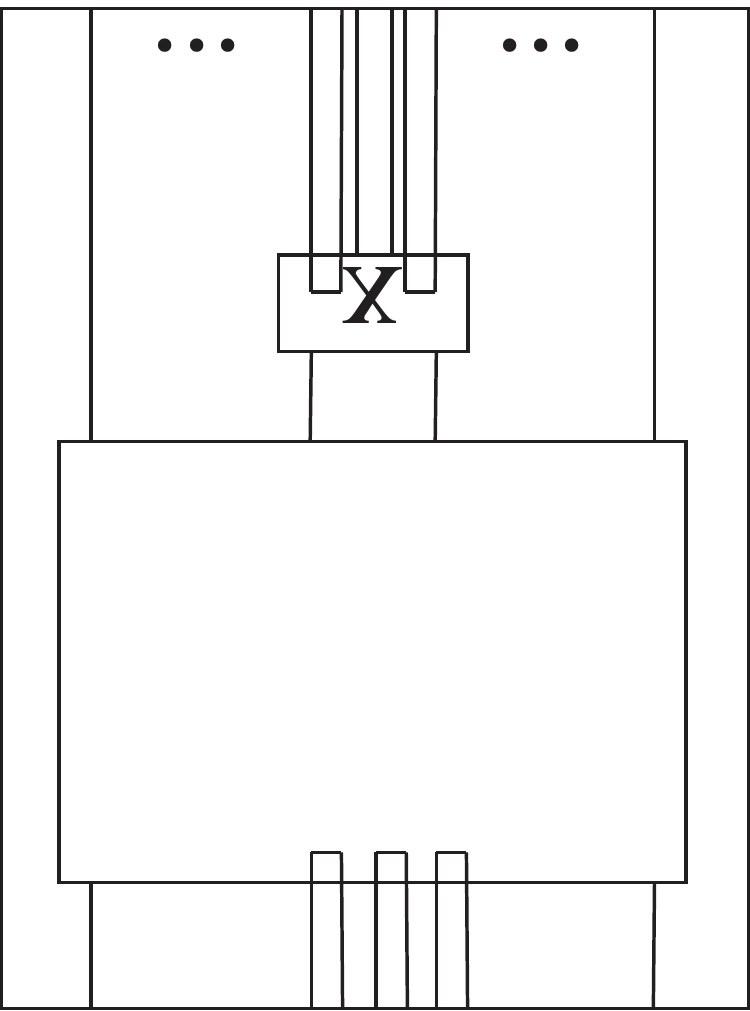}
\caption{Moves which reduce the number of intersections of a component of $L$  with $D_0$. We perform small K-moves and isotopies to change to the middle picture. 
We perform a $D_0$-move to change to the right hand picture. }
\label{example2}
\end{figure}

We do this process for all components $L_j$ with $|L_j\cap D_0|=2n$.
Then the new link has $m \le 2n-2$.
By our induction hypothesis, we can transform $K_0\cup L$ into a standard form using the allowed moves.
\end{proof}

\subsection{Strong shift equivalence. \label{sse}} 

We will discuss SSE in the category of free finitely generated modules over a commutative ring 
with identity. 
This notion arose in symbolic dynamics. For more information, 
see \cite{wagoner,LM} and references therein.

\begin{de}
Suppose
\begin{equation}
X:V\rightarrow V, Y:U\rightarrow U
\notag
\end{equation}
are module endomorphisms.
We say $X$ is elementarily strong shift equivalent to $Y$ if there are two module morphisms
\begin{equation}
R:V\rightarrow U, S:U\rightarrow V
\notag
\end{equation}
such that
\begin{equation}
X=SR,Y=RS.
\notag
\end{equation}
We denote this by $X\approx Y$.
\end{de}

\begin{de}
Suppose
\begin{equation}
X:V\rightarrow V,Y:U\rightarrow U
\notag
\end{equation}
are module endomorphisms.
We say $X$ is strong shift equivalent to $Y$ if there are finite number of module endomorphisms $\{X_1,...,X_n\}$
such that
\begin{equation}
X\approx X_1\approx X_2...\approx X_n\approx Y.
\notag
\end{equation}
We denote this by $X\sim Y$.
\end{de}

It is easy to see that if $X\sim Y$,  then  $\Trace (X)= \Trace (Y)$.

\begin{prop}\label{ker}  Let $X$ be a module endomorphism of  $V$.  Suppose $V= U\oplus W$ where  $U$ and $W$  are free finitely generated modules such that  $U$ in the kernel of $X$, and let  $\hat X$ be the induced endomorphism of $W$, then $\hat X$ is SSE to $X.$
\end{prop}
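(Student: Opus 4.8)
The plan is to realize $X$ and $\hat X$ as related by a \emph{single} elementary strong shift equivalence, so that no chain of intermediate endomorphisms is needed. Write $\iota\colon W\to V$ for the inclusion of the summand and $\pi\colon V\to W$ for the projection determined by $V=U\oplus W$; these satisfy $\pi\iota=\mathrm{id}_W$ and $\pi(u)=0$ for $u\in U$, and, by the definition of the induced map, $\hat X=\pi X\iota$. I would take
\[
R=\pi\colon V\to W,\qquad S=X\iota\colon W\to V,
\]
both of which are morphisms of free finitely generated modules, keeping us inside the category in which SSE is defined.

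The first step is to verify $X=SR$. For $v=u+w$ with $u\in U$ and $w\in W$, we have $SR(v)=X\iota\pi(v)=X\iota(w)=X(w)$, whereas $X(v)=X(u)+X(w)=X(w)$ since $u\in U\subseteq\ker X$. Hence $SR=X$; this is the only place the hypothesis $U\subseteq\ker X$ is used, and it is exactly what guarantees that projecting away $U$ via $\pi$ discards nothing on which $X$ acts nontrivially. The second step is immediate: $RS=\pi X\iota=\hat X$ by the definition of the induced endomorphism. Together these give $X\approx\hat X$, hence $X\sim\hat X$.

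I expect no genuine obstacle here; the remaining points are purely bookkeeping. One checks that $R$ and $S$ are honest module homomorphisms, which is clear as $\pi$, $\iota$, and $X$ all are, and that $V=U\oplus W$ and $W$ are free and finitely generated, which holds by hypothesis, so that the definitions of $\approx$ and $\sim$ apply verbatim. The conceptual content is simply that splitting off a free submodule lying in the kernel is already realized by one elementary move, rather than requiring the full flexibility of the strong shift equivalence relation.
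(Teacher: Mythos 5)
Your proof is correct and is essentially the paper's own argument in coordinate-free form: the paper's block matrices $\left[\begin{smallmatrix}0 & I_n\end{smallmatrix}\right]$ and $\left[\begin{smallmatrix}v \\ \hat X\end{smallmatrix}\right]$ are exactly the matrices of your $R=\pi$ and $S=X\iota$, and both arguments exhibit the single elementary equivalence $X=SR$, $\hat X=RS$.
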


\begin{proof}
Suppose $\rank(U)=m$, and  $\rank(W)=n$. The result follows from the following block matrix
equations. 
\begin{equation*}\label{sse1}
\left[
\begin{array}{c}
v_{m \times n}
 \\ 
\hline
\hat X_{n \times n}
\end{array}\right]_{(n+m) \times n} 
  \cdot 
\left[
\begin{array}{c|c}
  0_{n\times m}
 &
  I_n
\end{array}
 \right]_{n \times (n+m)}
=
\left[
\begin{array}{c|c}
0_{m \times m} & v_{m \times n} \\ \hline
0_{n \times m} &  \hat X_{n \times n}
\end{array}\right]_{(n+m) \times (n+m)} 
\end{equation*}

\begin{equation*}\label{sse2}
\left[
\begin{array}{c|c}
  0_{n\times m}
&
I_n
 \end{array}
 \right]_{n \times (n+m)}
\cdot 
\left[
\begin{array}{c}
v_{m \times n}
 \\ 
\hline
\hat X_{n \times n}
\end{array}\right]_{(n+m) \times n} 
=
\left[\hat X_{n \times n} 
\right]_{n \times n} 
\end{equation*}

\end{proof}

If $T$ is an endomorphism of a vector space $V$, let $N(T)$ denote the generalized $0$-eigenspace for $T$, and 
$T_\flat$ denote the induced endomorphism on $V/N(T)$.   The next proposition may be deduced 
from more general statements made in \cite[p. 122, Prop(2.4) ]{BH}. For the convenience of the reader, we give  direct proof.

\begin{prop}\label{similar} Let $T$ and $T'$ be  endomorphisms of  vector spaces. $T$ and $T'$ are SSE if and only if  
$T_\flat$ and $T_\flat'$ are similar.
\end{prop}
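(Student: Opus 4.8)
The plan is to prove both directions separately, exploiting the fact that $T_\flat$ is an automorphism on $V/N(T)$ and that SSE preserves this ``invertible part'' of an endomorphism.

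First I would establish the forward direction. Suppose $T \sim T'$. Since the relation $\sim$ is generated by elementary equivalences $\approx$, and similarity is transitive, it suffices to treat the case $T \approx T'$, say $T = SR$ and $T' = RS$ with $R\colon V \to U$ and $S\colon U \to V$. The key linear-algebra fact I would invoke is that for any such factorization, the nonzero eigenvalues of $SR$ and $RS$ coincide with equal multiplicities, and more precisely $R$ and $S$ restrict to mutually inverse isomorphisms between $V/N(SR)$ and $U/N(RS)$. Concretely, I would show $R(N(T)) \subseteq N(T')$ and $S(N(T')) \subseteq N(T)$, so that $R$ and $S$ descend to maps $\bar R\colon V/N(T) \to U/N(T')$ and $\bar S\colon U/N(T') \to V/N(T)$; then $\bar S \bar R = T_\flat$ and $\bar R \bar S = T'_\flat$ are automorphisms of the respective quotient spaces. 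Because $T_\flat$ is invertible, $\bar R$ is injective and $\bar S$ is surjective (and symmetrically), forcing both to be isomorphisms. Then from $\bar S \bar R = T_\flat$ one gets $\bar R\, T_\flat\, \bar R^{-1} = \bar R \bar S \bar R \bar R^{-1} = \bar R \bar S = T'_\flat$, exhibiting the desired similarity.

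Next I would handle the converse. Suppose $T_\flat$ and $T'_\flat$ are similar. The strategy is to reduce $T$ to $T_\flat$ up to SSE, and likewise $T'$ to $T'_\flat$, and then note that similar maps are trivially SSE (if $T'_\flat = P\, T_\flat\, P^{-1}$ then writing $R = P T_\flat$ and $S = P^{-1}$ gives $SR = T_\flat$ and $RS = T'_\flat$, so $T_\flat \approx T'_\flat$). For the reduction step, I would use the fact that over a field we may choose a basis adapted to the decomposition $V = N(T) \oplus C$, where $C$ is a $T$-invariant complement on which $T$ acts invertibly (this complement exists by the generalized eigenspace decomposition, taking $C$ to be the sum of generalized eigenspaces for nonzero eigenvalues). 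On $C$, $T$ acts as a map similar to $T_\flat$. To pass from $T$ on $V$ to its restriction on $C$ via SSE, I would invoke Proposition \ref{ker}: although $N(T)$ is not literally killed by $T$ in one step, I can first conjugate $T$ into block upper-triangular form with the nilpotent part $N$ on $N(T)$ and the invertible part on $C$, and then argue that the nilpotent block can be stripped off by a sequence of elementary shift equivalences, each peeling away one Jordan layer of the nilpotent part where that layer sits in the kernel of the residual map.

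The main obstacle I anticipate is precisely this last point: Proposition \ref{ker} as stated applies only when $U$ lies in the kernel of $X$, i.e. when $T$ is literally zero on a direct summand, whereas a general $T$ restricted to $N(T)$ is merely nilpotent, not zero. The honest way to bridge this gap is to peel the nilpotent part off one Jordan step at a time: if $N$ is nilpotent of index $k$ on the summand, then $\ker N$ is a genuine summand annihilated by the top of the filtration, and after an elementary equivalence that quotients it out one is left with a nilpotent part of smaller index, so induction on the nilpotency index finishes the argument. I would need to set up the bookkeeping so that at each stage the endomorphism splits as a block with a genuine kernel summand to which Proposition \ref{ker} literally applies, and verify that the invertible block on $C$ is carried along unchanged throughout; this filtration argument is the technical heart, while everything else is routine.
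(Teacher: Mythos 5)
Your proof is correct, and it diverges from the paper in an instructive way on the forward direction. For the only-if implication the paper simply cites \cite[Theorem 7.4.6]{LM} as well known, whereas you give a self-contained argument: the identity $(RS)^kR = R(SR)^k$ shows $R(N(SR))\subseteq N(RS)$ and symmetrically for $S$, so $R,S$ descend to $\bar R,\bar S$ with $\bar S\bar R = T_\flat$ and $\bar R\bar S = T'_\flat$ both invertible, forcing $\bar R,\bar S$ to be isomorphisms and yielding $T'_\flat = \bar R\,T_\flat\,\bar R^{-1}$; this is a genuine proof where the paper offers only a reference, and it buys independence from the symbolic-dynamics literature. For the converse your route and the paper's are essentially the same mechanism (similar $\Rightarrow$ SSE, plus stripping $N(T)$ via Proposition \ref{ker}), but the paper's bookkeeping is lighter than what you anticipate: it quotients by a single nonzero vector $x\in\ker T$ at a time, taking $U=\langle x\rangle$ in Proposition \ref{ker}, and simply repeats---after each quotient new kernel vectors appear and the process terminates precisely at $V/N(T)$. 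Your layer-at-a-time version also works, and the ``technical heart'' you worry about is in fact immediate: writing $V = N(T)\oplus C$ with $C$ the sum of generalized eigenspaces for nonzero eigenvalues, the decomposition is block \emph{diagonal} (not merely upper triangular, since $C$ is $T$-invariant), and $\ker N = \ker T$ exactly because $T|_C$ is invertible, so Proposition \ref{ker} applies literally at every stage of your induction on the nilpotency index. So no gap---only the observation that the filtration argument you flag as delicate collapses to the paper's one-vector iteration.
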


\begin{proof}  The only if implication   is well-known \cite[Theorem 7.4.6]{LM}.   The if implication follows from the  easy observations that similar  transformations are strong shift equivalent and that $T$ is strong shift equivalent to $T_\flat$.
This second fact follows from the repeated use of the following observation: If  $x\ne 0$ is in the null space of $T$,   $<x>$ denotes the space spanned by $x$, and $T_x$ denotes  the induced map on $V/<x>$, then $T$ and $T_x$ are  strong shift equivalent. This follows from Proposition \ref{ker} with $U= <x>$.
\end{proof}


\section{The tangle morphism}
\label{thetanglemorphism}
In this section, 
we will assign a $k_p$-module homomorphism to any framed tangle in $S^2\times I$  
enhanced with an embedded $p$-admissibly colored trivalent fat graph in the complement of the tangle. By slicing a surgery presentation for an infinite cyclic cover of an extended $3$-manifold and applying the TQFT, we obtain such a tangle, and thus a 
$k_p$-module endomorphism.
The idea of constructing this endomorphism is inspired by the work of Ohtsuki in \cite{O}.

There is a unique lagrangian for a 2-sphere.  Thus we can consider any 2-sphere as an extended manifold without specifying a lagrangian.
Similarly,  we let  $(S^2\times I,r)$ denote the extended manifold $S^2 \times I$ with weight $r$, as there is no need to specify a lagrangian.

\begin{de}
Let $S$ be a 2-sphere  equipped with $m$ ordered uncolored banded points, and $u$ ordered banded points colored by $x_1,\cdots,x_u$.
We define $S(i_1, i_2, \cdots i_m)$ to be this 2-sphere where the  $m$ uncolored banded points
have been colored by \linebreak $(i_1, i_2, \cdots i_m)$ (and the $u$ points already colored remain colored).

We define
\begin{equation}
V(S)=\sum_{i_1,...,i_m} V(S(i_1, i_2, \cdots i_m)).
\notag
\end{equation}
Here $V(S(i_1, i_2, \cdots i_m))$ is the module for a extended 2-sphere with  $m$ uncolored  banded points colored by $(i_1,\cdots,i_m)$ and $u$ banded points colored by $(x_1,\cdots,x_u)$ obtained by applying the TQFT that we introduced in \S  \ref{preliminary}.
\end{de}

By an $(m,n)$-tangle in $(S^2\times I,r)$, we mean  
  a properly embedded framed 1-manifold   in $(S^2\times I,r)$ with $m$ endpoints on $S_0=\sto$, $n$ points on $S_1=\stl$, with possibly some black dots on its components and a (possibly empty) colored trivalent fat graph (in the complement of the 1-manifold)  meeting $S_0$ in  $u$ colored points  $x_1, \cdots, x_u$   and meeting  $S_1$ in  $t$ colored points 
  $y_1,\cdots,y_t$. Thus $S_0$ is a 2-sphere with $m$ ordered uncolored  banded points and $u$ colored banded points. Similarly $S_1$ is a 2-sphere with $n$ ordered uncolored banded points and $t$ colored banded points. 
For any $(m,n)$-tangle, we will define a homomorphism from $\vsm$ to $\vsn$. 

Before doing that, we introduce some definitions.  From now on, we will not explicitly mention the banding on the selected points of a surface
or the framing of a tangle, or the fattening of a trivalent graph. Each comes equipped with
such and the framing/fattening of a link/graph induces the banding on its boundary points. Nor will we mention the ordering chosen for uncolored  sets of points.

\begin{de}
\label{tangle}
Suppose we have a $(m,n)$-tangle in $(S^2\times I,r)$ with a colored trivalent graph  with $u$ edges colored by $x_1, \cdots, x_u$ 
meeting $\sto$ and $t$ edges colored by $y_1,\cdots,y_t$ meeting $\stl$.
Suppose we color the $m$ endpoints from the tangle on $S_0=\sto$ by $i_1,\cdots,i_m$ and color the $n$ endpoints from the tangle on $S_1=\stl$ by $j_1,\cdots,j_n$.
We say that the coloring $(i_1,\cdots,i_n,j_1,\cdots,j_m)$ is legal if the two endpoints of the same strand have the same coloring.
We denote the tangle with the endpoints so-colored by $T^{n,(j_1,...,j_n)}_{m,(i_1,...,i_m)}$.
For an example, see Figure \ref{colorings}.
\end{de}
\begin{figure}[h]
\includegraphics[width=1in,height=1.25in]{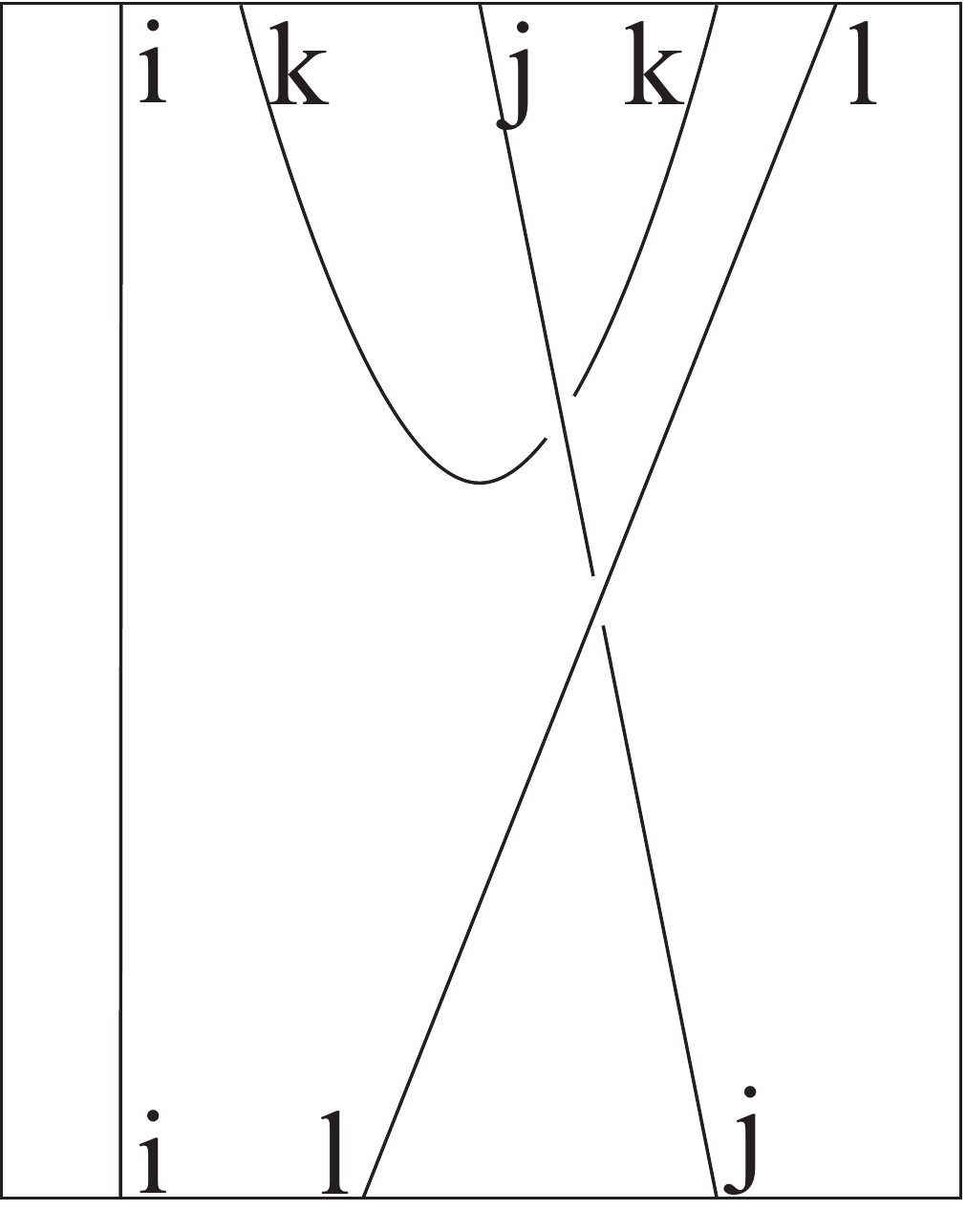}\hskip 0.5in
\includegraphics[width=1in,height=1.25in]{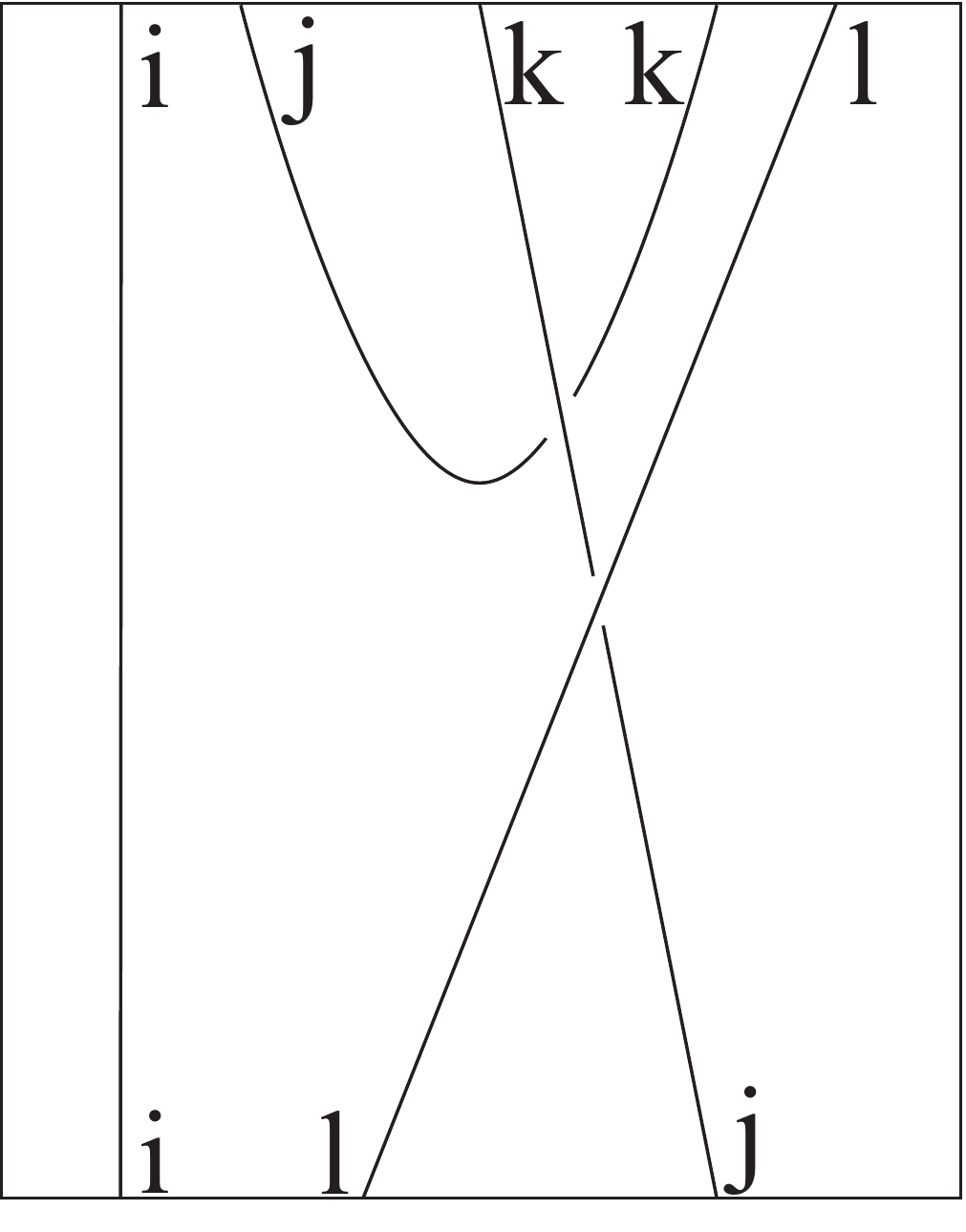}
\caption{The coloring in first diagram is a legal coloring and the one in the second diagram is a illegal coloring for $k\neq j$.
In this example, the colored trivalent graph is empty.}
\label{colorings}
\end{figure}

\begin{de}
\label{componenttanglemap}
Suppose we have $T^{n,(j_1,...,j_n)}_{m,(i_1,...,i_m)}$, a $(i_1,\cdots,i_m,j_1,\cdots,j_n)$ colored $(m,n)$-tangle as in Definition \ref{tangle}.
We define a homomorphism
\begin{equation}
\vsim\stackrel{\tnmij}{\longrightarrow}\vsjn
\notag
\end{equation}
as follows.
\begin{itemize}
\item If $(i_1,\cdots,i_m,j_1,\cdots,j_n)$ is a illegal coloring. 
We take the homomorphism to be the zero homomorphism.
\item If $(i_1,\cdots,i_m,j_1,\cdots,j_n)$ is a legal coloring.
We decorate uncolored components of the tangle by some skeins in $2$ cases:
\begin{enumerate}
\item If there are $l$ black dots on the component, $l\in\{0,1,2,\cdots\}$, 
and the component has two endpoints with color $k$, $k\in\{i_1,\cdots,i_m,j_1,\cdots,j_n\}$,
then we decorate the component by $\Lambda^{(l)}_k$.

\item If there are $l$ black dots on the component, $l\in\{0,1,2,\cdots\}$, 
and the component lies entirely in $S^2\times (0,1)$,
then we decorate the component by $\omega^{(l)}$.
\end{enumerate}
Then we apply $Z$ to $(S^2\times I,r)$ with the tangle $\tnm$, so decorated,   to get the morphism $\tnmij$.
\end{itemize}
\end{de}

Now we are ready to define the homomorphism for a tangle $\tnm$.

\begin{de}
\label{tanglemap}
Suppose we have a $(m,n)$-tangle $\tnm$.
We define the homomorphism for the tangle, denoted by $\ztnm$, to be
\begin{equation}
\vsm\stackrel{\sum\tnmij}{\longrightarrow}\vsn
\notag
\end{equation}
where $\tnmij$ is as in Definition \ref{componenttanglemap} and the sum runs over all colorings $(i_1,\cdots,i_m,j_1,\cdots,j_n)$.
\end{de}

\begin{prop}
\label{functorproposition}
For a tangle $T_1$ in $(S^2\times I,r)$ and a tangle $T_2$ in $(S^2\times I,s)$,
we have
\begin{equation}
Z(T_2\circ T_1)=Z(T_2)Z(T_1),
\notag
\end{equation}
where $T_2\circ T_1$ in $(S^2\times I,r+s)$ means gluing $T_2$ on the top of $T_1$. 
Here, of course, we assume that the top of $T_1$  and the bottom  of $T_2$ agree.  
\end{prop}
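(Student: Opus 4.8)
The plan is to deduce this from the functoriality (gluing axiom) of the underlying extended TQFT $(V,Z)$, together with a bookkeeping of how the coloring-sums and skein-decorations of Definitions \ref{componenttanglemap} and \ref{tanglemap} behave under gluing. Write $T_1$ as an $(m,n)$-tangle and $T_2$ as an $(n,n')$-tangle, and let $S_0$, $S_{\mathrm{mid}}$, $S_{\mathrm{top}}$ denote respectively the bottom sphere of $T_1$, the common sphere along which we glue (the top of $T_1$ and the bottom of $T_2$, carrying $n$ uncolored points and the matching colored graph points), and the top sphere of $T_2$. Both $Z(T_2\circ T_1)$ and $Z(T_2)Z(T_1)$ are $k_p$-linear maps $V(S_0)\to V(S_{\mathrm{top}})$, and since each of these modules is the direct sum of its colored summands, it suffices to compare, for each choice of boundary colors $(i_1,\dots,i_m)$ and $(l_1,\dots,l_{n'})$, the block from $V(S_0(i_1,\dots,i_m))$ to $V(S_{\mathrm{top}}(l_1,\dots,l_{n'}))$.

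By Definition \ref{tanglemap}, this block of $Z(T_2)Z(T_1)$ is $\sum_{(j)} Z(T_2^{(l),(j)})\circ Z(T_1^{(j),(i)})$, the sum running over colorings $(j)=(j_1,\dots,j_n)$ of the $n$ points of $S_{\mathrm{mid}}$; the intermediate module $V(S_{\mathrm{mid}}(j))$ is common to both factors because the two tangles are glued along the same decorated sphere. For each fixed $(j)$ I would invoke the gluing axiom of the extended TQFT: the extended gluing of $(S^2\times I,r)$ and $(S^2\times I,s)$ along $S_{\mathrm{mid}}$ produces $(S^2\times I,r+s)$, where the Maslov correction of Definition \ref{extendedgluing} vanishes since $H_1(S^2)=0$ forces all the relevant lagrangians, and hence $\mu$, to be zero. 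Thus $Z(T_2^{(l),(j)})\circ Z(T_1^{(j),(i)})$ equals $Z$ applied to the glued cobordism carrying the two decorated, color-matched tangles.

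The essential point is then to check that, after summing over $(j)$, these glued decorated tangles reassemble into $T_2\circ T_1$ with exactly the decoration prescribed by Definition \ref{componenttanglemap}. Each component $C$ of $T_2\circ T_1$ is a concatenation of arcs coming from $T_1$ and $T_2$, and a legal coloring $(j)$ restricts to a color at each crossing of $C$ with $S_{\mathrm{mid}}$. I would split into the two cases of Definition \ref{componenttanglemap}. If $C$ has its endpoints on the outer boundary, its color $k$ is already fixed by $(i),(l)$; legality forces every middle crossing of $C$ to carry this same $k$, and the arcs carry idempotents $\Lambda^{(l_1)}_k,\dots,\Lambda^{(l_r)}_k$ whose product is $\eta^{l_1+\cdots+l_r}\Delta_k^{l_1+\cdots+l_r}f_k=\Lambda^{(l)}_k$, since $f_k^2=f_k$ and $l=l_1+\cdots+l_r$ is the total number of black dots on $C$. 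If instead $C$ is a closed interior loop, the color $k$ at its middle crossings is a free summation index, and for each $k$ the arcs contribute $\eta^{l}\Delta_k^{l}$ times the closed idempotent $e_k$; summing over $k$ yields precisely $\sum_k\eta^l\Delta_k^l e_k=\omega^{(l)}$.

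Collecting these observations shows that the free part of the sum over $(j)$ exactly produces the $\omega^{(l)}$-decorations on the interior loops, while the constrained part contributes nothing beyond the forced colors on the through-components; hence the $((l),(i))$-block of $Z(T_2)Z(T_1)$ equals that of $Z(T_2\circ T_1)$, and the proposition follows. I expect the main obstacle to be the combinatorial bookkeeping in this last step: organizing the single sum over middle colorings $(j)$ as an independent product of choices over the components of $T_2\circ T_1$, so that the forced colors and the free $\omega$-sums genuinely decouple, and verifying in all configurations — through-strands, turn-backs, and loops crossing $S_{\mathrm{mid}}$ several times — that the splitting of black dots and the products of idempotents combine to give exactly $\Lambda^{(l)}_k$ or $\omega^{(l)}$. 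The analytic input, functoriality of $(V,Z)$ under extended gluing, is inherited from the construction of \cite{BHMV} adapted to extended manifolds and is not itself in question.
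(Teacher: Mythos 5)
Your proposal is correct and takes essentially the same route as the paper, whose entire proof is the one-line appeal to functoriality of the underlying TQFT; your coloring-and-decoration bookkeeping (legality forcing the color along through-strands, the free middle-color sum on closed loops reassembling $\omega^{(l)}=\sum_k\eta^l\Delta_k^l e_k$, and $f_k^2=f_k$ collecting the black-dot scalars into $\Lambda^{(l)}_k$) is precisely the routine verification the paper leaves implicit. In both arguments the only analytic input is the extended gluing axiom, with the Maslov correction vanishing because $H_1(S^2)=0$, so the weights add to $r+s$ as required.
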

\begin{proof}
This follows from  the functoriality  of the original TQFT. 
\end{proof}

Now we can construct tangle endomorphisms for an extended closed $3$-manifold with an embedded colored trivalent graph, and choice of infinite cyclic cover.
Given $((M,r),\chi,G')$,
we choose a surgery presentation $(D_0,L,s,G)$.
We put one black dot somewhere on each component of $L$ away from $D_0$.
By doing a $0$-surgery along $K_0$, we obtain $(S^2\times S^1,s)$ with link $L$ and trivalent graph $G$, where  $D_0$ can be completed to $\stp$ for some
 point $p$ on $S^1$.  We cut $S^2\times S^1$ along $\stp$ .
Then we obtain a tangle $\mathcal{T}^n_n$ in $(S^2\times I,s)$. 
Here  $n= |\mathcal{T}^n_n\cap (\stl)| = |\mathcal{T}^n_n \cap (\sto)|$. 
 Let $Z(\mathcal{T}^n_n)$ denote tangle endomorphism associated to $\mathcal{T}^n_n$.

\begin{lem}
\label{blackdots} If $\mathcal{T}^n_n$ is constructed as above, then
the SSE class of $Z(\mathcal{T}^n_n)$ is independent of the positioning of the black dots.
\end{lem}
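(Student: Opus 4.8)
The plan is to show that moving a single black dot along its component of $L$ changes $Z(\mathcal{T}^n_n)$ only within its SSE class, and then to invoke transitivity of SSE. The first step is to record a re-cutting principle: if $W$ denotes \sts\ together with $L$ (carrying its dots) and $G$, as obtained by $0$-surgery along $K_0$, then the SSE class of the tangle endomorphism produced by cutting $W$ along \stp\ does not depend on the generic choice of $p\in S^1$. Indeed, given two cut points, write $W$ as a union of two slabs $A$ and $B$ meeting along the two cutting spheres. Cutting at the first point presents the tangle as $B\circ A$, while cutting at the second presents it as $A\circ B$; by Proposition \ref{functorproposition} the two endomorphisms are $Z(B)Z(A)$ and $Z(A)Z(B)$. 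With $R=Z(A)$ and $S=Z(B)$ these are $SR$ and $RS$, hence elementarily strong shift equivalent. Since $2$-spheres carry a unique lagrangian, the Maslov correction in Definition \ref{extendedgluing} vanishes and the weights simply add, so this decomposition is legitimate.

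Next I would reduce the movement of a dot to elementary steps. Fixing the cut at $p$, I move the chosen dot along its component $L_i$ in small increments, each of one of two types: (a) the dot slides along a single strand of $\mathcal{T}^n_n$ without crossing \stp, or (b) the dot slides across \stp\ at one transverse intersection point $q$ of $L_i$ with \stp. A type (a) move is an ambient isotopy of the tangle fixing $\sto\cup\stl$ pointwise, so by the isotopy invariance of the TQFT it leaves $Z(\mathcal{T}^n_n)$ literally unchanged; recall from Definition \ref{componenttanglemap} that the decoration $\Lambda^{(l)}_k$ (or $\omega^{(l)}$) depends only on the \emph{number} of dots on a strand, not their location.

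The heart of the argument is type (b). Let the two configurations be the dot just before and just after it crosses \stp\ at $q$; they differ by an isotopy of $W$ supported in an arbitrarily small neighborhood $N$ of $q$. Choose a second cut point $p^*$ far from $p$, so that $S^2\times\{p^*\}$ is disjoint from $N$. In the tangle obtained by cutting at $p^*$, both configurations place the dot on the \emph{same} strand (the one meeting $N$), and the supporting isotopy becomes a type (a) move for this new cut; hence the two endomorphisms obtained by cutting at $p^*$ are equal. Applying the re-cutting principle to each configuration to pass between the cut at $p$ and the cut at $p^*$, and chaining these equivalences with the above equality, shows that the two type (b) configurations yield SSE endomorphisms when cut at $p$. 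Transitivity of SSE over all the elementary steps then proves the lemma.

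The main obstacle I anticipate is the bookkeeping in the type (b) step: one must verify that after re-cutting at $p^*$ the two nearby dot positions genuinely lie on a common strand, that the connecting isotopy is boundary-fixing for the new cut, and that the decoration data (which strand carries the extra superscript) transforms as claimed. The re-cutting principle together with isotopy invariance is precisely what converts ``crossing the cutting sphere'' into a harmless within-strand slide, so once that principle is established the remaining verifications are routine.
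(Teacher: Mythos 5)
Your proposal is correct and follows essentially the same route as the paper: the paper's proof likewise rests on the observation that the decoration depends only on the number of dots on each strand, and it transfers the dot across the cutting sphere by factoring $\mathcal{T}^n_n=S\circ T$ with $T$ a trivial slab carrying the dot and invoking $Z(S)Z(T)\approx Z(T)Z(S)$ via Proposition \ref{functorproposition} --- which is exactly your re-cutting principle in the special case where the new cut point is chosen just past the dot. Your type~(b) bookkeeping with the auxiliary cut at $p^*$ is a slightly more formal packaging of this same cyclic-permutation argument (the one the paper also uses for the $D_0$-move in Lemma \ref{transforminvariant}), not a different method.
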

\begin{proof}
By definition, we can move a black dot on the component of the tangle $\mathcal{T}^n_n$ anywhere without changing the tangle endomorphism $Z(\mathcal{T}^n_n)$.
We move the black dot to near bottom or near top and cut the  tangle $\mathcal{T}^n_n$ into two tangles $S$ and $T$,
where $T$ is a trivial tangle with the black dot.
For an example, see Figure \ref{blackdot}.
Then we switch the position of $S$ and $T$ and move the black dot in resulting tangle to near the other end of that component.
Then we do the process again.
By doing this, we can move it to any arc of the tangle $\mathcal{T}^n_n$, 
which belongs to the same component of the link $L$.
But for each step, $Z(ST)=Z(S)Z(T)$ is strong shift equivalent to $Z(TS)=Z(T)Z(S)$.
Therefore, the lemma is true.
\end{proof}
\begin{figure}[h]
\includegraphics[width=1.2in, height=1.2in]{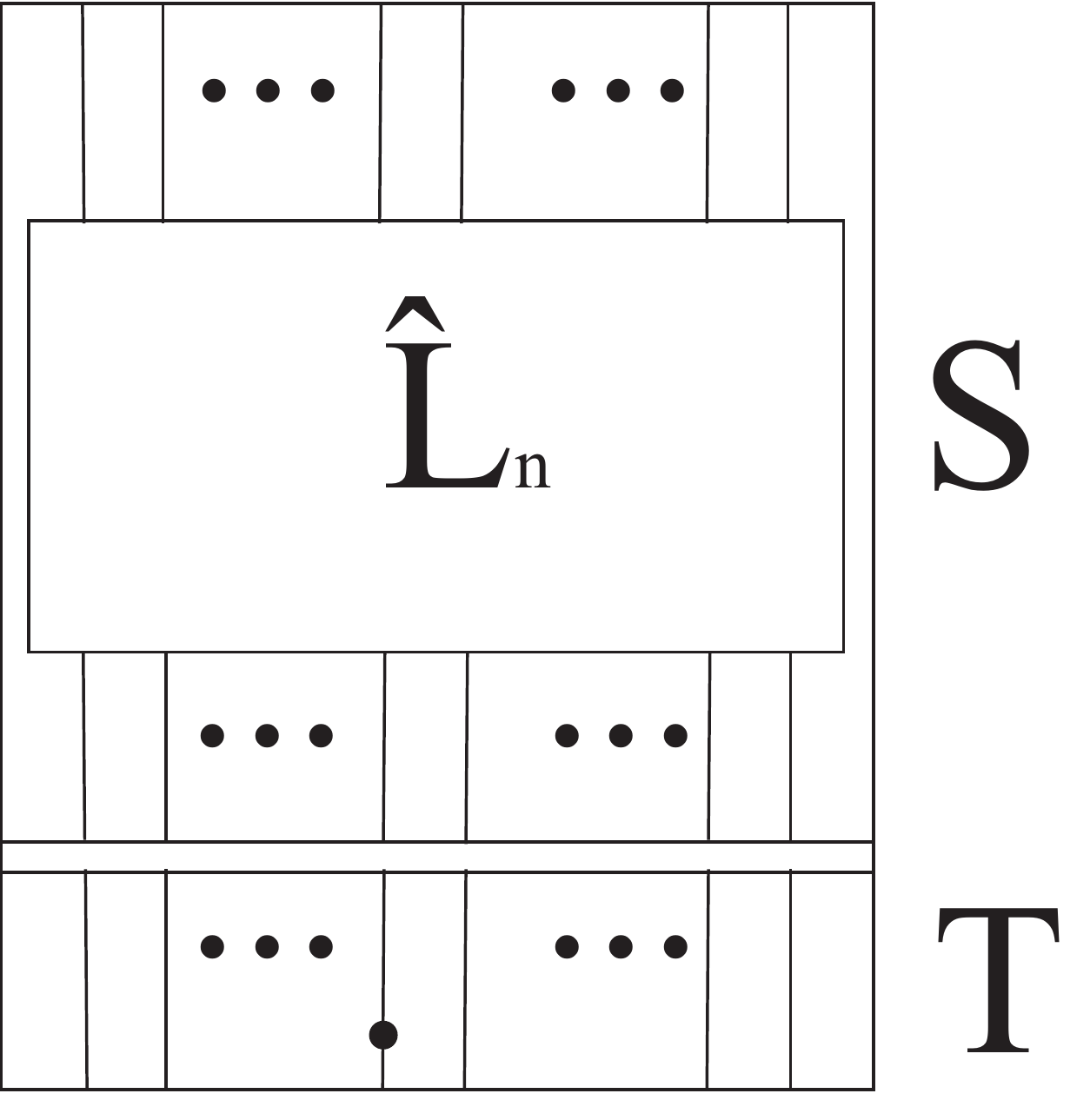}
\includegraphics[width=0.3in, height=1.2in]{harrow.pdf}
\includegraphics[width=1.2in, height=1.2in]{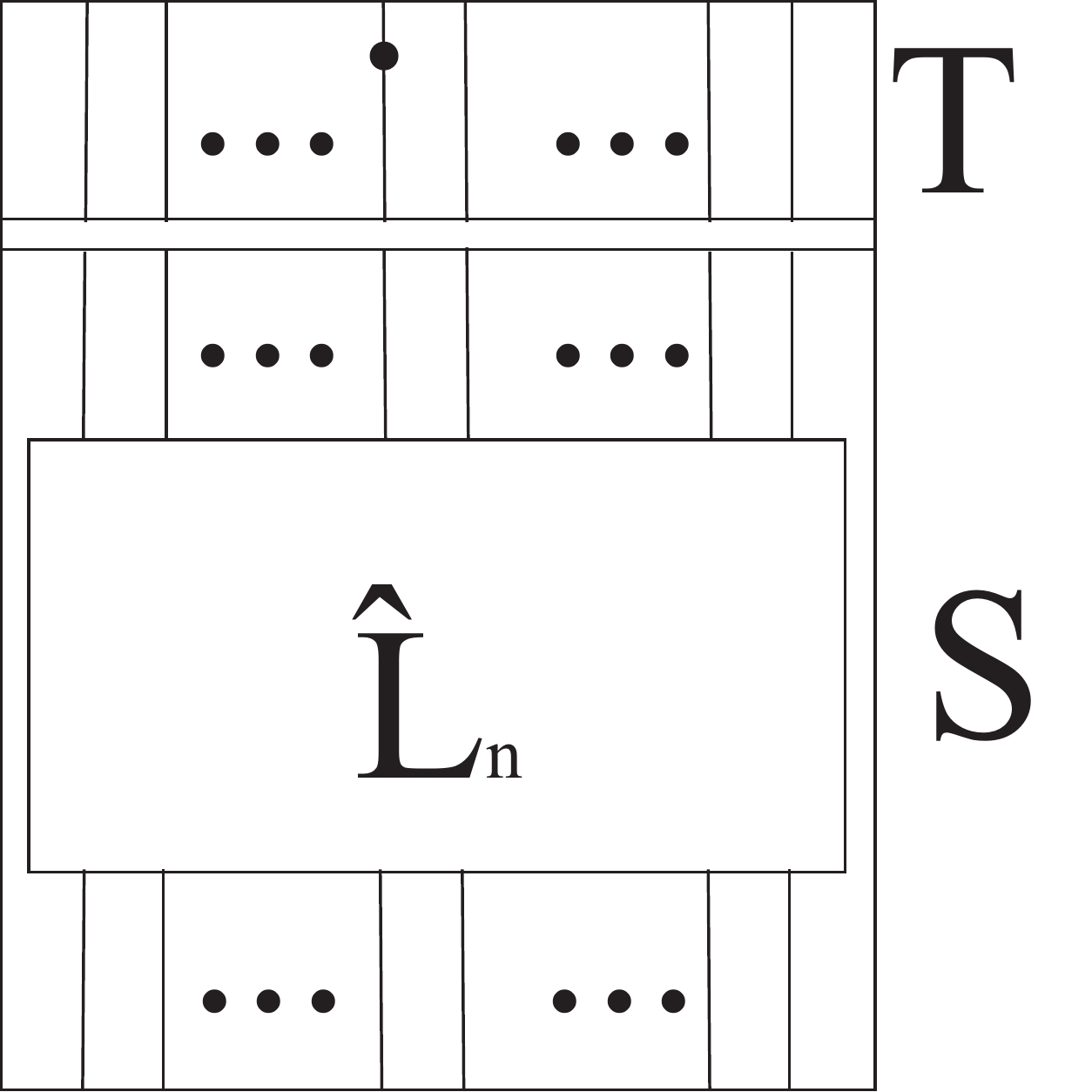}
\caption{Here $T$ is the trivial part with the black dot.}
\label{blackdot}
\end{figure}

Thus the SSE class of the tangle  endomorphism $Z(\mathcal{T}^n_n)$ constructed as above depends only on a surgery presentation $(D_0,L,s,G)$.
Thus we can denote this class by $Z(D_0,L,s,G)$.

\begin{thm}
\label{main}
Let $(D_0,L_1,s_1,G_1)$ and $(D_0,L_2,s_2,G_2)$ be  two surgery presentations for  $((M,r),\chi,G')$,
an extended closed $3$-manifold with  an embedded  colored trivalent graph, and choice of infinite cyclic cover.
Then $$Z(D_0,L_1,s_1,G_1)=Z(D_0,L_2,s_2,G_2).$$
Thus we may denote this SSE class by $Z((M,r),\chi,G')$.
\end{thm}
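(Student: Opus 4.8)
The plan is to import two results and then reduce the theorem to a local check. The first imported result is that, for a \emph{standard} surgery presentation, the tangle endomorphism is SSE to the associated Turaev-Viro endomorphism; I would establish this in Section~\ref{relation}. The second is that the SSE class $\mathcal{Z}(M,\chi,G')$ of the Turaev-Viro endomorphism is already known to be an invariant of $((M,r),\chi,G')$, by the graph-enhanced version of the argument in \cite{G3}. Granting these, it remains to prove that the SSE class of the tangle endomorphism is unchanged under each of the four operations that, by Proposition~\ref{standardization}, suffice to carry any surgery presentation to standard form: isotopy of $L\cup G$ relative to $D_0$, small Kirby-$1$ moves, small Kirby-$2$ moves, and $D_0$-moves. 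Given this, if $(D_0,L_i^{\mathrm{std}},\ldots)$ denotes a standard form of $(D_0,L_i,s_i,G_i)$, then $Z(D_0,L_i,s_i,G_i)=Z(D_0,L_i^{\mathrm{std}},\ldots)$, and the two imported facts give $Z(D_0,L_i^{\mathrm{std}},\ldots)\sim\mathcal{Z}(M,\chi,G')$ for $i=1,2$; transitivity of SSE then forces $Z(D_0,L_1,s_1,G_1)=Z(D_0,L_2,s_2,G_2)$.

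The heart of the argument is the invariance under the four moves, and functoriality (Proposition~\ref{functorproposition}) is the main tool throughout. An isotopy of $L\cup G$ rel $D_0$ carries the cut-open tangle in $S^2\times I$ to an ambiently isotopic tangle fixing the two end spheres, so $Z(\mathcal{T}^n_n)$ is literally unchanged. For a $D_0$-move I would use the cyclic symmetry of closing up (Lemma~\ref{closure}): cutting $S^2\times S^1$ along the sphere completing the new disk rather than the old one expresses the resulting endomorphism as $Z(B)Z(A)$ where the original is $Z(A)Z(B)$, and these are elementarily SSE by definition, exactly as in the proof of Lemma~\ref{blackdots}. For a small Kirby-$1$ move the created $\pm1$-framed unknot bounds a disk disjoint from $D_0$ and from the remainder of the diagram, so after an isotopy rel $D_0$ it lies in a thin slab meeting nothing else; slicing just above and below and applying Proposition~\ref{functorproposition}, its only effect is to multiply the endomorphism by $\langle U_\pm(\omega)\rangle=\kappa^{\pm1}$, which is cancelled exactly by the factor $\kappa^{\mp1}$ produced by the simultaneous weight change by $\mp1$, leaving $Z$ unchanged. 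A small Kirby-$2$ move can likewise be localized in a slab by functoriality, where it is precisely the handle-slide property of the Kirby color $\omega$ decorating the component one slides over, so the endomorphism is again unchanged.

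The step I expect to be the main obstacle is this last local bookkeeping for the Kirby moves: one must confirm that the skein decorations $\omega^{(l)}$ and $\Lambda^{(l)}_k$ placed on the various components, the black-dot convention (whose irrelevance is controlled by Lemma~\ref{blackdots}), and the weight changes all interact so that the small Kirby moves preserve $Z$ on the nose, not merely up to a unit. A secondary subtlety is that the imported invariance of $\mathcal{Z}$ and the comparison of Section~\ref{relation} must both be available in the presence of the colored trivalent graph $G'$, so that the reduction is legitimate in the general stated setting rather than only for knots in $S^3$.
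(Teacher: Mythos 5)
Your proposal is correct and follows essentially the same route as the paper: reduce to standard form via Proposition~\ref{standardization}, check invariance of the tangle endomorphism under the four moves (the paper's Lemma~\ref{transforminvariant}, including the same $\kappa^{\pm1}$/weight cancellation for small Kirby-$1$ moves, the handle-slide property of $\omega$ for small Kirby-$2$ moves, and the elementary SSE $Z(T_2)Z(T_1)\sim Z(T_1)Z(T_2)$ for $D_0$-moves), and then identify standard-form tangle endomorphisms with the Turaev-Viro endomorphism (Theorem~\ref{TVisT}), whose SSE class is invariant by Proposition~\ref{invariant}. The two subtleties you flag are exactly the points the paper addresses in Lemma~\ref{transforminvariant} and in the remark at the start of Section~\ref{TVM} that the discussion carries over in the presence of the colored graph.
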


This theorem will be proved in section \ref{PMT}, after the way has been prepared in sections  \ref{TVM} and, \ref{relation}.

\section{The Turaev-Viro endomorphism}
\label{TVM}

In \S  \ref{introduction}, we introduced the basic idea of the Turaev-Viro endomorphism.
In this section, we will include the technical  details.

\begin{rem}
The discussion in this section and the next  section works for $3$-manifolds with an embedded $p$-admissibly colored trivalent graph.
For simplicity, we usually omit mention of  the trivalent graph. Thus we will write $((M,r), \chi)$ instead of $((M,r), \chi, G')$. 
This is according to the philosophy that we should think of the  colored trivalent graph $G'$ as simply some extra structure on $M$.
\end{rem}

\begin{lem}
\label{lagrangianchange}
Let $(M,r,\lambda(\partial M)_1)$ be an extended cobordism from $(\Sigma,\lambda(\Sigma)_1)$ to itself and $(M,r,\lambda(\partial M)_2)$ be an extended cobordism from $(\Sigma,\lambda(\Sigma)_2)$ to itself.
Then $Z((M,r,\lambda(\partial M)_1))$ is strong shift equivalent to $Z((M,r,\lambda(\partial M)_2))$.
\end{lem}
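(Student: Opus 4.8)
The goal of Lemma \ref{lagrangianchange} is to show that changing only the lagrangian data on the boundary of a self-cobordism changes the induced endomorphism only up to strong shift equivalence. My plan is to compare the two endomorphisms $Z((M,r,\lambda(\partial M)_1))$ and $Z((M,r,\lambda(\partial M)_2))$ by inserting an ``interpolating'' cobordism that mediates between the two lagrangian structures on $\Sigma$. Concretely, I would consider the product cobordism $\Sigma\times[0,1]$, equipped with weight $0$, viewed as a morphism from $(\Sigma,\lambda(\Sigma)_1)$ to $(\Sigma,\lambda(\Sigma)_2)$. Applying the TQFT yields a homomorphism $P\colon V(\Sigma,\lambda(\Sigma)_1)\to V(\Sigma,\lambda(\Sigma)_2)$, together with the reverse cylinder giving $Q\colon V(\Sigma,\lambda(\Sigma)_2)\to V(\Sigma,\lambda(\Sigma)_1)$.

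The key structural input is that gluing these cylinders onto the two ends of $(M,r,\lambda(\partial M)_i)$ recovers the \emph{same} underlying $3$-manifold $M$, but with the lagrangian and weight data adjusted by the Maslov-index correction of Definition \ref{extendedgluing}. Since $\Sigma\times[0,1]$ carries weight $0$ and the Maslov index contributes an integer, the glued manifolds differ from the originals at most by an overall power of $\kappa$; one must check these weight corrections cancel when both cylinders are attached (one to the source, one to the target). The two elementary strong shift equivalences I would extract are then, schematically,
\begin{equation}
Z((M,r,\lambda(\partial M)_1)) \approx Q\,Z((M,r,\lambda(\partial M)_2))\,P
\notag
\end{equation}
obtained by writing the left-hand endomorphism as a composite and regrouping as $R=Z((M,\cdot))P$ and $S=Q$, so that $SR$ and $RS$ give the two ends of the equivalence. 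Here I would invoke Lemma \ref{closure} to justify that the cyclic reordering of the glued pieces — gluing a cylinder before versus after closing up — produces the same extended $3$-manifold, which is exactly what licenses passing between $SR$ and $RS$.

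The main obstacle I anticipate is bookkeeping the weights correctly: the cylinders $\Sigma\times[0,1]$ with the two different lagrangian boundary identifications will, upon gluing, introduce Maslov-index terms $\mu(\lambda_M(\Sigma),\lambda(\Sigma),\lambda_{M'}(\bar{\Sigma}))$ that must be shown to match on both sides so that the resulting endomorphisms genuinely realize an SSE rather than differing by a spurious scalar $\kappa^k$. A clean way to handle this is to note that a scalar multiple of an endomorphism by an invertible element is similar to it when that scalar is absorbed into one of the intertwining maps $R$ or $S$; since $\kappa$ is invertible in $k_p$, any residual weight discrepancy can be pushed into $P$ or $Q$ without affecting the SSE class. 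I would also verify that $P$ and $Q$ are honest $k_p$-module maps between free finitely generated modules, so that Definition of elementary SSE applies verbatim. With the weight cancellation in hand, composing the elementary equivalences through the chain $Z((M,r,\lambda(\partial M)_1))\approx \cdots \approx Z((M,r,\lambda(\partial M)_2))$ completes the argument.
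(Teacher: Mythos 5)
Your overall germ---interpolate between the two lagrangian structures with cylinders and pass between $SR$ and $RS$---is the same as the paper's, but your specific decomposition does not close, and the step you propose to close it with is invalid. With $R=Z((M,r,\lambda(\partial M)_2))P$ and $S=Q$ you get $RS=Z((M,r,\lambda(\partial M)_2))PQ$ and $SR=Q\,Z((M,r,\lambda(\partial M)_2))\,P$. Granting $PQ=\mathrm{id}$ (true, since the Maslov index of a triple with a repeated lagrangian vanishes---a point you should also verify), you still must prove the exact equality $Q\,Z((M,r,\lambda(\partial M)_2))\,P=Z((M,r,\lambda(\partial M)_1))$. By Definition \ref{extendedgluing}, that composite is $Z((M,r-c,\lambda(\partial M)_1))=\kappa^{-c}Z((M,r,\lambda(\partial M)_1))$, where $c$ is a sum of two Maslov indices whose entries include the lagrangians induced through $M$ on its boundary; for a general $M$ (e.g.\ a mapping cylinder of a surface homeomorphism) these triples are pairwise distinct lagrangians and the terms are typically nonzero, and nothing in your argument shows $c=0$. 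Your fix---``absorb'' the residual $\kappa^{c}$ into $P$ or $Q$---is the genuine gap: rescaling any one intertwiner by a unit multiplies \emph{both} products $SR$ and $RS$ by that same unit, so it can move the scalar from one side of the elementary equivalence to the other but can never cancel it. Indeed, since trace is an SSE invariant, $Z_1\sim\kappa^{c}Z_1$ would force $(\kappa^{c}-1)\Trace(Z_1)=0$, which fails in general.

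The paper's proof is structured precisely so that no scalar ever appears, and it uses only \emph{one} cylinder. It re-reads the same weighted manifold with the \emph{mixed} lagrangian, $(M,r,\lambda(\Sigma)_2\oplus\lambda(\Sigma)_1)$, as a morphism $(\Sigma,\lambda(\Sigma)_2)\to(\Sigma,\lambda(\Sigma)_1)$, takes the single cylinder $(\Sigma\times I,0,\lambda(\Sigma)_1\oplus\lambda(\Sigma)_2)$ as the other intertwiner, and asserts the identities of extended manifolds (weights included)
\begin{equation}
(M,r,\lambda(\partial M)_1)=(\Sigma\times I,0,\lambda(\Sigma)_1\oplus\lambda(\Sigma)_2)\cup_{(\Sigma,\lambda(\Sigma)_2)}(M,r,\lambda(\Sigma)_2\oplus\lambda(\Sigma)_1),
\notag
\end{equation}
and the analogous one with the two factors composed in the other order giving $(M,r,\lambda(\partial M)_2)$; functoriality then yields $Z_1=SR$ and $Z_2=RS$ on the nose with $R=Z(\text{cylinder})$, $S=Z((M,r,\lambda(\Sigma)_2\oplus\lambda(\Sigma)_1))$, a single elementary SSE. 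Separately, your appeal to Lemma \ref{closure} is misplaced: that lemma compares \emph{closures} (i.e.\ traces), whereas passing between $SR$ and $RS$ is just the definition of elementary SSE together with functoriality and needs no closure statement. To repair your version you would have to carry out the Maslov bookkeeping and show the weight corrections vanish (or match on the two sides)---which is exactly the content your absorption trick skips, and exactly what the paper's choice of mixed lagrangian data is designed to arrange.
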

\begin{proof}
First we notice that
\begin{eqnarray}
&&\lambda(\partial M)_1=\lambda(\Sigma)_1\oplus\lambda(\Sigma)_1\in H_1(\Sigma)\oplus H_1(\bar{\Sigma}),
\notag\\
&&\lambda(\partial M)_2=\lambda(\Sigma)_2\oplus\lambda(\Sigma)_2\in H_1(\Sigma)\oplus H_1(\bar{\Sigma}).
\notag
\end{eqnarray}
Then we have
\begin{eqnarray}
&&(M,r,\lambda(\partial M)_1)\notag\\
&&=(\Sigma\times I,0,\lambda(\Sigma)_1\oplus\lambda(\Sigma)_2)\cup_{(\Sigma,\lambda(\Sigma)_2)}(M,r,\lambda(\Sigma)_2\oplus\lambda(\Sigma)_1),
\notag\\
&&(M,r,\lambda(\partial M)_2)\notag\\
&&=(M,r,\lambda(\Sigma)_2\oplus\lambda(\Sigma)_1)\cup_{(\Sigma,\lambda(\Sigma)_1)}(\Sigma\times I,0,\lambda(\Sigma)_1\oplus\lambda(\Sigma)_2).
\notag
\end{eqnarray}
Here we consider $(M,r,\lambda(\Sigma)_2\oplus\lambda(\Sigma)_1)$ as a cobordism from $(\Sigma,\lambda(\Sigma)_2)$ to $(\Sigma,\lambda(\Sigma)_1)$.
Then by the functoriality of $Z$, we have the conclusion. 
\end{proof}

\begin{lem}
\label{surfacechange}
Suppose we have a closed extended 3-manifold $((M,r),\chi)$ with an infinite cyclic covering.
We obtain two extended fundamental domains $M_1$ and $M_2$ by slicing along two extended surfaces $(\Sigma,\lambda(\Sigma))$ and $(\Sigma',\lambda(\Sigma'))$ which are dual to $\chi$.
We obtain two morphisms
\begin{eqnarray}
&&(M_1,r_1,\lambda(\Sigma)\oplus\lambda(\Sigma)):(\Sigma,\lambda(\Sigma))\rightarrow(\Sigma,\lambda(\Sigma)),
\notag\\
&&(M_2,r_2,\lambda(\Sigma')\oplus\lambda(\Sigma')):(\Sigma',\lambda(\Sigma'))\rightarrow(\Sigma',\lambda(\Sigma')),
\notag
\end{eqnarray}
with weight $r_1,r_2$ respectively such that the closures of both cobordism having weight $r$.
Then 
\begin{equation}
Z((M_1,r_1,\lambda(\Sigma)\oplus\lambda(\Sigma)))\sim Z((M_2,r_2,\lambda(\Sigma')\oplus\lambda(\Sigma')).
\notag
\end{equation}
\end{lem}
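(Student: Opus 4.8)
The plan is to reduce to the situation where $\Sigma$ and $\Sigma'$ are disjoint, and then to recognize the two endomorphisms as an \emph{elementary} strong shift equivalence in the sense of the definition preceding \S\ref{sse}. First I would arrange disjointness. After a small isotopy $\Sigma$ and $\Sigma'$ are transverse, meeting in finitely many circles; since $[\Sigma]=[\Sigma']$ is dual to $\chi$, these intersection circles are (rationally) null-homologous, so one may surger $\Sigma'$ along innermost circles, at each stage replacing it by a dual surface that meets the previous one in strictly fewer circles. Because $\sim$ is transitive, it then suffices to prove the lemma for each consecutive pair in this chain, hence to treat two \emph{disjoint} dual surfaces. (This reduction is standard and is implicit in the invariance argument of \cite{G3}.)

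Assuming now $\Sigma\cap\Sigma'=\emptyset$, cutting the closed manifold $M$ along $\Sigma\sqcup\Sigma'$ yields two cobordisms: $A$ from $(\Sigma,\lambda(\Sigma))$ to $(\Sigma',\lambda(\Sigma'))$ and $B$ from $(\Sigma',\lambda(\Sigma'))$ to $(\Sigma,\lambda(\Sigma))$. Cutting $M$ along $\Sigma$ alone recovers the fundamental domain $M_1=B\circ A$, while cutting along $\Sigma'$ alone recovers $M_2=A\circ B$. Assigning weights $a$ to $A$ and $b$ to $B$, functoriality of $Z$ gives $Z(M_1)=Z(B)\,Z(A)$ and $Z(M_2)=Z(A)\,Z(B)$. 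Setting $R=Z(A)\colon V(\Sigma)\to V(\Sigma')$ and $S=Z(B)\colon V(\Sigma')\to V(\Sigma)$, we obtain $Z(M_1)=SR$ and $Z(M_2)=RS$, so $Z(M_1)\approx Z(M_2)$, and in particular $Z(M_1)\sim Z(M_2)$.

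It remains to reconcile the extended data, since the decomposition $M_1=B\circ A$ forces a particular lagrangian and weight on $\Sigma$ (and similarly on $\Sigma'$) that need not match those appearing in the statement. Here Lemma \ref{lagrangianchange} lets me replace the lagrangian $\lambda(\Sigma)\oplus\lambda(\Sigma)$ prescribed in the statement by the one produced by the cut-and-reglue, without leaving the SSE class; the same applies on $\Sigma'$. For the weights, Lemma \ref{closure} is exactly the bookkeeping device: closing $B\circ A$ along $\Sigma$ and closing $A\circ B$ along $\Sigma'$ both produce the closed extended manifold $(M,r)$, which is precisely the hypothesis that the two closures carry weight $r$. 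This common closure forces a single choice of $(a,b)$ to realize both $M_1$ (with weight $r_1$) and $M_2$ (with weight $r_2$) simultaneously, the difference $r_1-r_2$ being absorbed by the Maslov corrections of the two extended gluings.

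I expect the two genuinely nontrivial points to be: first, the reduction to disjoint surfaces, which needs a careful general-position and surgery argument to build the connecting chain of dual surfaces; and second, checking that the Maslov-index and weight corrections of the extended compositions cancel so that $Z(M_1)$ and $Z(M_2)$ are literally $SR$ and $RS$ for \emph{one} pair $(R,S)$. The first is the main obstacle and is where the homological input about $\chi$ is used; the second is routine given that Lemmas \ref{closure} and \ref{lagrangianchange} are tailored precisely for it.
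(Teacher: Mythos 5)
Your treatment of the disjoint case is essentially the paper's own proof: cut $M$ along $\Sigma\sqcup\Sigma'$ into two pieces $A$ (from $\Sigma$ to $\Sigma'$) and $B$ (from $\Sigma'$ to $\Sigma$), identify $M_1$ with $B\circ A$ and $M_2$ with $A\circ B$, and conclude by an \emph{elementary} strong shift equivalence $Z(M_1)=SR$, $Z(M_2)=RS$. Your weight bookkeeping is also the paper's: Definition \ref{extendedgluing} guarantees integer weights on the pieces realizing $(M_1,r_1)$ (note there is a one-parameter family of such choices, not a forced one), and Lemma \ref{closure}, combined with the hypothesis that both closures carry weight $r$, pins down that the reglued cobordism is exactly $(M_2,r_2,\lambda(\Sigma')\oplus\lambda(\Sigma'))$. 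One small correction here: nothing in the cutting ``forces a particular lagrangian'' on $\Sigma$ or $\Sigma'$ --- the cut pieces carry no preferred lagrangian on their new boundary, and one is free to equip them with $\lambda(\Sigma)$ and $\lambda(\Sigma')$ themselves, as the paper does with $(T,t,\lambda(\Sigma)\oplus\lambda(\Sigma'))$ and $(S,s,\lambda(\Sigma')\oplus\lambda(\Sigma))$. Your appeal to Lemma \ref{lagrangianchange} is therefore unnecessary (though harmless).

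The genuine gap is in your reduction to disjoint surfaces. The paper does not prove this step; it cites the proof of Theorem 8.2 in \cite{L1} and \cite{G3}, where one finds the lemma that any two surfaces dual to $\chi$ are joined by a finite chain of dual surfaces with consecutive members disjoint. Your innermost-circle surgery sketch does not deliver this: an innermost compressing disk exists only when some circle of $\Sigma\cap\Sigma'$ is inessential in one of the two surfaces, and nothing prevents \emph{every} intersection circle from being essential in both, in which case no compression is available and your induction does not start. Moreover, the observation that the intersection circles are rationally null-homologous in $M$ (which holds because $\chi\smile\chi$ is torsion) gives no embedded compressing disks and so cannot substitute for them. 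The correct argument in the transverse-essential situation requires a different device (the chain-of-disjoint-dual-surfaces lemma of the cited sources), so this step should be cited or proved by that route rather than by innermost compressions; as written, it is the one point where your proposal would fail.
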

\begin{proof}
We just need prove the case where $\Sigma$ and $\Sigma'$ are disjoint from each other.
See \cite[Proof of Theorem 8.2]{L1}, \cite{G3}.
Since $\Sigma'$ is disjoint from $\Sigma$,
we can choose a copy of $(\Sigma',\lambda(\Sigma'))$ inside $(M_1,r_1,\lambda(\Sigma)\oplus\lambda(\Sigma))$.
We cut along $\Sigma'$ and get two $3$-manifolds $T,S$.
We assign to $T,S$ extended $3$-manifold structures, denoted by $(T,t,\lambda(\Sigma)\oplus\lambda(\Sigma'))$ and $(S,s,\lambda(\Sigma')\oplus\lambda(\Sigma))$,
such that if we glue $R$ to $S$ along $\Sigma'$, we get $(M_1,r_1,\lambda(\Sigma)\oplus\lambda(\Sigma))$ back.
We need to choose appropriate weights $t,s$ for $T,S$.
Using Definition  \ref{extendedgluing}, we see that such $t,s$ exists.
Now we just need prove that if we glue $S$ to $T$ along $\Sigma$,
we obtain $(M_2,r_2,\lambda(\Sigma')\oplus\lambda(\Sigma'))$.
Actually, it is easy to see that after gluing, we have the right base manifold and lagrangian subspace.
What we need to prove is that we get the right weight.
This follows from Lemma \ref{closure}.
\end{proof}

As a consequence of the two lemmas above, we have the following:

\begin{prop}
\label{invariant}
For a tuple $((M,r),\chi)$ and $(M_1,r_1,\lambda(\Sigma_1)\oplus\lambda(\Sigma_1))$ given as in Lemma \ref{surfacechange},
the strong shift equivalent class of the map $Z((M_1,r_1,\lambda(\Sigma_1)\oplus\lambda(\Sigma_1)))$ is independent of the choice of the extended surface $(\Sigma_1,\lambda(\Sigma_1))$.
Thus we may denote this SSE class by $\mathcal{Z}((M,r),\chi)$.
\end{prop}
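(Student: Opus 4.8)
The statement to prove is that the SSE class of the fundamental-domain endomorphism does not depend on the chosen extended dual surface $(\Sigma_1,\lambda(\Sigma_1))$. The plan is to take two arbitrary such choices, $(\Sigma_1,\lambda(\Sigma_1))$ and $(\Sigma_2,\lambda(\Sigma_2))$, together with the fundamental-domain cobordisms $(M_1,r_1,\lambda(\Sigma_1)\oplus\lambda(\Sigma_1))$ and $(M_2,r_2,\lambda(\Sigma_2)\oplus\lambda(\Sigma_2))$ normalized so that both closures equal $(M,r)$, and to show directly that $Z$ of the first is strong shift equivalent to $Z$ of the second. Since $\sim$ is an equivalence relation, this is precisely the asserted independence, and the common class may then be denoted $\mathcal{Z}((M,r),\chi)$. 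The choice being varied has two independent ingredients, the underlying surface and the lagrangian placed on it, and the two lemmas just established are exactly tailored to these two ingredients.

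When $\Sigma_1$ and $\Sigma_2$ are disjoint, I would appeal directly to Lemma \ref{surfacechange}: it allows arbitrary lagrangians on the two surfaces and already incorporates the closure normalization, so it yields $Z((M_1,r_1,\lambda(\Sigma_1)\oplus\lambda(\Sigma_1)))\sim Z((M_2,r_2,\lambda(\Sigma_2)\oplus\lambda(\Sigma_2)))$ at once. The remaining case is when the two surfaces meet; the extreme instance is $\Sigma_1=\Sigma_2$ as surfaces but $\lambda(\Sigma_1)\neq\lambda(\Sigma_2)$, which Lemma \ref{surfacechange} cannot reach because its proof requires disjointness. This is exactly where Lemma \ref{lagrangianchange} earns its keep: on a single fixed surface it gives the SSE of the two endomorphisms obtained from different lagrangians. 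For general intersecting surfaces I would interpolate through a third dual surface $\Sigma_3$, obtained by pushing a dual surface off itself in the covering direction so as to be disjoint from both $\Sigma_1$ and $\Sigma_2$, equip it with any lagrangian $\lambda_3$, and apply Lemma \ref{surfacechange} twice to get $Z_1\sim Z_3$ and $Z_3\sim Z_2$; transitivity of $\sim$ then finishes the argument.

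The step I expect to be the main obstacle is the weight bookkeeping, and this is the place where the two lemmas must be reconciled rather than merely quoted. The fundamental-domain weight $r_i$ is pinned down by the demand that the closure (Definition \ref{selfgluing}) have weight $r$, and the Maslov-index correction entering that closure (Definition \ref{extendedgluing}) depends on the boundary lagrangian; hence a priori a change of lagrangian on a fixed surface forces a change of $r_i$, whereas Lemma \ref{lagrangianchange} is phrased with the cobordism weight held fixed. The real content is therefore to verify that the weight shifts dictated by the closure normalization are precisely the ones that keep the two lemmas applicable: I would track the weight through the cut-and-reglue of Lemma \ref{surfacechange} (using Lemma \ref{closure} to commute the two gluings) and through the cylinder insertions underlying Lemma \ref{lagrangianchange}, checking in each move that every closure remains equal to $(M,r)$ and that no stray power of the anomaly $\kappa$ is introduced that would spoil the strong shift equivalence. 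Once this compatibility of weights is confirmed, the proposition follows immediately by chaining Lemmas \ref{lagrangianchange} and \ref{surfacechange} through the transitivity of $\sim$.
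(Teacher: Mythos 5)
Your overall skeleton---the disjoint case by Lemma \ref{surfacechange}, the same-surface/different-lagrangian case by Lemma \ref{lagrangianchange}, and the general case by interpolation plus transitivity of $\sim$---is exactly the shape of the paper's argument, which states the proposition as an immediate consequence of the two lemmas and delegates the general-position reduction to the proof of Lemma \ref{surfacechange}, citing \cite[Proof of Theorem 8.2]{L1} and \cite{G3}. However, your interpolation step has a genuine gap. A push-off $\Sigma_3$ of $\Sigma_1$ in the covering direction is a parallel copy of $\Sigma_1$: it is disjoint from $\Sigma_1$, but it is ambient isotopic to $\Sigma_1$, so whenever $\Sigma_1$ cannot be isotoped off $\Sigma_2$ (which happens as soon as their intersection curves are essential), $\Sigma_3$ still meets $\Sigma_2$. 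In general there is \emph{no} single dual surface disjoint from both: already for Seifert surfaces, the graph whose vertices are the surfaces and whose edges record disjoint representatives (the Kakimizu complex, in the minimal-genus setting) is connected but can have diameter greater than two, so pairs with no common disjoint neighbor occur. The correct statement, which is what the paper's cited references supply, is that any two surfaces dual to $\chi$ are joined by a finite chain $\Sigma_1=F_0,F_1,\dots,F_k=\Sigma_2$ of dual surfaces with $F_i\cap F_{i+1}=\emptyset$; your transitivity argument then goes through verbatim with $k$ applications of Lemma \ref{surfacechange} instead of two, but you must invoke (or prove) this chain lemma rather than a one-step push-off.

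Your final paragraph on weight bookkeeping flags a real subtlety, but within this paper's setup it is absorbed into the hypotheses of the lemmas rather than needing separate verification in the proposition: the statement of Lemma \ref{surfacechange} allows the two cobordism weights $r_1,r_2$ to differ and only demands that both closures carry weight $r$, so each link of the chain is a direct application once each $F_i$ is given some lagrangian and the weight forced by this normalization. Lemma \ref{lagrangianchange} is what covers the degenerate chain step in which the surface repeats and only the lagrangian changes; no additional tracking of powers of $\kappa$ is required beyond what the two lemmas already assert.
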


Next, we work towards constructing a fundamental domain for  an extended $3$-manifold $((M,r),\chi)$ with an infinite cyclic covering.
Suppose we have a surgery presentation $(D_0,L,s)$ in standard form for $((M,r),\chi)$, here $s=r-\sigma(L)$ \cite[Lemma(2.2)]{GM}.
We do $0$-surgery along $K_0$ and get a link $L$ in $(\sts,s)$.
We cut $\sts$ along the  2-sphere containing $D_0$ in this product structure and obtain a tangle $T$ in $(S^2\times I,s)$ in standard form. Here, we say that a tangle is in standard form if it comes from slicing a surgery presentation diagram  in standard form.
Then we drill out tunnels along arcs which meet the bottom and glue them back to the corresponding place on the top.
We obtain a  cobordism $\hat E$ from $\Sigma_g$ to itself with a link $\hat{L}$  embedded in it as in Figure \ref{fundamentaldomain},
where $\Sigma_g$ is a genus $g$ closed surface. 
See \cite[Figure 3]{G1} for example.
Moreover, we identify $\Sigma_g$ with a standard surface as pictured in Figure \ref{standardsurface}.
We denote   by $\lambda_{\mathcal{A}}$ the lagrangian subspace spanned by the  curves 
labelled by $a_i$  in Figure \ref{standardsurface}.
We assign the lagrangian subspace $\lambda_{\mathcal{A}}$ to each connected component of the boundary of $\hat{E}$.
Moreover, we  assign the weight $s$ to it.
Thus we obtain an extended cobordism $(\hat{E},s,\lambda_\mathcal{A}\oplus\lambda_{\mathcal{A}})$.

\begin{figure}[h]
\includegraphics[width=1.2in,height=1.2in]{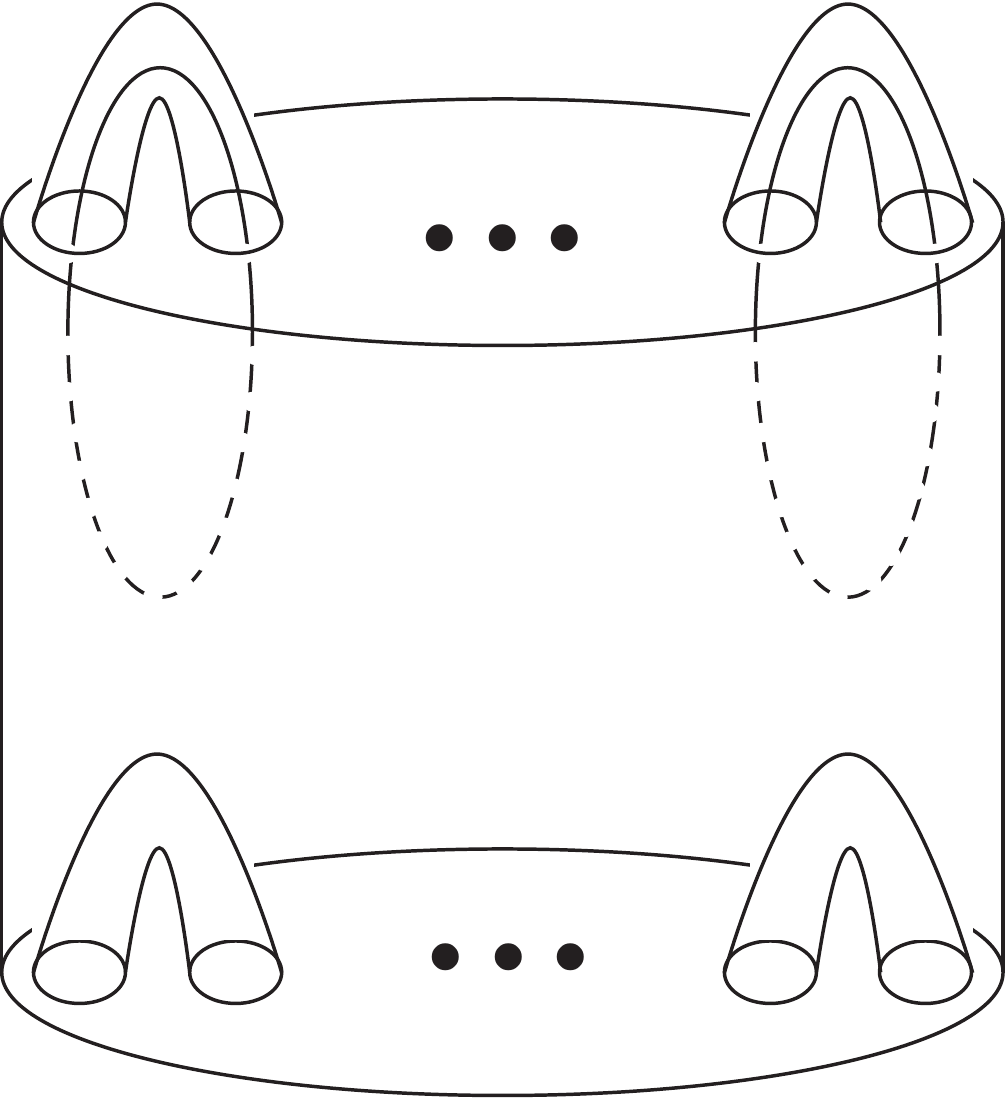}
\caption{The extended cobordism $(\hat{E},s,\lambda_\mathcal{A}\oplus\lambda_{\mathcal{A}})$  containing a framed link $\hat L$. If we do extended surgery along $\hat L$, we get a fundamental domain $E$. 
If, instead,  we color  $\hat L$ by $\omega$, we obtain another cobordism $E'$.}
\label{fundamentaldomain}
\end{figure}

\begin{figure}
\includegraphics[width=1.2in,height=0.6in]{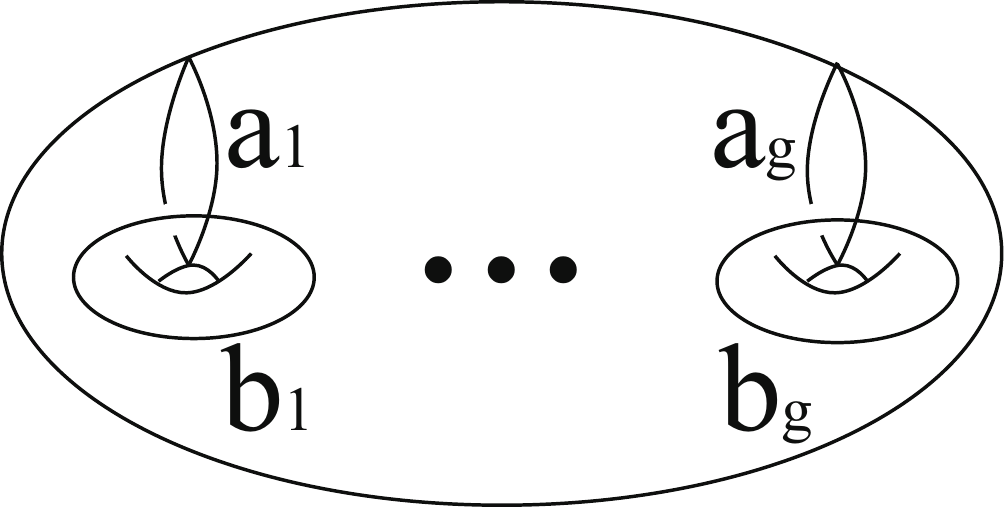}
\caption{A surface in standard position.}
\label{standardsurface}
\end{figure}

\begin{prop}
\label{weight}
The closure of $(\hat{E},s,\lambda_{\mathcal{A}}\oplus\lambda_{\mathcal{A}})$ is $(S^3(U),s,0)$,
where $U$ is a $0$-framed unknot.
\end{prop}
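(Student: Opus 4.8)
The plan is to check separately the three pieces of data of the closed extended manifold: the underlying $3$-manifold, the weight, and the boundary lagrangian.

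For the underlying manifold I would argue that the closure operation of Definition~\ref{selfgluing} simply reverses the construction of $\hat E$. Recall that $\hat E$ was obtained from $(\sts,s)$ by cutting along the $2$-sphere containing $D_0$ to produce the tangle in $S^2\times I$, and then drilling tunnels along the bottom turn-backs and regluing them at the corresponding places on the top. The closure identifies the two boundary copies of $\Sigma_g$ by the orientation-reversing identity; once top and bottom are identified, each reattached tunnel returns to its original feet, so the closure undoes both the drilling/regluing and the original cut along $\stp$ and returns $\sts$. (Equivalently, $\hat E$ is $\sts$ cut open along the genus-$g$ surface obtained by capping $D_0$ and tubing it along the turn-backs of $L$, and the closure reglues along that surface.) Since $\sts$ is the result of $0$-surgery on the unknot, the underlying manifold of the closure is $S^3(U)$.

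For the weight I would apply the gluing formula of Definition~\ref{extendedgluing} to the closure, which glues $(\hat E,s,\lambda_{\mathcal{A}}\oplus\lambda_{\mathcal{A}})$ to the product collar $(\Sigma_g\times[0,1],0,\lambda_{\mathcal{A}}\oplus\lambda_{\mathcal{A}})$ along $\Sigma_g\cup\bar\Sigma_g$. The resulting weight is $s+0-\mu$, where $\mu$ is the Maslov index of the triple of lagrangians on $H_1(\Sigma_g)$ produced by this self-gluing. Because $\lambda_{\mathcal{A}}$ is assigned to both boundary components of $\hat E$ and the collar carries the corresponding product lagrangian, the three lagrangians entering $\mu$ are mutually compatible, so $\mu=0$ and the weight equals $s$. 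As an independent check, performing extended surgery on $\hat L$ inside this closure yields the closed fundamental domain, whose underlying manifold is $M$ and whose weight is $w+\sigma(\hat L)$ where $w$ is the weight of the closure of $\hat E$; since that fundamental-domain closure must be $(M,r)$ and $\sigma(\hat L)=\sigma(L)$, we recover $w=r-\sigma(L)=s$, in agreement. Finally, the closure is a closed manifold, so its boundary is empty and the associated lagrangian is the zero space, which is the meaning of the last entry $0$ in $(S^3(U),s,0)$.

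The step I expect to be the genuine obstacle is the vanishing of the Maslov correction $\mu$ in the weight computation: the identification of the base manifold is essentially formal once one sees that closure undoes cutting, and the lagrangian entry is automatic. If a direct evaluation of $\mu$ for the self-gluing along $\Sigma_g\cup\bar\Sigma_g$ proves awkward, I would instead compute the weight through the $4$-manifold interpretation of weights recorded in the remark following Definition~\ref{selfgluing} (see \cite{Walker,GM}), exhibiting a background $4$-manifold assembled from the collar whose signature is $s$.
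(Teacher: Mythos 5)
Your identification of the base manifold and your reading of the final ``$0$'' as the (zero) lagrangian of the empty boundary both match the paper, which indeed dismisses the manifold identification with ``it is easy to see.'' But the step you yourself flag as the crux --- the vanishing of the Maslov correction --- is where your argument has a genuine gap, and the reason you offer is in fact false. The triple entering $\mu$ is not three copies of compatible product lagrangians: it consists of $\lambda(\Sigma_g\cup\bar\Sigma_g)=\lambda_{\mathcal{A}}\oplus\lambda_{\mathcal{A}}$, the antidiagonal lagrangian $\lambda_{\Sigma_g\times[0,1]}(\Sigma_g\cup\bar\Sigma_g)=\langle (a_i,-a'_i),(b_i,-b'_i)\rangle$ induced by the collar, and $\lambda_{\hat E}(\Sigma_g\cup\bar\Sigma_g)$, which is determined by the topology of $\hat E$ (it is the kernel of the inclusion-induced map on $H_1$ of the boundary), not by the lagrangians you assigned to the boundary. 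The paper computes this kernel explicitly and it is \emph{asymmetric}: $\{(x,y)\mid x\in\langle a_1,\dots,a_g\rangle,\ y\in\langle b'_1,\dots,b'_g\rangle\}$ --- the $a$-curves die on one end of $\hat E$ but the $b$-curves on the other, reflecting that the drilled tunnels are reattached at the opposite end. So the three lagrangians are pairwise distinct, no two coincide, and $\mu=0$ is not formal. The paper's proof then does real work: it computes $\lambda_{\Sigma_g\times[0,1]}\cap\bigl(\lambda+\lambda_{\hat E}\bigr)=\langle (a_i,-a'_i)\mid i=1,\dots,g\rangle$ and verifies that Wall's bilinear form vanishes identically on this subspace, which is what forces $\mu=0$. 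Your sentence ``the three lagrangians entering $\mu$ are mutually compatible'' silently substitutes the assigned boundary lagrangian for $\lambda_{\hat E}$, and with that substitution the argument does not go through.

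Two further cautions. Your proposed fallback via the $4$-manifold interpretation of weights is a legitimate alternative route, but as written it is only a pointer, not an argument; to use it you would still have to exhibit the background $4$-manifold for the self-glued $\hat E$ and compute its signature, which amounts to the same Wall-index bookkeeping. And your ``independent check'' is circular in the logic of the paper: the fact that the closure of the surgered fundamental domain $(E,s,\lambda_{\mathcal{A}}\oplus\lambda_{\mathcal{A}})$ has weight $r$ is the \emph{next} proposition, whose proof invokes the present one (via \cite[Lemma 2.2]{GM} applied to the closure of $\hat E$), so it cannot be used to confirm $w=s$ here.
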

\begin{proof}
It is easy to see that the closure of $\hat{E}$ is $S^3(U)$.
Then we just need to prove that the weight of the closure is $s$.
By the gluing formula and Definition \ref{selfgluing}, 
we have that the weight on $S^3(U)$ is
\begin{equation} 
s+0-\mu(\lambda_{E}(\Sigma_g\cup\bar{\Sigma}_g),\lambda(\Sigma_g\cup\bar{\Sigma}_g),\lambda_{\Sigma_g\times[0,1]}(\Sigma_g\cup\bar{\Sigma}_g)).
\notag
\end{equation}
Now let
\begin{equation}
H_1(\Sigma_g)=<a_1,\cdots,a_g,b_1,\cdots,b_g>,H_1(\bar{\Sigma}_g)=<a'_1,\cdots,a'_g,b'_1,\cdots,b'_g>.
\notag
\end{equation}
Then
\begin{eqnarray}
\lambda_{E}(\Sigma_g\cup\bar{\Sigma}_g)&=&i_{\Sigma_g\cup\bar{\Sigma}_g,E}^{-1}(0)
\notag\\
&=&\{(x,y)\mid x\in<a_1,\cdots,a_g>,y\in<b'_1,\cdots,b'_g>\}.
\notag\\
\lambda_{\Sigma_g\times[0,1]}(\Sigma_g\cup\bar{\Sigma}_g)&=&i_{\Sigma_g\cup\bar{\Sigma}_g,\Sigma_g\times[0,1]}^{-1}(0)
\notag\\
&=&<(a_i,-a'_i),(b_i,-b'_i)\mid i=1,\cdots,g>.
\notag\\
\lambda(\Sigma_g\cup\bar{\Sigma}_g)&=&\lambda_{\mathcal{A}}\oplus\lambda_{\mathcal{A}}
\notag\\
&=&\{(x,y)\mid x\in<a_1,\cdots,a_g>, y\in<a'_1,\cdots,a'_g>\}
\notag
\end{eqnarray}
So
\begin{eqnarray}
&&\lambda(\Sigma_g\cup\bar{\Sigma}_g)+\lambda_{E}(\Sigma_g\cup\bar{\Sigma}_g)
\notag\\
&=&\{(x,y)\mid x\in<a_1,\cdots,a_g>,y\in<a'_1,\cdots,a'_g>+<b'_1,..,b'_g>\}
\notag\\
&=&\{(x,y)\mid x\in<a_1,\cdots,a_g>,y\in H_1(\bar{\Sigma}_g)\}.
\notag
\end{eqnarray}
Therefore,
\begin{eqnarray}
&&\lambda_{\Sigma_g\times[0,1]}(\Sigma_g\cup\bar{\Sigma}_g)\cap[\lambda(\Sigma_g\cup\bar{\Sigma}_g)+\lambda_{E}(\Sigma_g\cup\bar{\Sigma}_g)]
\notag\\
&=&<(a_i,-a'_i)\mid i=1,\cdots,g>.
\notag
\end{eqnarray}
It is easy to see that the bilinear form defined in \cite{Wall} is identically $0$ on $<(a_i,-a'_i)\mid i=1,\cdots,g>$.
So we have 
\begin{equation}
\mu(\lambda_{E}(\Sigma_g\cup\bar{\Sigma}_g),\lambda(\Sigma_g\cup\bar{\Sigma}_g),\lambda_{\Sigma_g\times[0,1]}(\Sigma_g\cup\bar{\Sigma}_g))=0.
\notag
\end{equation}
Then we get the conclusion.
\end{proof}

\begin{prop} 
Let $(E,s,\lambda_{\mathcal{A}}\oplus\lambda_{\mathcal{A}})$  be the result of  extended surgery along the embedded link $\hat{L}$ in 
$(\hat{E},s,\lambda_{\mathcal{A}}\oplus\lambda_{\mathcal{A}})$ constructed as above starting with  a standard surgery presentation diagram  for $((M,r),\chi)$.   $(E,s,\lambda_{\mathcal{A}}\oplus\lambda_{\mathcal{A}})$ is a fundamental domain for $((M,r),\chi)$. \end{prop}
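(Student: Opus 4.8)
The plan is to exhibit a closed surface $\Sigma_g\subset M$ dual to $\chi$ and to check that slicing the extended $3$-manifold $((M,r),\chi)$ along it reproduces exactly $(E,s,\lambda_{\mathcal{A}}\oplus\lambda_{\mathcal{A}})$; by the notion of fundamental domain used in Lemma \ref{surfacechange} and Proposition \ref{invariant}, this is precisely what must be verified. I would organize the argument into a topological identification of the underlying manifold, followed by a verification of the extended (weight and lagrangian) data.

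For the topological part, I would first perform the $0$-surgery along $K_0$, producing $\sts$ with $L$ sitting inside and with $\chi$ dual to the $2$-sphere $S=\stp$ that completes $D_0$. Since each component $L_i$ has linking number $0$ with $K_0$, it meets $S$ algebraically $0$ times; in standard form it meets $S$ in exactly two points. Hence $S$ can be tubed along the components of $L$ to produce the closed surface $\Sigma_g$, which is disjoint from the surgery solid tori and still dual to $\chi$. I would then observe that cutting $M$ along $\Sigma_g$ is exactly the operation used to build $E$: cutting $\sts$ along $S$ gives the standard tangle in $S^2\times I$, drilling the tunnels that carry the turn-backs realizes the tubes of $\Sigma_g$ and creates the genus-$g$ boundary, and performing extended surgery along the remaining strands $\hat{L}$ performs the surgery along $L$. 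Thus the underlying manifold of $E$ is $M$ cut along $\Sigma_g$, and the boundary lagrangian induced on $\Sigma_g\cup\bar{\Sigma}_g$ is $\lambda_{\mathcal{A}}\oplus\lambda_{\mathcal{A}}$.

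For the extended data I would verify that the closure of $(E,s,\lambda_{\mathcal{A}}\oplus\lambda_{\mathcal{A}})$ is $(M,r)$. The surgery along $\hat{L}$ takes place in the interior of $\hat{E}$, away from the gluing region, so it commutes with closing up; Lemma \ref{closure} guarantees that the order of gluing is immaterial. By Proposition \ref{weight} the closure of $(\hat{E},s,\lambda_{\mathcal{A}}\oplus\lambda_{\mathcal{A}})$ is $(S^3(U),s)=(\sts,s)$, and under this closure $\hat{L}$ becomes the original link $L$. Performing extended surgery along $L$ then returns the manifold obtained by surgery along $K_0\cup L$ in $(S^3,s)$, namely $(M,r)$, since $r=s+\sigma(L)$ by the extended surgery conventions recalled in \S\ref{preliminary}. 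Therefore the closure is $(M,r)$ and $E$ is a fundamental domain for $((M,r),\chi)$.

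The hard part will be the bookkeeping of weights and Maslov indices through the interior surgery: I must ensure that commuting the extended surgery past the closure introduces no unaccounted Maslov correction, so that the weight computed in Proposition \ref{weight} and the signature term $\sigma(L)$ assemble to exactly $r$. The matching of the purely topological ``drill the tunnels and surger'' construction with ``cut $M$ along the tubed surface $\Sigma_g$'' also needs care, but it is a direct inspection of the standard form (compare \cite[Figure 3]{G1}).
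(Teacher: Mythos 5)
Your proposal is correct and follows essentially the same route as the paper: the paper's proof is precisely your second paragraph, observing that the closure of $(E,s,\lambda_{\mathcal{A}}\oplus\lambda_{\mathcal{A}})$ is obtained by performing extended surgery on the closure of $(\hat{E},s,\lambda_{\mathcal{A}}\oplus\lambda_{\mathcal{A}})$, which by Proposition \ref{weight} yields a manifold diffeomorphic to $M$ of weight $r$ via \cite[Lemma 2.2]{GM}. The ``hard part'' you flag (commuting the interior surgery past the closure without an unaccounted Maslov correction) is exactly what the paper disposes of by citing the commutative property of gluing in \cite{GM}, and your explicit tubing identification of the cut surface is supporting detail the paper leaves implicit.
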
 

\begin{proof}The closure of $(E,s,\lambda_{\mathcal{A}}\oplus\lambda_{\mathcal{A}})$ can be obtained by performing extended surgery on the closure of $(\hat{E},s,\lambda_{\mathcal{A}}\oplus\lambda_{\mathcal{A}})$.
This uses the commutative property of gluing discussed in \cite{GM}. 
Thus the closure of $E$ is diffeomorphic to $M$, and by \cite[Lemma 2.2]{GM},
we see that the closure of $(E,s,\lambda_{\mathcal{A}}\oplus\lambda_{\mathcal{A}})$ has weight $r$. 
\end{proof}.

\begin{prop} 
Let $(E',s,\lambda_{\mathcal{A}}\oplus\lambda_{\mathcal{A}})$ be the extended cobordism obtained 
by coloring the link $\hat{L}$ in $(\hat{E},s,\lambda_{\mathcal{A}}\oplus\lambda_{\mathcal{A}})$ by $\omega$.
The SSE class $\mathcal{Z}((M,r),\chi)$  is given by
$$Z(E',s,\lambda_{\mathcal{A}}\oplus\lambda_{\mathcal{A}}).$$
\end{prop}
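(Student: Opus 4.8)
The plan is to reduce the statement to the preceding proposition and Proposition~\ref{invariant} by showing that replacing surgery on $\hat L$ by an $\omega$-coloring of $\hat L$ leaves the induced endomorphism unchanged. By the preceding proposition, $(E,s,\lambda_{\mathcal{A}}\oplus\lambda_{\mathcal{A}})$ is a fundamental domain for $((M,r),\chi)$; hence Proposition~\ref{invariant} identifies $\mathcal{Z}((M,r),\chi)$ with the SSE class of the endomorphism $Z(E,s,\lambda_{\mathcal{A}}\oplus\lambda_{\mathcal{A}})$. It therefore suffices to establish the equality of $k_p$-module endomorphisms
\[
Z(E,s,\lambda_{\mathcal{A}}\oplus\lambda_{\mathcal{A}}) = Z(E',s,\lambda_{\mathcal{A}}\oplus\lambda_{\mathcal{A}}),
\]
since two equal endomorphisms certainly lie in the same SSE class.

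To prove this equality I would compute both endomorphisms from a surgery presentation of the cobordism $\hat E$, capping off the two boundary copies of $\Sigma_g$ with handlebodies carrying admissibly colored graphs so that each matrix entry becomes the quantum invariant of a closed extended $3$-manifold. For $Z(E)$ the link $\hat L$ belongs to the surgery data and is therefore colored by $\omega$ in the resulting Kauffman bracket; for $Z(E')$ the link $\hat L$ is colored by $\omega$ by definition. In both computations the element $\omega=\sum_k\eta\Delta_k e_k$ carries its own factor of $\eta$, and the skein appearing inside the bracket is literally identical, consisting of the surgery curves presenting $\hat E$, the capping graphs, and $\hat L(\omega)$. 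Consequently the two closed invariants can differ at most by the normalizing factor $\eta\kappa^{(\cdot)}$ coming from the surgery formula $Z((M,r))=\eta\kappa^{r-\sigma(L)}\langle L(\omega)\rangle$.

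The step I expect to be the main obstacle is verifying that this normalizing factor is the same on both sides, i.e.\ that no anomalous power of $\kappa$ survives. This is precisely where the extended structure is essential. Although treating $\hat L$ as surgery rather than as an $\omega$-colored graph would naively contribute an extra factor $\kappa^{-\sigma(\hat L)}$, the weight of $E$ has been fixed to equal that of $E'$, namely $s$; as in the computation of Proposition~\ref{weight}, the lagrangian induced by the surgered cobordism yields a compensating Maslov-index term when the boundary is closed up, and this term exactly absorbs $\sigma(\hat L)$. Tracking the weights through the surgery formula then shows that each closed invariant equals $\eta\kappa^{s}$ times the common bracket, so the corresponding matrix entries of $Z(E)$ and $Z(E')$ agree. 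As this holds for all cappings, the two endomorphisms are equal, and the proposition follows.
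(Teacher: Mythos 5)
Your reduction is exactly the one the paper intends: the preceding proposition identifies $(E,s,\lambda_{\mathcal{A}}\oplus\lambda_{\mathcal{A}})$ as a fundamental domain, Proposition \ref{invariant} then gives $\mathcal{Z}((M,r),\chi)$ as the SSE class of $Z(E,s,\lambda_{\mathcal{A}}\oplus\lambda_{\mathcal{A}})$, and everything rests on the single equality $Z(E,s,\lambda_{\mathcal{A}}\oplus\lambda_{\mathcal{A}})=Z(E',s,\lambda_{\mathcal{A}}\oplus\lambda_{\mathcal{A}})$. At that point the paper is a one-liner: it invokes the surgery axiom for the extended theory, \cite[Lemma 11.1]{GM}, which says precisely that extended surgery along a framed link and coloring that link by $\omega$ induce the same TQFT morphism; all the $\kappa$-power and Maslov-index bookkeeping is packaged inside that axiom and inside the definition of extended surgery. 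You instead set out to re-derive this fact by capping both boundary copies of $\Sigma_g$ with handlebodies carrying colored graphs and comparing closed invariants.

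That re-derivation can be made to work, but as written it has two gaps. First, to pass from equality of all capped closed invariants to equality of the morphisms you need that the vectors represented by handlebodies with $p$-admissibly colored graphs span $V(\Sigma_g)$ and that the TQFT pairing is nondegenerate over $k_p$; this is true in the BHMV-type theory used here, but you rely on it silently in the sentence ``as this holds for all cappings, the two endomorphisms are equal.'' Second, and more seriously, the step you yourself flag as the main obstacle is exactly the content of the axiom you are trying to bypass, and your proposed resolution does not supply it: Proposition \ref{weight} computes a vanishing Maslov index for the closure of the \emph{unsurgered} cobordism $\hat{E}$ and never involves $\hat{L}$, so it cannot show that a Maslov term ``exactly absorbs $\sigma(\hat{L})$.'' The actual mechanism is different: by the paper's conventions, extended surgery shifts the weight by the signature of the linking matrix (this is why $(M,r)$ is obtained from $(S^3,r-\sigma(L))$, cf. \cite[Lemma 2.2]{GM}), and that shift cancels against the $\kappa^{-\sigma}$ in the normalization $Z((M,r))=\eta\kappa^{r-\sigma(L)}\langle L(\omega)\rangle$; it is this cancellation, codified in the extended surgery theory and the surgery axiom of \cite{GM}, that makes each capped entry on both sides equal $\eta\kappa^{s}$ times the common bracket. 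If you route the weight tracking through the definition of extended surgery rather than through Proposition \ref{weight}, your computation closes; otherwise the efficient course is simply to cite \cite[Lemma 11.1]{GM}, as the paper does.
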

\begin{proof}
The equality $Z(E,s,\lambda_{\mathcal{A}}\oplus\lambda_{\mathcal{A}})=Z(E',s,\lambda_{\mathcal{A}}\oplus\lambda_{\mathcal{A}})$ follows from the surgery axiom  \cite[Lemma 11.1]{GM} for extended surgery.
\end{proof}


\section{The relation between the Turaev-Viro endomorphism and  the tangle endomorphism.}
\label{relation}

In this section, we will prove the following theorem. 

\begin{thm}
\label{TVisT}
If $((M,r),\chi)$ is an extended $3$-manifold with an  infinite cyclic covering having a surgery presentation $(D_0,L,s)$ in standard form,
then  $\mathcal{Z}((M,r),\chi)= Z(D_0,L,s)$.
\end{thm}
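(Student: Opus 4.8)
The plan is to produce both endomorphisms from the \emph{same} standard surgery presentation $(D_0,L,s)$ and to identify their strong shift equivalence classes directly; since we start from a fixed presentation, no invariance statement is needed here (that is the content of Theorem \ref{main}). By the last proposition of \S \ref{TVM}, the class $\mathcal{Z}((M,r),\chi)$ is represented by $Z(E',s,\lambda_{\mathcal{A}}\oplus\lambda_{\mathcal{A}})$, where $E'$ is the tunneled cobordism $\hat{E}$ from $\Sigma_g$ to itself with the embedded curves $\hat{L}$ colored by $\omega$. On the other side, $Z(D_0,L,s)$ is represented by $Z(\mathcal{T}^n_n)$, where $\mathcal{T}^n_n$ is the tangle obtained by cutting $S^2\times S^1$ along $\stp$, with one black dot placed on each component of $L$. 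The two constructions use the same manifold and the same cut; they differ only in that the Turaev-Viro construction tunnels the boundary spheres into genus $g$ surfaces and colors $\hat{L}$ by $\omega$, whereas the tangle construction keeps the punctured spheres and records black dots. I would show that these two recipes give strong shift equivalent endomorphisms.

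First I would record the module comparison. Using the gluing axiom of the TQFT (equivalently, the self-gluing of Definition \ref{selfgluing}), the tunnels that turn each punctured boundary sphere into $\Sigma_g$ with lagrangian $\lambda_{\mathcal{A}}$ produce a natural identification
\begin{equation}
V(\Sigma_g,\lambda_{\mathcal{A}})\;\cong\;\bigoplus_{c} V(S(c)),
\notag
\end{equation}
where the sum runs over the colorings $c$ of the banded points that are \emph{compatible} with the tunnel pairing (the two punctures joined by a tunnel receive equal colors); here the $a_i$ curves of Figure \ref{standardsurface} are exactly the meridians of the tunnels, so $\lambda_{\mathcal{A}}$ is the lagrangian for which these colorings furnish a basis. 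In particular $V(\Sigma_g,\lambda_{\mathcal{A}})$ is a direct summand of $V(S)=\bigoplus_{i_1,\dots,i_n}V(S(i_1,\dots,i_n))$, the complementary summands being the colorings incompatible with the pairing.

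The heart of the argument is the decoration comparison. On a tunnel whose two endpoints carry the common color $k$, a single black dot contributes the scalar $\eta\Delta_k$, and the arc carrying it is decorated by $\Lambda^{(1)}_k=\eta\Delta_k f_k$; closing $f_k$ up through the tunnel turns it into $e_k$. Summing over the color $k$ of that component therefore yields $\sum_k\eta\Delta_k e_k=\omega=\omega^{(1)}$, which is precisely the color carried by the corresponding component of $\hat{L}$ in $E'$. Thus the ``one black dot plus sum over legal colorings'' prescription of Definitions \ref{componenttanglemap} and \ref{tanglemap} reproduces, term by term, the $\omega$-coloring used to build $E'$; this is also the reason exactly one black dot is placed on each component. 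Combining this with the module identification and the functoriality of $Z$ (Proposition \ref{functorproposition}, together with the gluing axiom), I would conclude that under the isomorphism above the operator $Z(E',s,\lambda_{\mathcal{A}}\oplus\lambda_{\mathcal{A}})$ agrees with the restriction of $Z(\mathcal{T}^n_n)$ to the compatible summand $V(\Sigma_g,\lambda_{\mathcal{A}})$.

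Finally I would remove the extra summands by a strong shift equivalence. A source coloring incompatible with the bottom pairing admits no legal completion, so the whole summand $V(S(i_1,\dots,i_n))$ lies in the kernel of $Z(\mathcal{T}^n_n)$; by Proposition \ref{ker}, passing to $V(\Sigma_g,\lambda_{\mathcal{A}})$ is then a strong shift equivalence. Hence $Z(\mathcal{T}^n_n)\sim Z(E',s,\lambda_{\mathcal{A}}\oplus\lambda_{\mathcal{A}})$, which gives $Z(D_0,L,s)=\mathcal{Z}((M,r),\chi)$. The main obstacle I anticipate is the bookkeeping in the module identification and the operator comparison: one must check that the normalization constants introduced by the gluing axiom (the factors of $\eta$, $\Delta_k$, and $\Theta(a,b,c)$ attached to the cut circles and to the trivalent vertices of the standard spine) match the factors supplied by the black dots and by the definition of $\omega$, so that the two operators agree exactly rather than merely up to an overall scalar. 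The computational identity $\sum_k\eta\Delta_k f_k\leftrightarrow\omega$ is clean; it is the surrounding normalization that will require care.
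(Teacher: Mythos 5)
Your proposal is correct and follows essentially the same route as the paper: its proof likewise amounts to identifying the pairing-compatible summand of $V(S)$ with $V(\Sigma_g,\lambda_{\mathcal{A}})$, matching the black-dot decorations $\Lambda^{(1)}_k=\eta\Delta_k f_k$ summed over legal colorings with the $\omega$-coloring of $\hat{L}$, and discarding the incompatible summands (the zero block) via Proposition \ref{ker}. The only difference is presentational: where you invoke the gluing axiom and defer the normalization bookkeeping, the paper carries out that comparison concretely, computing both matrices entry-by-entry in the BHMV bases (using fusion and Lemma 6 of \cite{L2}) and finding the common factor $\kappa^{s}\eta^{n}\Delta_{j_1}\cdots\Delta_{j_n}$ times identical bracket quotients---precisely the check you flagged as the remaining point requiring care.
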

\begin{proof}
For simplicity, we indicate the proof in case that $((M,r),\chi)$ does not have a colored trivalent graph. The argument may easily be adapted to the more general case.

We obtain a tangle  $\hat{L}_n$ from the surgery presentation $(D_0,L,s)$, 
and we place black dots on  segments in the top part. 
We will directly compute two matrices for these two endomorphisms with respect to some bases.

{\bf Step 1: Compute the entry for the Turaev-Viro endomorphism.}
We will use the basis in \cite{BHMV} for $V(\Sigma_g)$,
where $\Sigma_g$ is genus $g$ surface.
Specifically we choose our spine to be a lollipop graph, as in \cite{GM2}.
We show one example of elements as in Figure \ref{basis1}.
\begin{figure}[h]
\includegraphics[width=1.4in,height=1.2in]{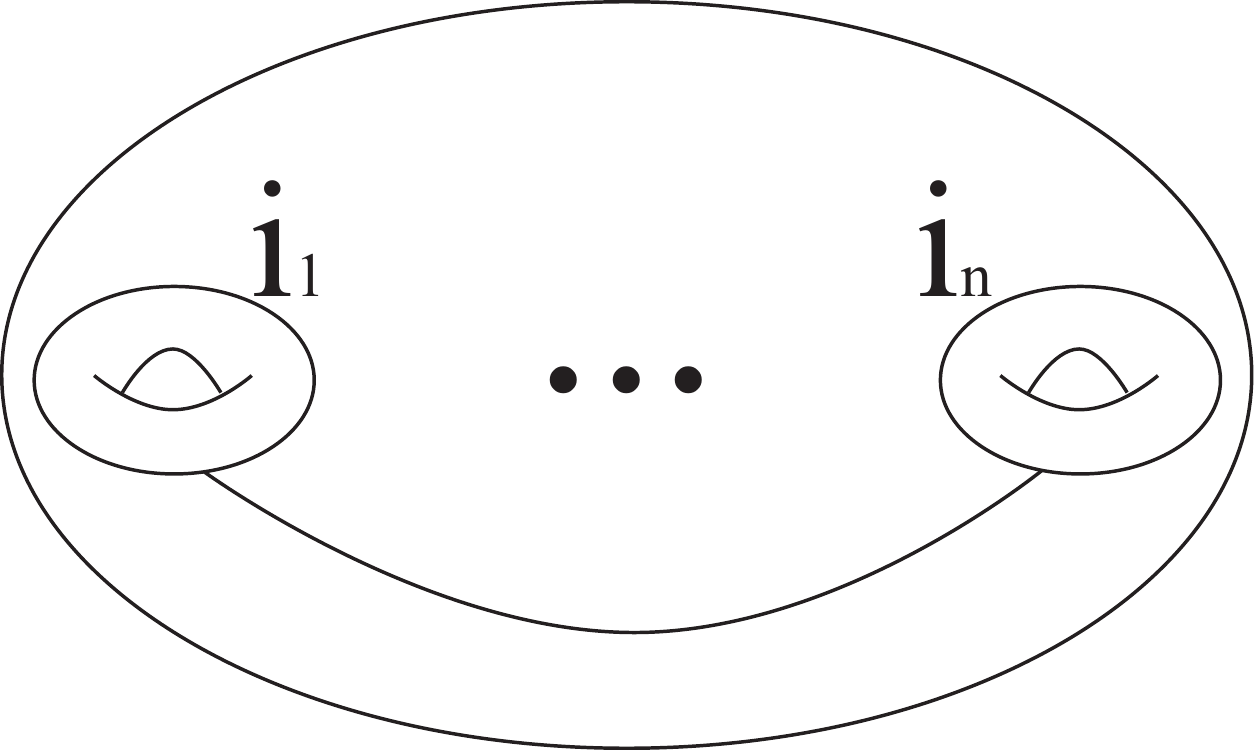}
\caption{An example of elements in the basis  for $V(\Sigma_g)$ constructed in \cite{BHMV}.}
\label{basis1}
\end{figure}

Using the method employed in \cite[\S 8]{G2}, we can compute the entries of the matrix,
with respect to this basis by computing the quantum invariants of   colored links in a connected sum of $S^1 \times S^2$'s . We have
\begin{eqnarray}
&&(i_1,\cdots,i_g)\text{-}(j_1,\cdots,j_g) \text{\ entry of\ }Z((E',s,\lambda_{\mathcal{A}}\oplus\lambda_{\mathcal{A}})\notag\\
&=&\frac{\eta\kappa^{s-\sigma(L')}<\text{the first diagram in Figure \ref{tventry}}>}{\eta\kappa^{-\sigma(L'')}<\text{the first diagram in Figure \ref{JS}}>}.\notag
\end{eqnarray}
where $L'$ as in Figure \ref{tventry} and $L''$ as in Figure \ref{JS}.
\begin{figure}[h]
\includegraphics[width=1in,height=1.2in]{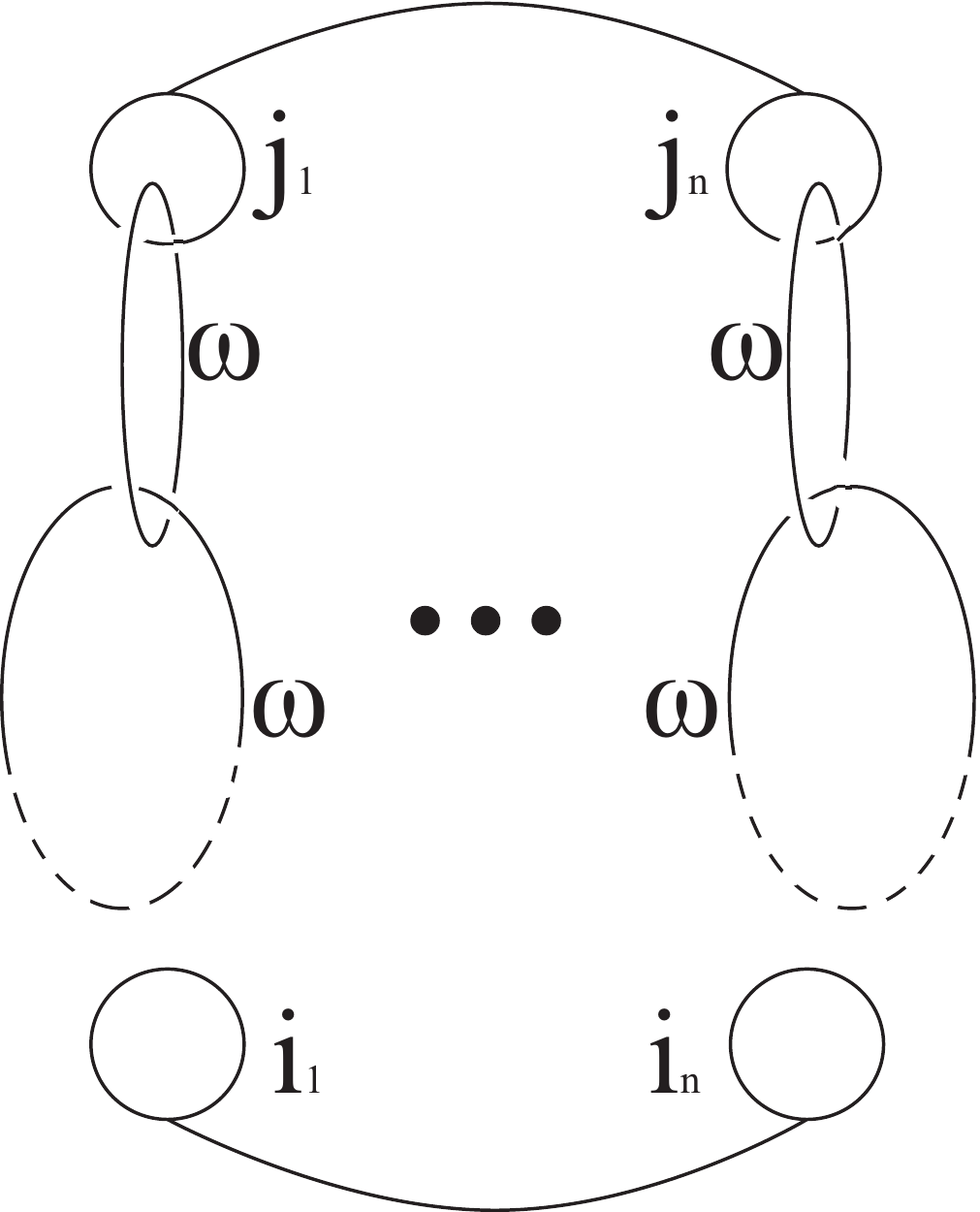}\hskip 0.5in
\includegraphics[width=1in,height=1.2in]{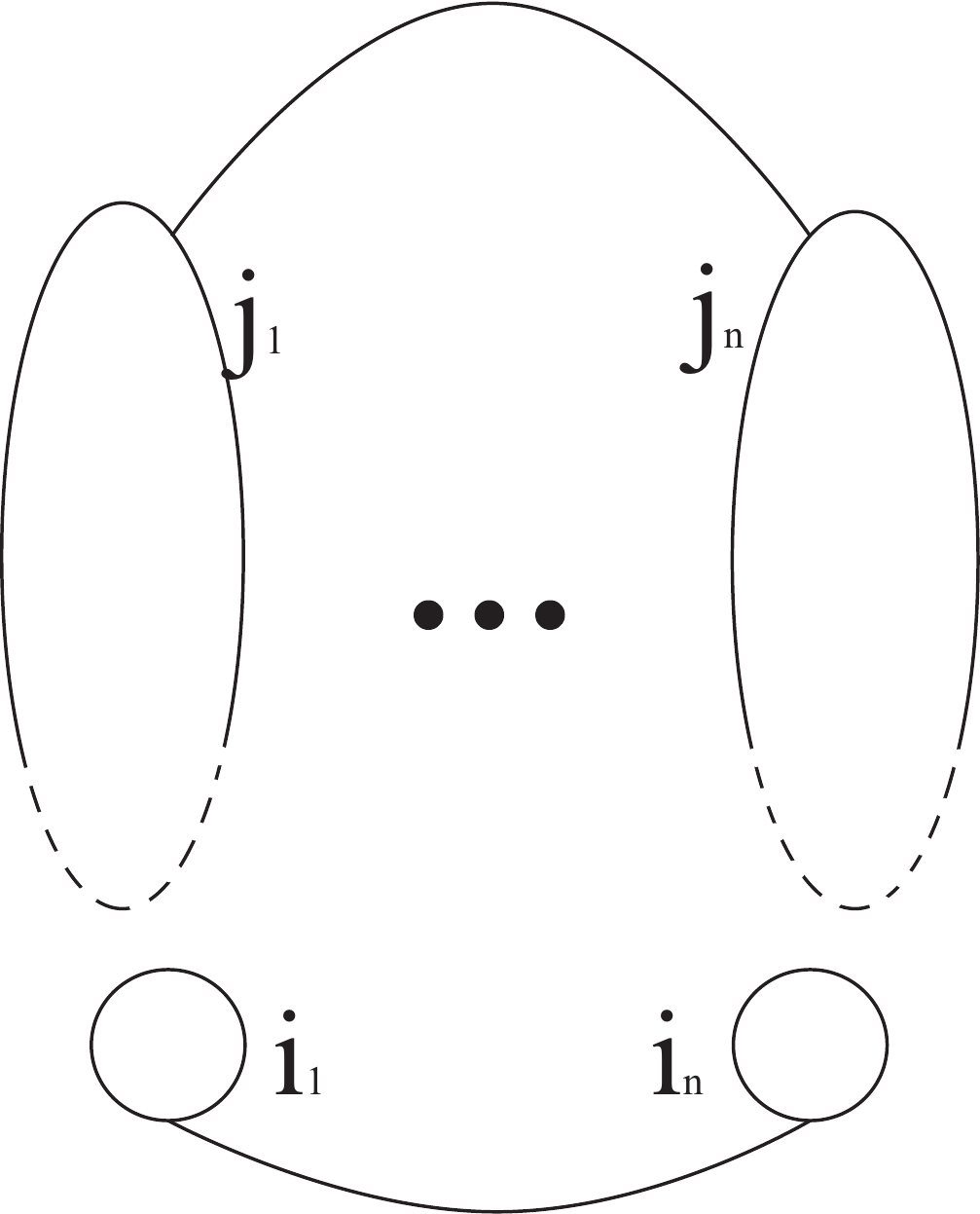}
\caption{$L'$ is consisted of components colored with $\omega$.}
\label{tventry}
\end{figure}
\begin{figure}[h]
\includegraphics[width=1in,height=0.8in]{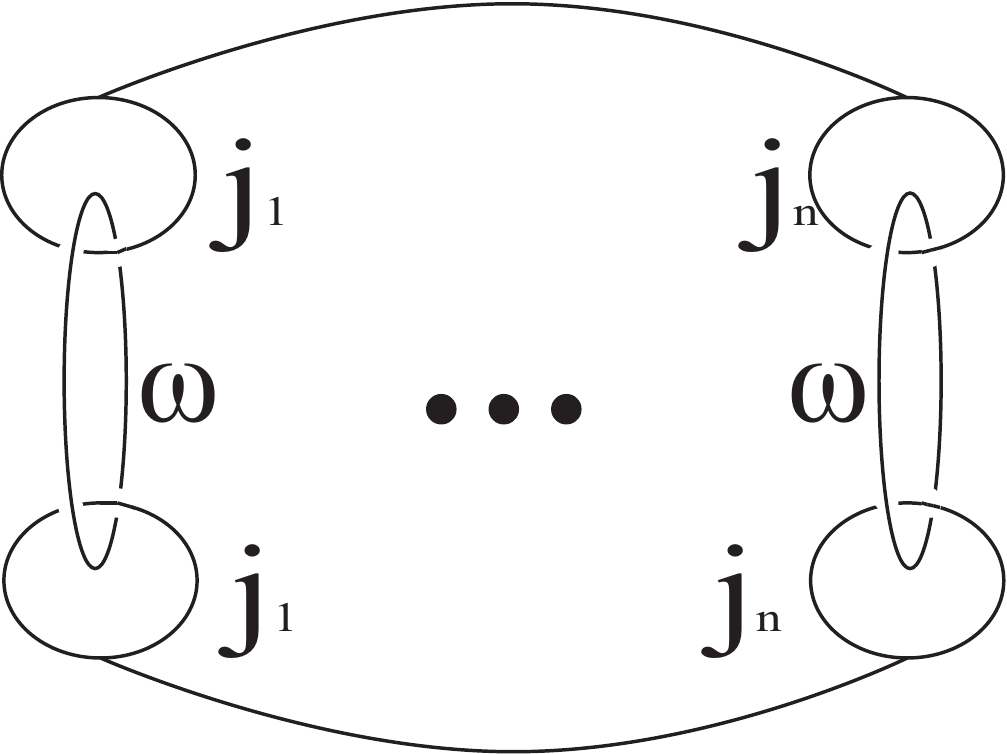}\hskip 0.5in
\includegraphics[width=1in,height=0.8in]{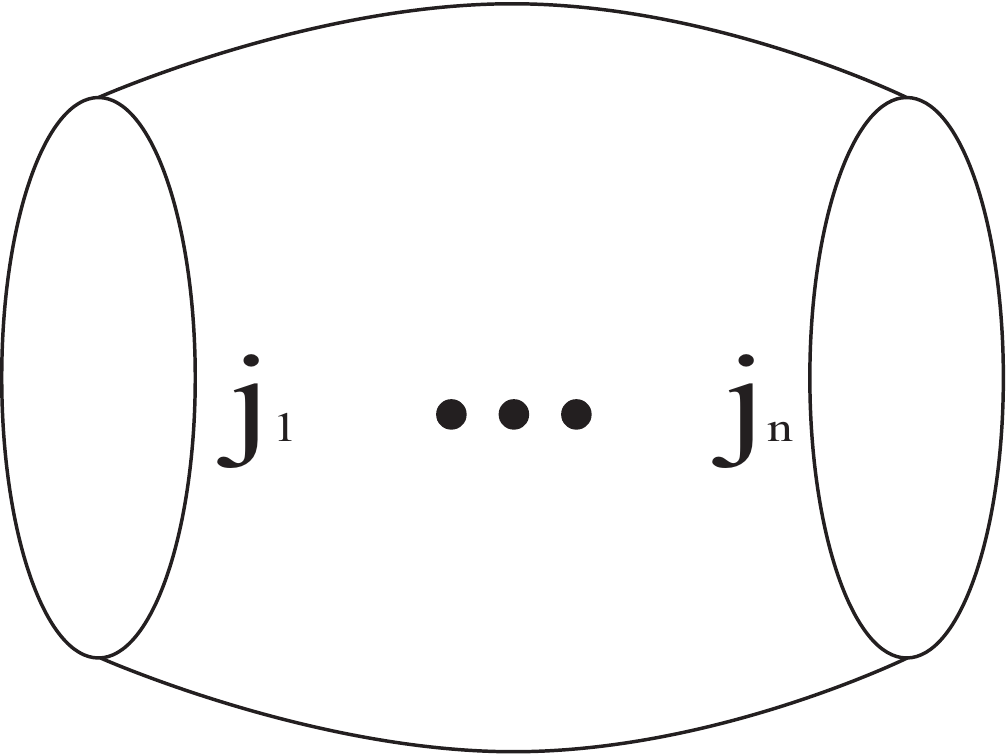}
\caption{$L''$ is consisted of components colored with $\omega$.}
\label{JS}
\end{figure}

By using fusion and Lemma 6 in \cite{L2} and the fact that
\begin{equation}
\sigma(L')=\sigma(L'')=0,\notag
\end{equation}
we have
\begin{eqnarray}
&&(i_1,\cdots,i_g)\text{-}(j_1,\cdots,j_g)\text{\ entry of\ }Z((E',-\sigma(L),\lambda_{\mathcal{A}}\oplus\lambda_{\mathcal{A}})\notag\\
&=&\frac{\eta\kappa^{s}\eta^n\Delta_{j_1}\cdots\Delta_{j_n}<U(\omega)>^n<\text{the second diagram in Figure \ref{tventry}}>}{\eta<U(\omega)>^n<\text{the second diagram in Figure \ref{JS}}>}\notag\\
&=&\frac{\kappa^{s}\eta^n\Delta_{j_1}\cdots\Delta_{j_n}<\text{the second diagram in Figure \ref{tventry}}>}{<\text{the second diagram in Figure \ref{JS}}>}\notag
\notag
\end{eqnarray}
where $U(\omega)$ is the $0$-framing unknot colored with $\omega$.

{\bf Step 2: Compute the entry for tangle endomorphism.}
\begin{figure}[h]
\includegraphics[width=1.4in,height=1.4in]{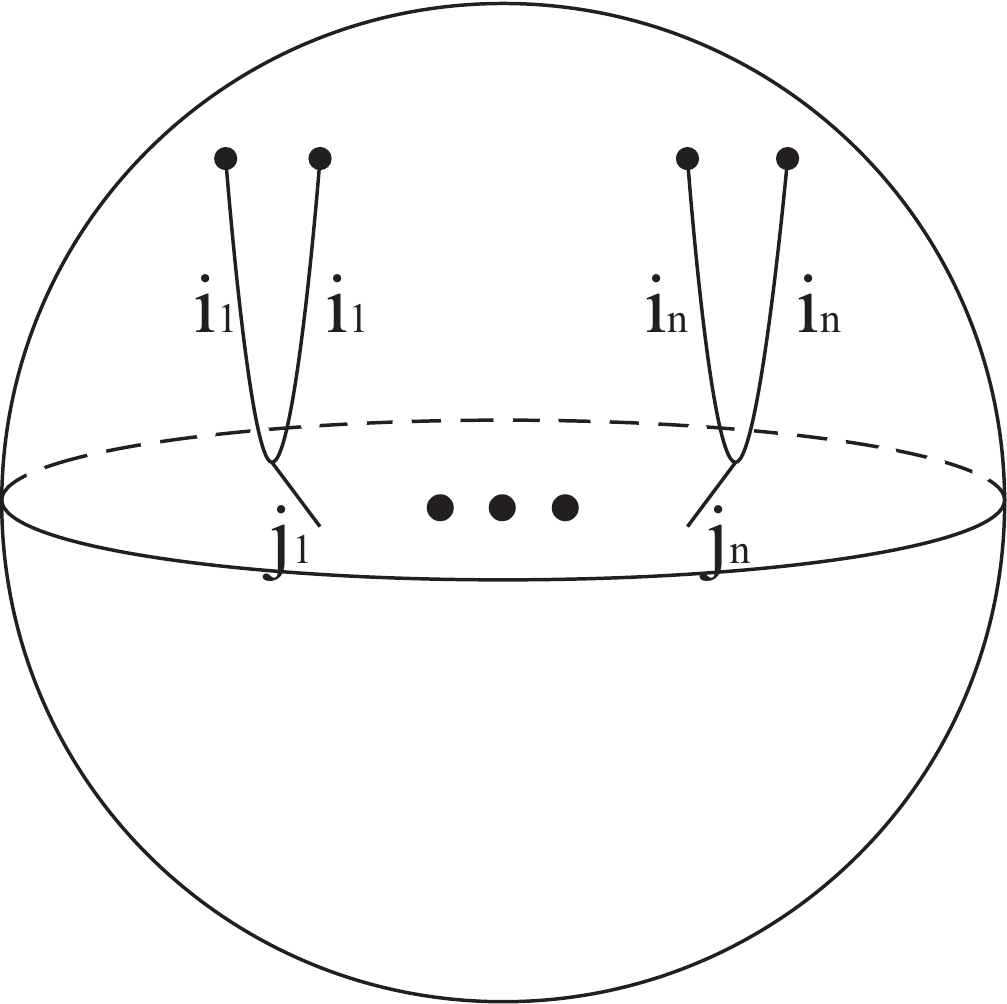}
\caption{Elements in a basis for ${V(S^2;2 n)}$ which do not automatically vanish under $Z(\hat{L}_n)$ have this form.}
\label{basis4}
\end{figure}

By gluing the tangle in $(S^2\times I,s)$ to the basis element in Figure \ref{basis4}, we can see that
\begin{eqnarray}
&&(i_1,\cdots,i_g)\text{-}(j_1,\cdots,j_g)\text{\ entry of\ }Z(\hat{L}_n)\notag\\
&=&\frac{\kappa^{s}\eta^n\Delta_{j_1}\cdots\Delta_{j_n}<\text{the second diagram in Figure \ref{tventry}}>}{<\text{the second diagram in Figure \ref{JS}}>}\notag
\notag
\end{eqnarray}
for $(i_1,\cdots,i_n,j_1,\cdots,j_n)$ a legal coloring, and is zero  otherwise.

{\bf Step 3: The two matrices are strong shift equivalent.}
By above discussion,
it is easy to see that if the matrix for Turaev-Viro endomorphism is $X$,
then a matrix for tangle endomorphism is the block matrix
$
\begin{bmatrix}
X & 0\\
0 & 0
\end{bmatrix}.
$
 We see that this  block matrix is  strong shift equivalent  to $X$ by Proposition \ref{ker}.
\end{proof}


\section{Proof of  Theorem \ref{main}}
\label{PMT}

\begin{lem}
\label{transforminvariant}
The transformation process in Proposition \ref{standardization} does not change the strong shift equivalent class of the tangle endomorphism.
\end{lem}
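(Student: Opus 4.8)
The plan is to verify that each of the four elementary moves permitted in Proposition~\ref{standardization}---an isotopy of $L\cup G$ relative to $D_0$, a small Kirby-$1$ move, a small Kirby-$2$ move, and a $D_0$-move---preserves the strong shift equivalence class of the tangle endomorphism; since the transformation process is a finite composite of such moves, the conclusion then follows. For the first three kinds of move I expect to establish the stronger statement that the homomorphism $Z(\mathcal{T}^n_n)$ itself is unchanged. An isotopy relative to $D_0$ produces, after we cut $\sts$ along $\stp$, the very same tangle in $(S^2\times I,s)$ up to an isotopy fixing the two boundary spheres $\sto$ and $\stl$, so $Z(\mathcal{T}^n_n)$ is literally unaffected. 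A small Kirby move is, by definition, an extended Kirby move whose supporting disk or band lies in the complement of $D_0$; hence it may be realized as a modification of $\mathcal{T}^n_n$ supported in a ball disjoint from both boundary spheres, replacing the $\omega$-colored interior link by one obtained by a blow-up or blow-down, or by a handle slide over an $\omega$-colored component. Because $Z$ is defined on extended $3$-manifolds and is therefore invariant under extended Kirby moves, and because the surgery axiom \cite[Lemma 11.1]{GM} identifies $\omega$-coloring with extended surgery \cite{BHMV,GM}, such a localized modification leaves the underlying extended cobordism-with-tangle unchanged and hence does not alter $Z(\mathcal{T}^n_n)$.

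The $D_0$-moves are the only place where genuine strong shift equivalence (rather than equality) enters, and here I would argue exactly as in the proof of Lemma~\ref{blackdots}. After the $0$-surgery along $K_0$, both the old spanning disk $D_0$ and the new disk $D'$ cap off to fiber spheres of $\sts$; since $D_0\cap D'=K_0$, these two fiber spheres may be taken disjoint, and they cut $\sts$ into two copies of $S^2\times I$, say $P$ and $Q$, carrying tangles $T_P\colon \stp\to S^2\times\{p'\}$ and $T_Q\colon S^2\times\{p'\}\to\stp$. Cutting along the sphere through $D_0$ yields the composite tangle $T_Q\circ T_P$, while performing the $D_0$-move and then cutting (equivalently, cutting along the sphere through $D'$) yields $T_P\circ T_Q$. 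Writing $R=Z(T_P)$ and $S=Z(T_Q)$ and using the functoriality of Proposition~\ref{functorproposition}, the two tangle endomorphisms are $SR$ and $RS$; these are elementarily strong shift equivalent, so the $D_0$-move preserves the SSE class.

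I expect the main obstacle to be the bookkeeping that makes the small Kirby move step honest. One must check that the $\pm1$-framed unknot created by a small Kirby-$1$ move, or the $\omega$-colored push-off introduced by a small Kirby-$2$ slide, carries exactly the Kirby color $\omega$, so that blow-up and handle-slide invariance of the $\omega$-colored bracket apply; this is where the convention of placing a single black dot on each component (giving the color $\omega^{(1)}=\omega$) is used, and where one must confirm that the weight adjustment $s\mapsto s\mp\epsilon$ prescribed by the extended Kirby-$1$ move is precisely the signature correction that keeps $Z$ invariant. The black dots are otherwise harmless: their positions do not matter by Lemma~\ref{blackdots}, so in the $D_0$-move argument I am free to slide them off the turn-backs before re-cutting, and the disjointness of the two fiber spheres can be arranged by a preliminary isotopy, itself one of the permitted moves. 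With these points settled, the weights on $P$ and $Q$ add correctly under Proposition~\ref{functorproposition}, the Maslov corrections vanishing because all the gluing surfaces are spheres.
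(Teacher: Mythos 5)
Your proposal is correct and follows essentially the same route as the paper's proof: small Kirby-$1$ moves are handled by the cancellation between the factor $\kappa^{\pm 1}$ coming from the $\pm 1$-framed $\omega$-colored unknot and the change $s\mapsto s\mp 1$ of the weight (since $s=r-\sigma(L)$), small Kirby-$2$ moves by the handle-slide property of $\omega$ applied summand by summand, and the $D_0$-move by cutting $\sts$ along the two fiber spheres to write the two tangle endomorphisms as $Z(T_2)Z(T_1)$ and $Z(T_1)Z(T_2)$, which are elementarily strong shift equivalent. The only cosmetic difference is that you flag the $\kappa$-versus-weight cancellation as a point still to be confirmed, whereas the paper carries out exactly this one-line verification.
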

\begin{proof}
A small 
 extended
 Kirby-$1$ move adds a $\pm 1$ framed $\omega$ to all the different decorations of $\mathcal{T}^n_n$
which go into the definition of  $Z(\mathcal{T}^n_n)$. This would seem to multiply $Z(\mathcal{T}^n_n)$  by $\kappa^{\pm1}$. But a small extended Kirby-$1$ move also changes $\sigma(L)$ by $\pm 1,$ and thus changes the weight $s$ of $S^2 \times I \supset \mathcal{T}^n_n$ by $\mp 1$.  These two effects of the move cancel out and $Z(\mathcal{T}^n_n)$ is unchanged. 
The small Kirby-$2$ moves preserves all the summands of $Z(\mathcal{T}^n_n)$, by a well known handle slide property of $\omega.$ 
See \cite[Lemma 21]{KL} for instance.
Two tangles related by a  $D_0$ move are obtained  by cutting $\sts$ along two different $S^2$'s.
Suppose if we cut $\sts$ along $S_0=\stpo$, we obtain a tangle $\hat{L}_n$.
If we cut along $S_1=\stpl$, we obtain a tangle $\hat{L}'_m$.
By those two cutting, we obtain two homomorphisms
\begin{equation}
Z(\hat{L}_n):\vsn\rightarrow\vsn
\notag
\end{equation}
and
\begin{equation}
Z(\hat{L}'_m):\vsm\rightarrow\vsm.
\notag
\end{equation}
Now suppose we cut $\sts$ along $\stpo$ and $\stpl$,
we get a $(n,m)$-tangle in $\sti$, denoted by $T_1$, 
and a $(m,n)$-tangle, denoted by $T_2$.
$T_1$ defines a  homomorphism
\begin{equation}
Z(T_1):\vsn\rightarrow\vsm,
\notag
\end{equation}
and $T_2$ defines a homomorphism
\begin{equation}
Z(T_2):\vsm\rightarrow\vsn.
\notag
\end{equation}
It is easy to see that
\begin{equation}
Z(\hat{L}_n)=Z(T_2)Z(T_1),
\notag
\end{equation}
and
\begin{equation}
Z(\hat{L}'_m)=Z(T_1)Z(T_2).
\notag
\end{equation}
Therefore, $Z(\hat{L}_n)$ is strong shift equivalent to $Z(\hat{L}'_m).$
\end{proof}

\begin{lem}
\label{TV}
Suppose we have two surgery presentations $(D_0,L_1,s_1,G_1)$ and $(D_0,L_2,s_2,G_2)$ for $(M,r,\chi,G')$ in standard form,
then 
\begin{equation}
Z(D_0,L_1,s_1,G_1)=  Z(D_0,L_2,s_2,G_2).
\notag
\end{equation}
\end{lem}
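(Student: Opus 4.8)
The plan is to reduce Lemma \ref{TV} to results already established in the excerpt. The key observation is that Lemma \ref{transforminvariant} tells us the SSE class $Z(D_0,L,s,G)$ is unchanged under the full list of permitted moves (isotopies relative to $D_0$, small Kirby-$1$ and Kirby-$2$ moves, and $D_0$-moves), while Theorem \ref{TVisT} identifies $Z(D_0,L,s,G)$ with the Turaev-Viro SSE class $\mathcal{Z}((M,r),\chi)$ whenever the presentation is in standard form. Since both $(D_0,L_1,s_1,G_1)$ and $(D_0,L_2,s_2,G_2)$ are already assumed to be in standard form, I would simply invoke Theorem \ref{TVisT} twice to obtain
\begin{equation}
Z(D_0,L_1,s_1,G_1)=\mathcal{Z}((M,r),\chi)=Z(D_0,L_2,s_2,G_2),
\notag
\end{equation}
using that $\mathcal{Z}((M,r),\chi)$ is a well-defined invariant of the pair by Proposition \ref{invariant} (independent of the choice of spanning surface), hence the same for both presentations.

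The one point that needs care is whether Theorem \ref{TVisT}, as stated for $((M,r),\chi)$ without a graph, really applies verbatim in the presence of the colored trivalent graph $G'$. The proof of Theorem \ref{TVisT} was written out only in the graph-free case, with the remark that it adapts to the general case; so in this lemma I would explicitly note that the matrix computation in Steps 1--3 of that theorem goes through unchanged with the extra colored points $x_1,\dots,x_u$ and $y_1,\dots,y_t$ of $G'$ carried along as fixed coloring data throughout, since they only add spectator strands to every diagram in Figures \ref{tventry} and \ref{JS} and never interact with the legality condition on the tangle strands. This keeps the block-matrix structure $\begin{bmatrix} X & 0 \\ 0 & 0\end{bmatrix}$ and the appeal to Proposition \ref{ker} intact.

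The main obstacle I anticipate is not in this lemma itself, which is essentially a corollary of Theorem \ref{TVisT}, but in making sure the invariance of $\mathcal{Z}((M,r),\chi)$ is being applied legitimately: Theorem \ref{TVisT} produces $\mathcal{Z}((M,r),\chi)$ from a standard surgery presentation via a specific choice of fundamental domain, and I must confirm that both standard presentations feed into the same invariant. This is guaranteed because Proposition \ref{invariant} shows $\mathcal{Z}((M,r),\chi)$ depends only on $((M,r),\chi)$ and not on the surface used to cut out the fundamental domain, and both surgery presentations describe the same triple $((M,r),\chi,G')$ by hypothesis. Thus the two identifications land on the same SSE class, completing the argument.

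\begin{proof}
Both surgery presentations $(D_0,L_1,s_1,G_1)$ and $(D_0,L_2,s_2,G_2)$ are in standard form and present the same $(M,r,\chi,G')$. By Theorem \ref{TVisT}, applied to each (with the colored points of $G'$ carried along as fixed coloring data throughout the computation of Steps 1--3, which only adds spectator strands to the diagrams and leaves the block structure and the appeal to Proposition \ref{ker} unaffected), we have
\begin{equation}
Z(D_0,L_1,s_1,G_1)=\mathcal{Z}((M,r),\chi,G')=Z(D_0,L_2,s_2,G_2),
\notag
\end{equation}
where the common middle term is well defined, independent of the choices involved, by Proposition \ref{invariant}. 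This gives the desired equality.
\end{proof}
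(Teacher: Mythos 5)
Your proposal is correct and is essentially the paper's own proof, which simply invokes Theorem \ref{TVisT} for each standard presentation and uses the well-definedness of $\mathcal{Z}((M,r),\chi)$ from Proposition \ref{invariant}. Your extra care about carrying the colored graph $G'$ through the computation is a reasonable elaboration of the paper's standing remark (in Section \ref{TVM}) that the discussion extends to manifolds with embedded colored trivalent graphs, but it does not change the argument.
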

\begin{proof}
This easily follows from Propositions \ref{TVisT}.
\end{proof}

\begin{proof}[Proof of Theorem \ref{main}]
By Proposition \ref{standardization}, and Lemma \ref{transforminvariant}, we can transform $(D_0,L_1,s_1,G_1)$ and $(D_0,L_2,s_2,G_2)$ so that they are standard without changing the SSE class of their induced tangle endomorphism.
Then the result follows from Lemma \ref{TV}.
\end{proof}


\section{Colored Jones polynomials and Turaev-Viro endomorphisms}
\label{CJ}

In this section, we assume, for simplicity, that $p$ is odd. Similar formulas could be given for $p$ even, by the same methods.
We let $J(K,i)$ denote the bracket evaluation of a knot diagram of $K$ with zero writhe colored $i$ at a primitive $2p$th root of unity $A$. Letting  $U$ denote the unknot, we have that $J(U,i) = \Delta_i$. In particular,  $J(U,1) = -A^2-A^{-2}$.  This is one normalization of the colored Jones polynomial  at a root of unity.

\begin{rem} \label{sym} 
 Using \cite[Lemma 6.3]{BHMV1}, we have that: $$ J(K, i+p)=  -J(K, i)  \text{,  and }
J(K, i+(p-1)/2)= J(K, -i+(p-3)/2).$$
Without losing information,  we can restrict our attention to $J(K,2i)$ for $0 \le i \le (p-3)/2.$  For other $c$,  $J(K,c) = \pm J(K,2i)$ for some $0 \le i \le (p-3)/2,$  
using the above equations.\end{rem}

Let $(S^3(K),i,j,0))$ denote 0-framed surgery along an oriented knot $K$ in $S^3$ decorated with a meridian to $K$ colored $i$ and a longitude  little further away from $K$
 colored $j$ and equipped with the weight zero. Let $\chi$ be the homomorphism from $H_1(M)$  to $\bz$ which sends a meridian to one. 
Let $\TV(K,i,j)$ denote  
the SSE class of 
 the Turaev-Viro endomorphism $\mathcal{Z}(S^3(K),i,j,0)),\chi)$.  The vector space associated to a 2-sphere with just one colored point which is  colored by an odd number is zero. Using this fact, and a surgery presentation, one sees that 
$$\TV(K,i,j)=0 \text{ if $i$ is odd.}$$
The second author  studied $\TV(K,i,0)$ \cite{G1,G2}. The idea of adding the longitude with varying colors is due to Viro \cite{Viro',Viro}.   The least interesting case, of this next theorem, when $j=0$ 
 already appeared in \cite[Corollary 8.3]{G1}.   

\begin{thm}[Viro]
\label{ColoredJonesequalTraceSum} 
For $0 \le j \le p-2$, 
\begin{equation*}\label{J}
J(K, j)= \sum_{i=0}^{(p-3)/2} \Delta_{2i} \Trace(\TV(K,2i,j)).
\end{equation*}
\end{thm}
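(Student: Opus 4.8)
The plan is to reduce the right-hand side to a single Kauffman bracket in $S^3$, by first converting each trace into a closed quantum invariant and then absorbing the summed meridian colors into the Kirby color $\omega$.

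First I would invoke the trace property of the TQFT. Since $\Trace$ is invariant under SSE, the quantity $\Trace(\TV(K,2i,j))$ is well defined and may be computed on any representative, in particular on the endomorphism $Z(E)$ of a fundamental-domain cobordism $E$ for $(S^3(K),\chi)$ decorated by the colored meridian and longitude. For a cobordism from a surface to itself, the trace of the induced endomorphism equals the invariant of its closure; and the closure re-glues $M$ cut along $\Sigma$, hence is $M$ itself with its prescribed weight. Thus, with $M=S^3(K)$ carrying a meridian $m_{2i}$ colored $2i$, a longitude $\ell_j$ colored $j$, and weight $r=0$,
$$\Trace(\TV(K,2i,j)) = Z\big(S^3(K),0,m_{2i},\ell_j\big).$$
The point requiring care here is that the extended structure behaves correctly under closure, i.e. that the Maslov/weight corrections of Definition \ref{selfgluing} yield weight $0$; this is exactly the content of the weight computation already carried out for the closure of $\hat E$ (Proposition \ref{weight}), together with Lemma \ref{closure}.

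Next I would evaluate this closed invariant from the skein formula. A $0$-framed surgery presentation of $S^3(K)$ is $K$ with framing $0$, whose linking matrix is $[0]$, so $\sigma=0$; hence
$$Z\big(S^3(K),0,m_{2i},\ell_j\big) = \eta\,\kappa^{0}\,\big\langle K(\omega)\cup m_{2i}\cup \ell_j\big\rangle = \eta\,\big\langle K(\omega)\cup m_{2i}\cup \ell_j\big\rangle.$$
Multiplying by $\Delta_{2i}$, summing over $0\le i\le (p-3)/2$, and using linearity of the bracket in the meridian color together with $\omega=\sum_i \eta\Delta_{2i}e_{2i}$ (recall $p$ is odd, so the admissible colors are the even ones), one gets $\sum_i \Delta_{2i}m_{2i}=\eta^{-1}m_\omega$, where $m_\omega$ is the meridian colored $\omega$. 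Therefore the whole right-hand side of the theorem equals
$$\eta\cdot\eta^{-1}\big\langle K(\omega)\cup m_\omega\cup \ell_j\big\rangle = \big\langle K(\omega)\cup m_\omega\cup \ell_j\big\rangle.$$

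Finally I would apply the killing property of $\omega$: an $\omega$-colored circle encircling a single strand colored $k$ equals $\delta_{k,0}\,\eta^{-1}$ times that strand (the scalar is pinned down by closing the strand to an unknot, where $k=0$ gives $\langle U(\omega)\rangle=\eta\sum_l\Delta_l^2=\eta^{-1}$ and $k\neq 0$ gives $0$ by $S$-matrix orthogonality). The one geometric point to fix is that the disk bounded by $m_\omega$ meets $K$ once but is disjoint from $\ell_j$, since the longitude is pushed a little further away than the extent of the meridian. Writing $K(\omega)=\sum_k \eta\Delta_k K_k$, only the $k=0$ term survives and $K$ is erased:
$$\big\langle K(\omega)\cup m_\omega\cup \ell_j\big\rangle = \sum_k \eta\Delta_k\,\delta_{k,0}\,\eta^{-1}\,\langle \ell_j\rangle = \langle \ell_j\rangle.$$
Since $\ell_j$ is a $0$-framed copy of $K$ colored $j$, its bracket with zero writhe is precisely $J(K,j)$, which completes the identity. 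The principal obstacles are the two points flagged above: verifying the trace-equals-closure identification with the correct extended weight, and arranging the relative positions of $m_\omega$ and $\ell_j$ so that the meridian encircles $K$ alone.
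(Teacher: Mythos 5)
Your proof is correct and follows essentially the same route as the paper: both arguments rest on the trace property of the TQFT to identify $\Trace(\TV(K,2i,j))$ with the closed invariant of $(S^3(K),2i,j,0)$, compute it from the surgery formula with the zero-framed copy of $K$ (so $\sigma=0$ and the $\kappa$-factor disappears), and then reassemble the weighted sum $\eta\sum_i \Delta_{2i}\, e_{2i}$ into $\omega$ on the meridian. The only divergence is in the justification of one step, and it is cosmetic: the paper removes the $\omega$-colored meridian by the Kirby-calculus observation that a zero-framed meridian undoes the surgery on $K$ (running the identity from $S^3$ with $K$ colored $j$ forward), whereas you evaluate it directly at the skein level via the killing property of $\omega$, with the scalar $\eta^{-1}$ pinned down exactly as needed --- these are equivalent mechanisms, and your handling of the two delicate points (the extended weight under closure, and the meridian disk meeting $K$ but not $\ell_j$) matches the paper's conventions.
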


\begin{proof} One has that $0$-framed surgery along $K$ with the weight zero is the result of extended surgery of $S^3$ with weight zero along a zero-framed copy of $K$.  
If we add then a zero-framed meridian of $K$ to this framed link description,  
we undo the surgery along $K$ and we get back an extended surgery description of $S^3$, also with weight zero.
A longitude to $K$ colored $j$ and  placed a little outside the meridian will go to a longitude of $K$ colored $j$ in $S^3$, which is of course isotopic to $K$. 
But adding a zero-framed meridian to the framed link changes $< \ >_p$ in the same way as cabling by $\omega = \eta\sum_{i=0}^{(p-3)/2} \Delta_{2i} e_{2i}$. 
If we cable the meridian of $K$  by $e_{2i}$ instead of by $\omega$, 
and calculate 
$< \ >_p$, 
we get 
$$<(S^3(K),2i,j,0)>_p=\Trace (\TV(K,2i,j)),$$
by the trace property of TQFT \cite[1.2]{BHMV}.  
Thus 
$$<S^3 \text{ with $K$ colored $j$} >= \eta \sum_{i=0}^{(p-3)/2} \Delta_{2i} \Trace (\TV(K,2i,j)).$$ 
Dividing by $\eta$ yields the result. 
\end{proof}

Thus the colored Jones is determined by the traces of the  $\TV(K,2i,j).$ The next theorem shows that the $J(K,j)$ determine the traces of the $\TV(K,2i,j)$.

\begin{thm}
\label{TraceequalColoredJonesSum}
For $0 \le i, j \le (p-3)/2$, 
\begin{equation*}\label{Trace}
\Trace(\TV(K,2i,2j))=  \eta^2  \sum_{k=0}^{(p-3)/2}   \sum_{l= |k-j|}^{k+j} \Delta_{(2k+1)(2i+1)-1} J(K,2l).
\end{equation*}
More generally :
$$\Trace(\TV(K,2i,j))=  \eta^2  \sum_{k=0}^{(p-3)/2}   \sum_{
\begin{matrix} 
l= |2k-j| \\
l \equiv j \mod 2
\end{matrix}
}
^{2k+j} \Delta_{(2k+1)(2i +1)-1} J(K,l).$$.
\end{thm}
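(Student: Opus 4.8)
The plan is to compute $\Trace(\TV(K,2i,j))$ directly as a Kauffman bracket in $S^3$, in the spirit of the proof of Theorem \ref{ColoredJonesequalTraceSum}, and then to absorb the coloured meridian into the surgery curve. First I would record the starting identity. By the trace property of the TQFT and the surgery formula $Z((M,r),G)=\eta\kappa^{r-\sigma(L)}\langle L(\omega)\cup G\rangle$, applied to the presentation of $S^3(K)$ by zero-framed surgery on $K$ (so that $r=0$, $\sigma=0$), one gets
$$\Trace(\TV(K,2i,j)) = \eta\,\langle K(\omega)\cup\mu(2i)\cup\ell(j)\rangle,$$
where $K(\omega)$ is a zero-framed $K$ cabled by $\omega=\eta\sum_k\Delta_{2k}e_{2k}$, $\mu$ is a meridian of $K$ coloured $2i$, and $\ell$ is a zero-framed longitude coloured $j$ sitting just outside $\mu$.

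Next I would expand $\omega$ and absorb $\mu$. Because $\mu$ encircles $K$ exactly once and is linked with no other component (the longitude $\ell$ being pushed outside it), the standard encircling identity — a $c$-coloured meridian around a strand coloured $d$ multiplies that strand by $\langle\mathrm{Hopf}(c,d)\rangle/\Delta_d$ — turns the $e_{2k}$-summand of $K(\omega)$ into $S_{2i,2k}$ times a plain $e_{2k}$-cabling, where $S_{2i,2k}=\langle\mathrm{Hopf}(2i,2k)\rangle$. Thus $\mu$ is absorbed, $K$ is recoloured by $\eta\sum_k S_{2i,2k}\,e_{2k}$, and
$$\Trace(\TV(K,2i,j)) = \eta^2\sum_{k=0}^{(p-3)/2} S_{2i,2k}\,\langle K(e_{2k})\cup\ell(j)\rangle.$$
Then I would fuse: since $\ell$ is a parallel pushoff of $K$, the two parallel strands multiply in the skein algebra of the annular neighbourhood of $K$ by the Clebsch--Gordan rule $e_{2k}e_j=\sum_b e_b$, the sum over admissible colours $b$ with $|2k-j|\le b\le 2k+j$ and $b\equiv j\bmod 2$. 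Evaluating $\langle\cdot\rangle$ termwise and writing $J(K,b)=\langle K(e_b)\rangle$ gives $\langle K(e_{2k})\cup\ell(j)\rangle=\sum_b J(K,b)$.

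Finally I would insert the explicit Hopf-link value $S_{2i,2k}=\Delta_{(2k+1)(2i+1)-1}$, which holds because $\langle\mathrm{Hopf}(a,b)\rangle=(-1)^{a+b}[(a+1)(b+1)]$ and $\Delta_n=(-1)^n[n+1]$ carry the same sign when $a,b$ are even. Specializing $j=2j$ makes $b=2l$ with $|k-j|\le l\le k+j$, which reproduces the first displayed formula; keeping $j$ arbitrary gives the general statement. As a consistency check, summing against $\Delta_{2i}$ and using $\sum_i\Delta_{2i}S_{2i,2k}=\eta^{-2}\delta_{k,0}$ collapses the right-hand side to $J(K,j)$, recovering Theorem \ref{ColoredJonesequalTraceSum}.

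The genuinely delicate point is bookkeeping rather than conceptual difficulty. I must ensure that $\mu$ links only $K$ and not $\ell$, so the encircling identity applies before the fusion; that the two factors of $\eta$ — one from the surgery normalization, one from $\omega$ — both survive while the $\kappa$-powers and signature corrections vanish because the weight and all framings are zero; and that the sign in $S_{2i,2k}=\Delta_{(2k+1)(2i+1)-1}$ comes out $+1$ throughout the even-colour ($SO(3)$, $p$ odd) range. Each is routine, so the real content is simply performing the encircling and fusion steps in the correct order.
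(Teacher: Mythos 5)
Your proposal is correct and follows essentially the same route as the paper's proof: the trace property plus the surgery formula, absorption of the colored meridian via the encircling eigenvalue (the paper cites \cite[Lemma 14.2]{L1}, which gives exactly your $S_{2i,2k}=\Delta_{(2k+1)(2i+1)-1}$), and the fusion rule $e_{2k}\cdot e_j=\sum_b e_b$ in the skein algebra of the annular neighborhood using the Chebyshev-extended $e_k$, followed by termwise evaluation $J(K,b)=\langle K(e_b)\rangle$. Your closing consistency check recovering Theorem \ref{ColoredJonesequalTraceSum} is a sound extra verification, not present in (but compatible with) the paper's argument.
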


\begin{proof} 
By the trace property of TQFT, 
$$\Trace(\TV(K,2i,2j))= <(S^3(K),2i,2j)>_p.$$
Direct calculation of $<(S^3(K),2i,2j)>_p$ from the definition yields $\eta$ times the bracket evaluation of $K$ cabled by $\omega$
together with the meridian  colored $2i$ and the longitude further out colored $2j$.  These skeins all lie in a regular neighborhood of $K$ with framing zero. These skeins can then be expanded as a linear combination of  the core of this solid torus with different colors.

The operation of encircling an arc colored $2k$ with loop  colored $2j$ in the skein module of a local disk has the same effect as multiplying the arc by  $\Delta_{(2k+1)(2j+1)-1}/\Delta_{2k}$ by
\cite[Lemma 14.2]{L1}. Note the idempotents $f_{k}$ are only defined for $0 \le k \le (p-2).$ 
It is well known that the $e_k$ satisfy  a recursive formula  which can be used to extend the definition of $e_k$ for all $k \ge 0$. This is given \cite{BHMV1} as follows:
$e_0=1$, $e_1$ is the zero framed core of a solid standard solid torus, and $e_k= z e_{k-1}- e_{k-2}$.
  In the skein module of a  solid torus,  we  have  $e_{2k}.e_{2j} = \sum_{l= |k-j|}^{k+j} e_{2l}$. 
Using these rules,  the expansion can be worked out to be  $$\eta  \sum_{k=0}^{(p-3)/2}   \sum_{l= |k-j|}^{k+j} \Delta_{(2k+1)(2i+1)-1} e_{2l}.$$

The second equation is worked out in a similar way.
\end{proof}

Notice that,  in the summation on the right of the first equation in Theorem \ref{Trace}, $J(K,2l)$ for $l > {(p-3)/2}$ sometimes appears. 
This can be rewritten using Remark \ref{sym} as $J(K,2j)$ for $j \le {(p-3)/2}$.

We remark that using \cite[Corollary 2.8]{G4},  one can see that the Turaev-Viro polynomials 
 of $\TV(K,i,j)$ will have coefficients in a cyclotomic ring of integers, if $p$ is an odd prime or twice an odd prime.


\section{Examples}
\label{example}

In this section, we wish to illustrate with some concrete examples how to calculate the $\TV(K,i,j)$ using tangle morphisms in the case $p=5$ (which is the first interesting case). 
For both examples, we check our computation against an identity from the previous section.

The first example  is the $k$-twist knot with meridian colored $0$ or $2$ and longitude colored $2$.  We then  verify
directly the equation in Theorem \ref{ColoredJonesequalTraceSum} for the case $p=5$, $j=2$, and $K$ is  the $k$-twist knot.

The second example we study is the knot $6_2$ with the meridian and longitude uncolored.
We  work out, using tangle morphisms, the traces of the Turaev-Viro  endomorphism.  We then  verify the equation in Theorem \ref{TraceequalColoredJonesSum} when $p=5$, $i=j=0$, and $K= 6_2$.

We pick an orthogonal basis for the module associated with a 2-sphere with some points, and use this basis to work out the entries on the matrix for the tangle endomorphism coming from a surgery presentation. The bases are represented by colored trees in the 3-ball which meet the boundary in the colored points as in Figure \ref{basis4}. Here we will refer to these colored trees as basis-trees. Each entry is obtained as a certain quotient. The numerator is the evaluation as a colored fat graph in $S^3$  obtained from the tangle  closed off with the source basis-tree  at the bottom and the target basis-tree at the top. The denominator is the quotient as the  evaluation of the double of  the target basis element.
In both examples, we use a surgery presentation, with one surgery curve with framing $+1$.  Thus the initial weight of $S^3$, denoted $s$ above,  should be $-1$, so the weight of $S^3$ after the surgery is zero. This puts a factor of $\kappa^{-1}$ in front of the tangle endomorphism. There is also a uniform factor of $\eta$ coming from the single black dot on a strand with two endpoints.  We put this total factor of $\kappa^{-1} \eta$ in front.  We also have $\Delta_i$ prefactors where  $i$ is the color of  the strand  with the black dot, and these factor vary from entry to entry.

To simplify our formulas, when $p=5$, we use $\Tet$ to abbreviate $\Tet(2,2,2,2,2,2)$, $\Delta$ to abbreviate  $\Delta_1=\Delta_2$  and $\Theta$ to denote $\Theta(2,2,2)$.

\subsection{The Turaev-Viro endomorphism and the colored Jones polynomial of the $k$-twist knot.}

A tangle $T$ for the $k$-twist knot with meridian and longitude is given in Figure \ref{double}.

\begin{figure}[h]
\includegraphics[width=1.2in,height=1.2in]{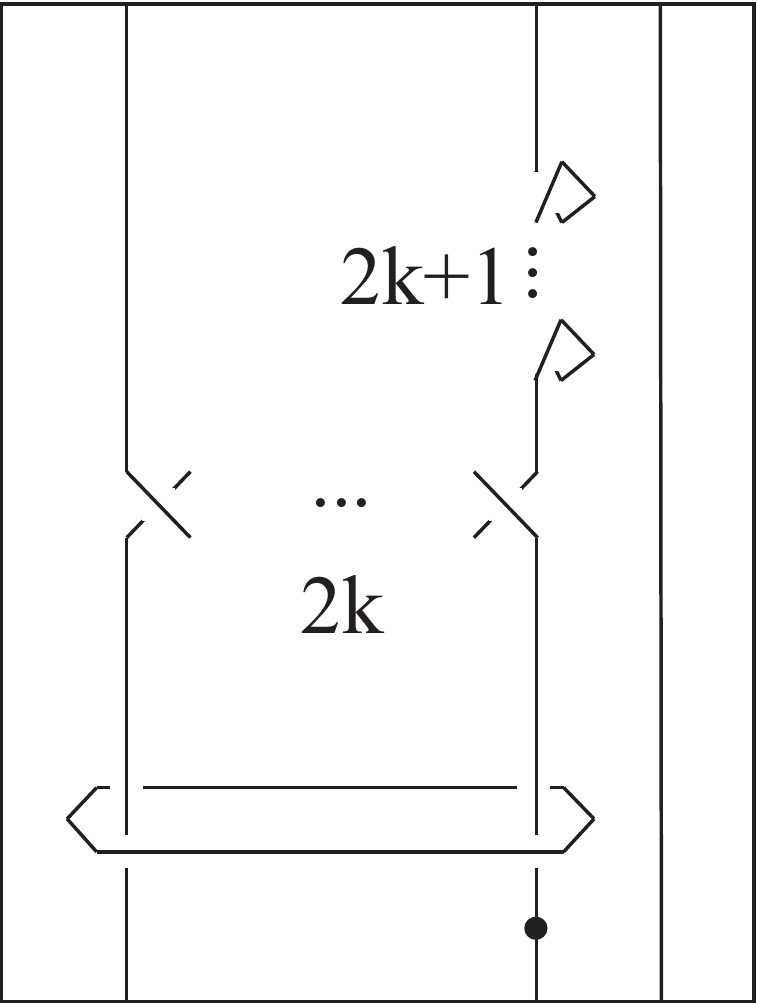}
\caption{Surgery presentation of $k$-twist knot with meridian and longitude.
The straight line is from the meridian and the circle is from the longitude. We have also chosen a position for the black dot.}
\label{double}
\end{figure}

If we denote by $T_0$ the tangle $T$ with meridian colored by $0$ and longitude colored by $2$, and let $S$ denote a 2-sphere with two uncolored points, 
then we obtain a map
\begin{equation}
\TV(K,0,2)=Z(T_0):\vs\rightarrow\vs.
\notag
\end{equation}
By using the trivalent graph basis in \cite{BHMV},
\begin{equation}
\vs=\text{Span}<a_1,a_2>,
\notag
\end{equation}
where $a_1,a_2$ are as in Figure \ref{basis0}.
\begin{figure}[h]
\includegraphics[width=0.8in,height=0.5in]{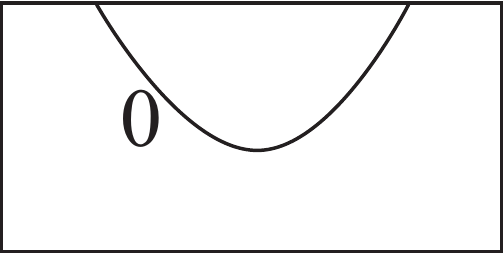}\hskip 0.5in
\includegraphics[width=0.8in,height=0.5in]{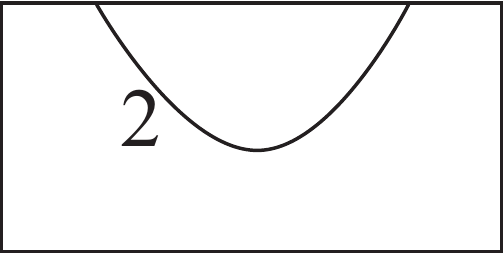}
\caption{A basis for $\vs$ where $S$ is a 2-sphere with two uncolored points}
\label{basis0}
\end{figure}
With respect to this basis,
we have
\begin{equation}
\TV(K,0,2)=\kappa^{-1}\eta
\begin{bmatrix}
   \Delta &  \Delta^3  \\
 \Delta A^{16k+8}  & \Delta (A^8+\Delta A^{8k+8}).
 \end{bmatrix}
\notag
\end{equation}

We follow the convention that the columns of the matrix for a linear transformation with respect to a basis are given  the images of that basis written in terms of that basis. The characteristic polynomial of this matrix  (i.e. the Turaev-Viro polynomial) has coefficients in $\bz[A].$

If we denote by $T_2$ the tangle $T$ with meridian colored by $2$ and longitude colored by $2$, and let $S$ denote a 2-sphere with two uncolored points and one point colored $2$,
then we obtain a map
\begin{equation}
\TV(K,2,2)=Z(T_2):\vs \rightarrow\vs.
\notag
\end{equation}
By using the trivalent graph basis in \cite{BHMV},
\begin{equation}
\vs=\text{Span}<b_1,b_2,b_3>,
\notag
\end{equation}
where $b_1,b_2,b_3$ are as in Figure \ref{basis3}.
\begin{figure}[h]
\includegraphics[width=0.8in,height=0.5in]{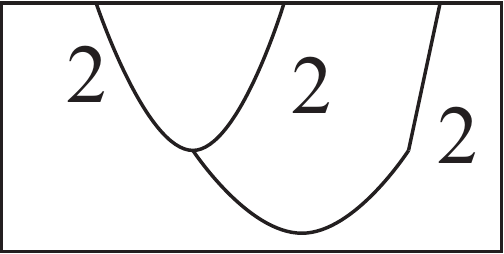}\hskip 0.2in
\includegraphics[width=0.8in,height=0.5in]{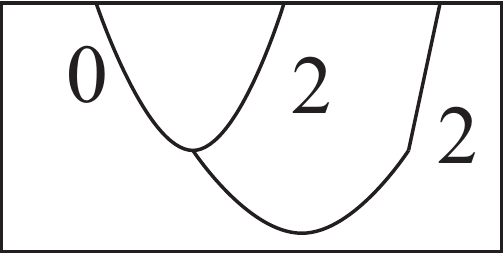}\hskip 0.2in
\includegraphics[width=0.8in,height=0.5in]{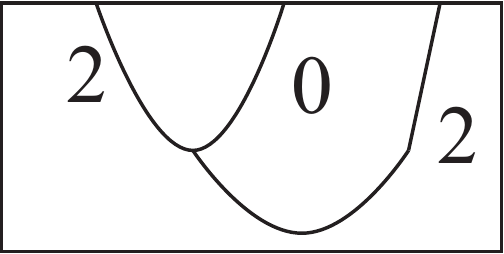}
\caption{A basis for $\vs$ where $S$ is a 2-sphere with two uncolored points and one point colored 2}
\label{basis3}
\end{figure}

With respect to this basis,
we have
\begin{equation}
TV(K,2,2)=\kappa^{-1}\eta
\begin{bmatrix}
  -(\frac{ A^8}{\Delta}+\frac{ A^{8k+8}\Delta \Tet}{\Theta^2}) & 0 & 0 \\
  0 & 0 & 0 \\
  0 & 0 & 0   
 \end{bmatrix} \ .
\notag
\end{equation}

By Proposition \ref{ker}, we also have
$TV(K,2,2)=
\begin{bmatrix}
  - \kappa^{-1}\eta(\frac{ A^8}{\Delta}+\frac{ A^{8k+8}\Delta \Tet}{\Theta^2}) 
 \end{bmatrix} \ .
$
This  last expression  lies in $\bz[A]$ for all $k$.
One has that:
\begin{eqnarray}
&&\Trace (TV(K,0,2))+\Delta\Trace (TV(K,2,2))\notag\\
&=&\kappa^{-1}\eta\Delta(1+A^8+\Delta A^{8k+8}) -\kappa^{-1}\eta\Delta(\frac{A^8}{\Delta}+\frac{A^{8k+8}\Delta \Tet}{\Theta^2}). \notag
\end{eqnarray}
Moreover, we used recoupling theory as in \cite{MV,KL, L1} to calculate the $2$-colored Jones polynomial of $k$-twist knot directly to obtain:
\begin{equation}
J(K,2)=-\frac{A^4}{\Delta}+ (1+\frac{\Delta^2\Tet}{\Theta^2 A^8})A^{8k}\notag
\end{equation}
We used Mathematica to verify that the two calculations agree for all $k$.

\subsection{The Turaev-Viro endomorphism of $6_2$ and quantum invariant of $S^3(6_2)$.}
In this section,
we will compute the Turaev-Viro endomorphism and quantum invariant of $S^3(6_2)$ when $A$ is a primitive $10$th root of unity
and verify that the trace of the Turaev-Viro endomorphism equals to quantum invariant.
By $6_2$, we mean the knot as pictured in \cite{CL}, which is the mirror image of the knot as pictured in \cite{L1,R2}.
A tangle $T$ for $S^3(6_2)$ is as in Figure \ref{sixtwo}.
\begin{figure}[h]
\includegraphics[width=1in,height=1.2in]{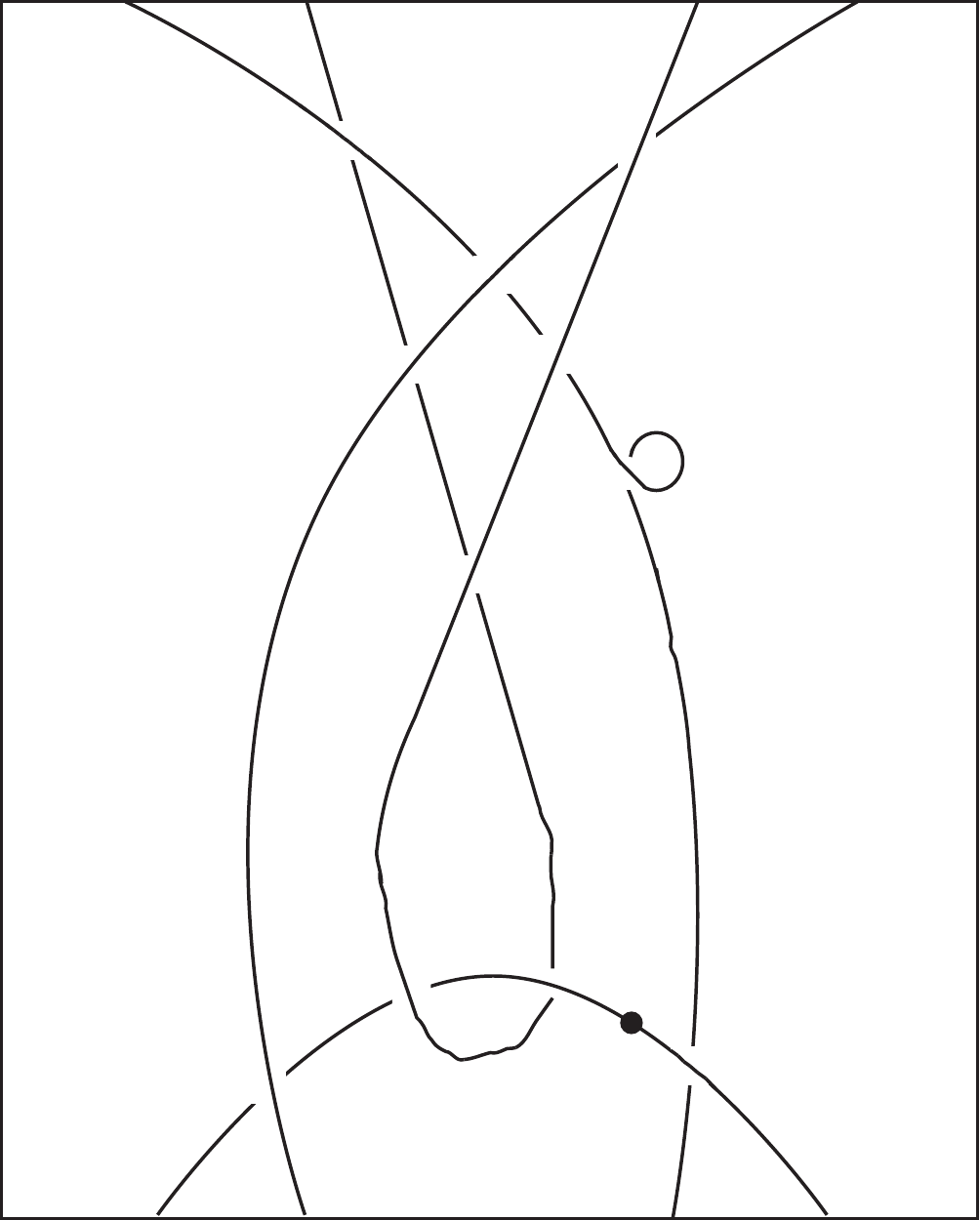}
\caption{Tangle for $S^3(6_2)$, with a choice for the position for the black dot.}
\label{sixtwo}
\end{figure}

So we obtain a map
\begin{equation}
\TV(6_2,0,0)=Z(T):\vs\rightarrow\vs.
\notag
\end{equation}
where $S$ is a 2-sphere with four uncolored points.
We use a trivalent graph basis in \cite{BHMV} for $\vs$ as in Figure \ref{basis}.
\begin{figure}[h]
\includegraphics[width=3in,height=4in]{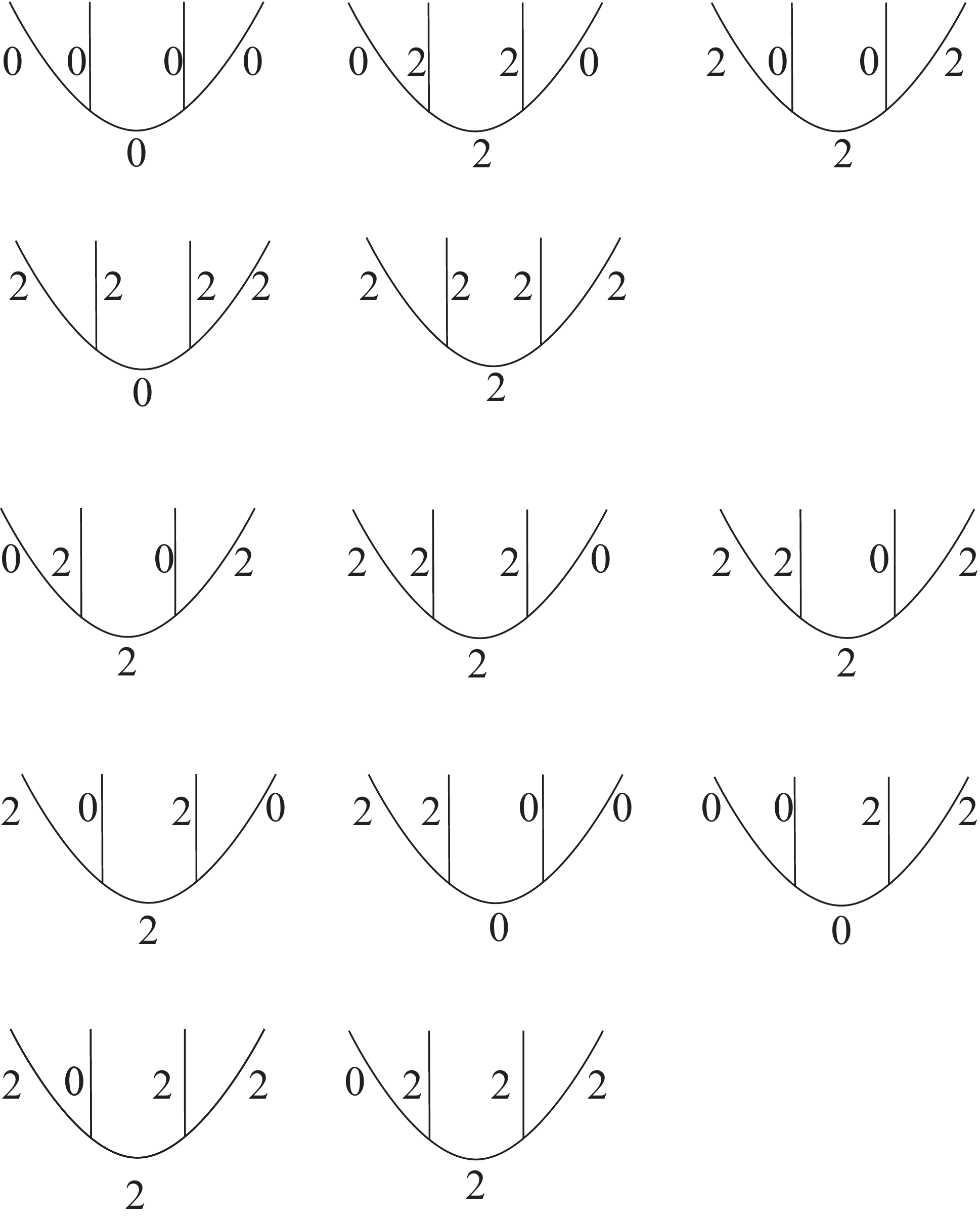}
\caption{A basis for $\vs$, where $S$ is a 2-sphere with four uncolored points.}
\label{basis}
\end{figure}

With respect to this basis,
we can obtain a $13\times 13$ matrix, which is in the strong shift equivalence class of the Turaev-Viro endomorphism.
However, by Proposition \ref{ker} 
 applied twice in succession,
it is enough to consider the minor 
given by the first  five rows and columns.  
We thus obtain a $5\times 5$ matrix:
\[
 \TV(6_2,0,0)= \kappa^{-1}\eta
 \begin{pmatrix}
  1 & 0 & \Delta^2 & 0 & 0 \\
  A^2 & 0 & \Delta(A^3+A)  & 0 & 0 \\
  0  & A^4  & 0 & \Delta & A^2\Theta  \\
  0 & \frac{A^8}{\Delta} & 0 & A^8 & \frac{A^8\Theta}{\Delta} \\
  0 & \frac{\Delta A^2}{\Theta} & 0 & \frac{\Delta^2}{\Theta} & \Delta(1-A^6+A^8+\frac{(A^4-A^6)\Delta\Tet}{\Theta^2})
 \end{pmatrix}.
\]

The Turaev-Viro polynomial (at $p=5$) is the characteristic polynomial of the above matrix, namely:
$$x^5+ \left(A^3+A-1\right) x^4+\left(-A^3-A^2-A\right)
   x^3+ \left(A^2+A+1\right) x^2+\left(A^3-A^2-1\right) x -A^3 .$$ 
We also note 
that
\begin{equation}
\Trace(\TV(6_2,0,0))=1-A-A^3.\notag
\end{equation} 
The left hand side of the first equation in Theorem \ref{TraceequalColoredJonesSum}, with $i=j=0$, and $K=6_2$ is by definition, the quantum invariant of $S^3(6_2)$. 
The right hand side is, by direct computation: 
$$\eta^2(J({6_2},0)\Delta_0+  J({6_2},2) \Delta_2= \eta^2( 1+\Delta(-A^{-2}+A^2-A^8-A^6))=1-A-A^3$$
Therefore, we verify a case of the first equation in Theorem \ref{TraceequalColoredJonesSum}.


\section*{Acknowledgements}
The second author  was partially supported by NSF-DMS-0905736. 
The first author was partially supported by a research assistantship funded by this same grant.
We would like to thank the referee for useful suggestions for improving the exposition.


\begin{thebibliography}{BHMV1}

\bibitem[AB]{AB} H. Achir, C. Blanchet, {\it On the computation of the Turaev-Viro module of a knot}, Journal Of Knot Theory and Its Ramifications. Vol 7, No. 7(1998), 843-856.

\bibitem[BHMV1]{BHMV1}
C. Blanchet, N. Habegger, G. Masbaum, P. Vogel, {\it Three-manifolds invariants derived from the Kauffman bracket}, Topology 31 (1992), 685--699

\bibitem[BHMV2]{BHMV}
C. Blanchet, N. Habegger, G. Masbaum, P. Vogel, {\it Topological quantum field theories from the Kauffman bracket}, Topology 34 (1995), 883--927.

\bibitem[BH]{BH} M.Boyle, D.Handelman, {\it  Algebraic shift equivalence and primitive matrices}, Trans. Amer. Math. Soc. 336 (1993), no. 1, 121--149.

\bibitem[CL]{CL} J. C. Cha and C. Livingston, KnotInfo: Table of Knot Invariants, \url{http://www.indiana.edu/~knotinfo}, January 31, 2012.


\bibitem[G1]{G1}
P. M. Gilmer, {\it Invariants for one-dimensional cohomology classes arising from TQFT}, Topology And Its Applications 75 (1997), 217-259.

\bibitem[G2]{G2}
P. M. Gilmer, {\it Turaev-Viro modules of satellite knots}, KNOTS '96 (Tokyo).

\bibitem[G3]{G3}
P. M. Gilmer, {\it Topological quantum field theory and strong shift equivalence}, Canad. Math. Bull. 42 (1999) 190-197. 

\bibitem[G4]{G4}
P.M. Gilmer, {\it Integrality for TQFTs}, Duke Math. J. 125 (2004) 389--413.  


\bibitem[GM1]{GM2}
P. M. Gilmer, G. Masbaum,
{\it  Integral Lattices in TQFT},  Annales Scientifiques de l'Ecole Normale Superieure, 40, (2007), 815--844 

\bibitem[GM2]{GM}
P. M. Gilmer, G. Masbaum, {\it Maslov index, lagrangians, mapping class groups and TQFT}, arXiv:0912.4706v3.
to appear in Forum Mathematicum

 \bibitem[KL]{KL}
L.Kauffman, S. Lins, {\it Temperley-Lieb recoupling theory and invariants of 3-manifolds}. Annals of Mathematics Studies, 134. Princeton University Press, Princeton, NJ, 1994.

\bibitem[K]{K}
R. C. Kirby, {\it A calculus for framed links}, Invent. Math. 45 (1978), 395-401.


\bibitem[L1]{L1} W.B.R., Lickorish, {\it An introduction to knot theory}, Graduate Texts in Mathematics, 175, Springer-Verlag, New York, 1997.

\bibitem[L2]{L2} W.B.R., Lickorish, {\it The skein method for three-manifold invariants}, J. Knot Theory Ramifications 2 (1993), no. 2, 171--194






\bibitem[LM]{LM} D. Lind, B.  Marcus, { \it Introduction to Symbolic Dynamics and Coding},  Cambridge University Press,  (1995)


\bibitem[MV]{MV}
G.Masbaum, P. Vogel, {\it 3-valent graphs and the Kauffman bracket}, Pacific J. Math. 164 (1994), no. 2, 361--381

\bibitem[O1]{O}
T. Ohtsuki, {\it Equivariant quantum invariants of the infinite cyclic covers of knot complements}, Intelligence of low dimensional topology 2006, 253-262, Ser. Knots Everything 40, World Sci. Publ., Hackensack, NJ, 2007.

\bibitem[O2]{O2}
T. Ohtsuki, 
{\it Invariants of knots derived from equivariant linking matrices of their surgery presentations.} Internat. J. Math. 20 (2009), no. 7, 883--913.


\bibitem[R1]{R2}
D. Rolfsen, {\it A surgical view of Alexander's polynomial.} Geometric topology (Proc. Conf., Park City, Utah, 1974), pp. 415--423. Lecture Notes in Math., Vol. 438, Springer, Berlin, 1975. 

\bibitem[R2]{R}
D. Rolfsen, {\it Knots and links}, Corrected reprint of the 1976 original. Mathematics Lecture Series, 7. Publish or Perish, Inc., Houston, TX, 1990.



\bibitem[T]{T}
V. Turaev, {\it Quantum invariants of knots and 3-manifolds}. Second revised edition. de Gruyter Studies in Mathematics, 18. Walter de Gruyter Co., Berlin, 2010.

\bibitem[TV]{TV}
V. Turaev, O. Viro, Lecture by Viro at Conference on Quantum Topology:
Kansas State University, Manhattan, Kansas, March 1993


\bibitem[V1]{Viro'}
 O. Viro, conversation with P. Gilmer, 
 January 2011


\bibitem[V2]{Viro}
 O. Viro, {\it  Elevating link homology theories and TQFT's via infinite cyclic coverings}, Lecture by Viro at Swiss Knots
 May 2011, \url{http://www.math.toronto.edu/~drorbn/SK11/}
 
 \bibitem[W]{Wall}
C. Wall, {\it Non-additivity of the signature}, Invent. Math. 7 (1969), 269-274.

\bibitem[Wa1]{Walker2}
K. Walker, {\it Topological quantum field theories and minimal genus surfaces}, Lecture, International Conference on Knots 90, Osaka, Japan, August 1990

\bibitem[Wa2]{Walker}
 K. Walker,
{\it On Witten's 3-manifold invariants}, Preliminary Version, 1991  \url{http://canyon23.net/math/} 

\bibitem[Wag]{wagoner} J. Wagoner, {\it Strong shift equivalence theory and the shift equivalence problem}. Bull. Amer. Math. Soc.  36 (1999), no. 3, 271--296. 



\end{thebibliography}
\end{document}